\newcommand{\lto}{\longrightarrow}
\newcommand{\eto}{\hookrightarrow} 
\newcommand{\tto}{\twoheadrightarrow}
\DeclareMathOperator{\Spec}{Spec}
\DeclareMathOperator{\Ker}{Ker}
\DeclareMathOperator{\Imm}{Im}
\theoremstyle{plain}
\newtheorem{theorem}{Theorem}[section]
\newtheorem{corollary}[theorem]{Corollary}
\newtheorem{lemma}[theorem]{Lemma}
\newtheorem{proposition}[theorem]{Proposition}
\theoremstyle{definition}
\newtheorem{definition}[theorem]{Definition}
\newtheorem{remark}[theorem]{Remark}
\newtheorem{block}[theorem]{}
\theoremstyle{remark}
\newtheorem*{notation}{Notation and conventions}
\title[Hurwitz moduli varieties]{Hurwitz moduli varieties parameterizing 
pointed covers of an algebraic curve with a fixed monodromy group}
\author[V.~Kanev]{Vassil Kanev}
\date{October 20, 2025}
\address{V. Kanev, Institute of Mathematics and Informatics, 
         Bulgarian Academy of Sciences, 8 Acad. Georgi Bonchev Str.
         1113 Sofia (BULGARIA).}
\email{kanev@math.bas.bg}
\thanks{This version of the article has been accepted for publication, after peer review,  
in Atti Accad. Naz. Lincei Rend. Lincei Mat. Appl., https://doi.org/10.4171/RLM/1061,\\
\copyright \, 2025 Accademia Nazionale dei Lincei.}
\begin{document}

\begin{abstract}
Given a smooth, projective curve $Y$, a point $y_0 \in Y$, a positive integer 
$n$, and a transitive subgroup $G$ of the symmetric group  $S_{d}$  we  study 
smooth, proper families, parameterized by algebraic varieties, of pointed \linebreak
degree $d$ covers of $(Y,y_0)$,  $(X,x_{0})\to  (Y,y_0)$,  branched  in  $n$ 
points of $Y\setminus y_{0}$, whose monodromy group equals $G$. We construct 
a Hurwitz space $H$, an algebraic variety whose points are in bijective correspondence with 
the equivalence classes of pointed covers of $(Y,y_0)$ of this type. 
We construct explicitly a family parameterized by $H$, whose fibers belong to the 
corresponding equivalence classes, and prove  that  it  is 
universal. We use classical tools  of  algebraic  topology  and  of  complex 
algebraic geometry.
\end{abstract}

\maketitle

\section{Introduction}\label{s1}
Fulton studied in \cite{Fu} smooth, proper families of covers of $\mathbb{P}^{1}$ of 
degree \linebreak
$d\geq 3$ simply branched in $n\geq 2d-2$  points,  parameterized  by 
schemes over $\mathbb{Z}$. In particular, over $\mathbb{C}$ he constructed a 
universal  family  of  such  covers,  parameterized  by  the  Hurwitz  space 
$H^{d,n}$, a variety whose points correspond bijectively to the  equivalence 
classes of covers of $\mathbb{P}^{1}$ of the above type.
\par
This paper is concerned with generalizing Fulton's 
results to degree $d$ covers of an arbitrary smooth, irreducible, projective 
curve $Y$, branched  in  $n$  points,  whose  monodromy  group  is  a  fixed 
transitive subgroup $G$ of $S_{d}$. When $Y=\mathbb{P}^{1}$ this problem was 
first studied by Fried in \cite{Fr} in connection with the  Inverse Galois Problem. 
An obstruction to the existence of a universal family  of  covers 
of $Y$ of this type is the presence of non trivial covering automorphisms of 
$f:X\to Y$. This happens if the centralizer of $G$ in $S_{d}$ is nontrivial. 
This obstacle may be avoided by working with pointed  covers  of  $(Y,y_0)$, 
where $y_{0}$ is a marked point of $Y$. Namely, we consider pairs 
$(f:X\to Y,x_0)$ where $X$ is a smooth, irreducible, projective  curve,  $f$ 
is a cover branched in a set  $D\subset Y$ of $n$ points, $y_{0}\notin D$, 
$x_{0}\in f^{-1}(y_{0})$, and there is a numeration of $f^{-1}(y_{0})$  such 
that the monodromy group  obtained  from  the  action  of 
$\pi_1(Y\setminus D,y_0)$ on $f^{-1}(y_0)$ equals $G$.
We construct an algebraic variety $H$, a Hurwitz space, whose points 
correspond bijectively to the equivalence classes of pointed covers of 
$(Y,y_0)$ of the above type.
\par
We study smooth, proper families of 
pointed covers of $(Y,y_0)$ parameterized by algebraic varieties.  Our  main 
result is the construction of a  universal  family  of  degree  $d$  pointed 
covers of $(Y,y_0)$, branched in $n$ points, with monodromy group $G\subset 
S_{d}$, parameterized by the Hurwitz space $H$. The construction is  explicit 
and uses the universal family of pointed Galois covers 
of $(Y,y_0)$ with Galois group isomorphic to $G$ constructed in \cite{K7}.
\par
The covers $f:X\to Y$ with restricted monodromy group and the related Galois 
covers $p:C\to Y$ yield polarized abelian varieties isogenous to abelian 
subvarieties of the Jacobian  variety  $J(X)$  \cite{Do, K5, Me, CLRR}.  
Given  a  smooth, 
proper family of such covers one obtains a morphism of its parameter variety 
to a certain moduli space of  polarized  abelian  varieties,  which  may  be 
studied by means of the associated variation  of  the  Hodge  structures  of 
weight one \cite{Gri1, Gri2, CMP}.  This approach to the 
moduli of polarized abelian varieties was used in \cite{K3, K4} where the 
unirationality of $\mathcal{A}_{3}(1,1,d)$ and $\mathcal{A}_{3}(1,d,d)$ with 
$2\leq d\leq 4$ was proved by means of families of simply ramified covers of 
elliptic curves branched in 6 points. The Prym--Tyurin morphism from a 
Hurwitz space to $\mathcal{A}_{6}$ was used in \cite{Al} to prove that every 
sufficiently general principally polarized abelian variety of dimension 6 is 
isomorphic to a Prym--Tyurin variety of a cover of $\mathbb{P}^{1}$ of degree 
27, branched in 24 points, with monodromy group the Weyl group 
$W(E_{6})\subset S_{27}$. 
\par
Emsalem studied in \cite{Em1} the moduli properties of the Hurwitz spaces  of 
pointed degree $d$ covers of $(Y,y_0)$,  where  $Y=\mathbb{P}^{1}$,  with  a 
fixed monodromy group $G\subset  S_{d}$.  He  worked  with  families,  
which are not proper, whose 
fibers are \'{e}tale covers of open subsets of $\mathbb{P}^{1}$, complements 
of $\leq n$ points, and whose parameter varieties are  \'{e}tale  covers  of 
open   subsets   of   $(\mathbb{P}^{1}\setminus   y_{0})_{\ast}^{(n)}$.   He 
constructed  universal  families  of  this  type   parameterized   by   the 
corresponding Hurwitz spaces \cite[Th\'{e}or\`{e}me~$3'$]{Em1}.
\par
Section~2 contains  generalities  on  degree  $d$ 
covers of a curve $Y$ with a fixed monodromy group $G\subset S_{d}$. We 
associate with every cover a data called monodromy invariant by which 
one parameterizes the equivalence classes of covers of $Y$ of the above type.
\par
Section~3 of the paper  contains generalities on smooth, proper families of degree  $d$ 
covers of a curve $Y$ with a fixed monodromy group $G\subset S_{d}$. We 
discuss their connection with the smooth, proper families of Galois covers 
of $Y$ with Galois group isomorphic to $G$.
\par
In  Section~4,  given  a  curve  $Y$,  a 
point $y_{0}\in Y$, a positive integer $n$, and a transitive subgroup 
$G\subset S_{d}$, we endow with a structure of an algebraic variety
the set $H$ which parameterizes the equivalence classes of 
pointed degree $d$ covers of $(Y,y_0)$, branched in $n$ points, with monodromy group $G$.
We construct explicitly in Theorem~\ref{3.19} a smooth, proper family 
of pointed covers of $(Y,y_0)$ of this type \linebreak
$(\phi:\mathcal{X}\to Y\times H,\xi:H\to \mathcal{X})$, parameterized by $H$,
whose fiber over every $h\in H$ belongs to the equivalence class of covers
corresponding to $h$.
We give the explicit form of $\phi$ locally 
at the ramification points in  analytic  coordinates  (Proposition~\ref{3.8} 
and Remark~\ref{3.20a}). We prove that the constructed family is universal in 
Theorem~\ref{3.25}. We mention that a key ingredient of the proof is the 
use of the criterion for extending morphisms from \cite{K2}.
We prove in Proposition~\ref{3.49} that, after performing an \'{e}tale base change, every 
smooth, proper family of pointed degree $d$ covers of $(Y,y_0)$ with monodromy group 
$G\subset S_{d}$ is isomorphic to the quotient by the isotropy 
subgroup $H=G(1)$, of a smooth, proper family of pointed 
$G$-covers of $(Y,y_0)$. We give in Theorem~\ref{3.34} a variant of the main 
theorem in which the pointed covers of $(Y,y_0)$ have local monodromies at 
the branch points in a set of marked conjugacy classes of $G$. 

\begin{notation}
We assume the base field is $\mathbb{C}$. Algebraic varieties are reduced,  separated, 
possibly reducible schemes of finite type, \emph{points} are closed  points. 
Fiber  products  and 
pullbacks are those defined in the category of schemes over $\mathbb{C}$. A 
cover $f:X\to Z$ of algebraic varieties is  a  finite,  surjective 
morphism. If $Y$ is a smooth, projective, irreducible curve we denote by
$Y^{(n)}$ the $n$-th symmetric power of $Y$ and by 
$Y^{(n)}_{\ast}$ the subset of $Y^{(n)}$ consisting of effective divisors of $Y$ of degree $n$ without 
multiple points. We identify such divisors with their support. If $\Omega \subset Y^{(n)}$,
then $\Omega_{\ast} = \Omega \cap Y^{(n)}_{\ast}$.
Given an algebraic variety $(X,\mathcal{O}_{X})$  
the  canonically 
associated complex space is denoted by $(X^{an},\mathcal{O}_{X^{an}})$ 
\cite{SGA1}. Its  topological  space  is  denoted  by  $|X^{an}|$. 
We assume that the homotopy of paths leaves the end points fixed 
\cite[Chapter~2, \S2]{Mas}. The set of paths homotopic to $\alpha$ is 
denoted by $[\alpha]$. 
The product of the paths $\alpha$ and $\beta$ is denoted  by  $\alpha  \cdot 
\beta$ and it equals the path $\gamma$, where $\gamma(t)=\alpha(2t)$  if 
$t\in [0,\frac{1}{2}]$, $\gamma(t)=\beta(2t-1)$ if  $t\in  [\frac{1}{2},1]$. 
Given a covering space $p:M\to N$ of the topological  space  $N$,  the 
map $p$ is called topological covering map. Lifting a path $\alpha$ of $N$ from
an initial point $z\in M$ the terminal point is denoted by $z\alpha$.
\end{notation}

\section{Parameterization of covers with a fixed monodromy group}\label{s2}
\begin{block}\label{2.0}
Throughout the paper $Y$ is a smooth, projective, irreducible curve of genus
$g\geq 0$, $n$ is a positive integer, $G$ is a finite  group,  which  can  be 
generated by $2g+n-1$ elements, $\Lambda$ is a finite set, $|\Lambda|=d$,  on 
which $G$ acts on the right faithfully and transitively, we choose an element
$\lambda_0\in \Lambda$  as  a 
marked element, $G(\lambda_0)=\{g\in G| \lambda_0g=\lambda_0\}$. 
We identify $G$ with its  image  in  the  symmetric  group  $S(\Lambda)\cong 
S_{d}$, where $S(\Lambda)$ acts on the right.
\end{block}
\begin{definition}\label{2.0a}
We call a finite, surjective morphism $f:X\to Y$ a \emph{cover}  of  $Y$  if 
$X$ is a smooth, projective, irreducible curve. We say that two covers 
$f:X\to Y$ and $f_{1}:X_{1}\to Y$ are equivalent if there exists a  
covering isomorphism $h:X\to X_{1}$, i.e. one such that $f_{1}\circ h=f$. 
A $G$-cover of $Y$ is a cover $p:C\to  Y$ as above, such that 
$G$ acts faithfully on  the  left  on  $C$  by 
automorphisms of $C$, $p$ is $G$-invariant and $\overline{p}:C/G \to Y$ is an isomorphism.
Two $G$-covers of $Y$, $p:C\to  Y$  and  $p_{1}:C_{1}\to  Y$  are  called 
$G$-equivalent  if  there  exists a covering isomorphism $h:C\to C_{1}$ 
which is $G$-equivariant.
\end{definition}
\begin{block}\label{2.0aa}
A necessary condition for two covers $f:X\to Y$ and  $f_{1}:X_{1}\to  Y$  to 
be equivalent is that they have the same branch locus $D$. 
Let $y_0\in Y\setminus D$. 
Let $Y^{an}$ be  the 
Riemann surface associated with  $Y$.  Then  $\pi_1(Y^{an}\setminus  D,y_0)$ 
acts on the right on $f^{-1}(y_0)$ by $x\cdot [\alpha] = x\alpha$ for 
$\forall x\in f^{-1}(y_0)$ and $\forall [\alpha] \in 
\pi_1(Y^{an}\setminus D,y_0)$, and similarly it acts on the right on 
$f_{1}^{-1}(y_0)$. Then $f$ is equivalent to $f_{1}$ if and  only  if  there 
exists a $\pi_1(Y^{an}\setminus D,y_0)$-equivariant bijection 
$\mu: f^{-1}(y_0)\to f_{1}^{-1}(y_0)$.  In  fact  let 
$X'=f^{-1}(Y\setminus D)$, $X_{1}'=f_{1}^{-1}(Y\setminus D)$.  Then  such  a 
bijection yields a covering biholomorphic map $X'^{an}\to X'^{an}_{1}$ over
$Y\setminus D$, which extends to a covering biholomorphic map 
$X^{an}\to X^{an}_{1}$ over $Y^{an}$. Furthermore this map equals  $h^{an}$, 
where $h:X\to X_{1}$ is a covering isomorphism over $Y$. Abusing notation we 
will write $\pi_1(Y\setminus D,y_0)$ instead of 
$\pi_1(Y^{an}\setminus D,y_0)$.
\end{block}
\begin{definition}\label{2.1}
Let $y_0\in Y$. Let $f:X\to Y$ be a cover such that $y_{0}$ does not belong 
to the branch locus of $f$ and let 
$x_{0}\in f^{-1}(y_0)$. We call $(f:X\to Y,x_{0})$ a 
\emph{pointed cover} of $(Y,y_0)$. Two pointed covers 
$(f:X\to Y,x_{0})$ and $(f_{1}:X_{1}\to Y,x'_{0})$ of $(Y,y_0)$  are  called 
equivalent if there exists an isomorphism $h:X\to X_{1}$ such that 
$f_{1}\circ h=f$ and $h(x_{0})=x'_{0}$.
A pointed $G$-cover of $(Y,y_0)$  is  a  pointed cover  
$(p:C\to Y,z_0)$ as above such that $p:C\to Y$ is a  $G$-cover. 
Two pointed $G$-covers of $(Y,y_0)$, $(p:C\to  Y,z_0)$  and  
$(p_{1}:C_{1}\to  Y,z'_0)$  are  called 
$G$-equivalent if 
there  exists  a  $G$-equivariant   isomorphism   $h:C\to C_{1}$ 
such that  $p=p_{1}\circ  h$ and $h(z_0)=z'_0$.
\end{definition}
\begin{block}\label{2.1a}
The isomorphism $h$ of Definition~\ref{2.1} is unique.
In fact, if $h_{1}$ is a second 
one,  then  $\varphi   =   h_{1}^{-1}\circ   h:X\to   X$   is   a   covering 
automorphism over $Y$, such that  $\varphi(x_{0})=x_{0}$.  Let  $D$  be  the 
branch locus of $f$ and let $X'=X\setminus f^{-1}(D)$. Then 
$f^{an}|_{X'^{an}}:X'^{an}\to Y^{an}\setminus D$ is a topological covering 
map, $X'^{an}$ is connected, so $\varphi^{an}|_{X'^{an}}$ is  the  identity  map, 
since $\varphi^{an}(x_{0})=x_{0}$. Therefore $h=h_{1}$.
\end{block}
\begin{block}\label{2.2}
Let $D=\{b_1,\ldots ,b_n\}\subset Y\setminus y_0$. Let 
$\overline{U}_{1},\ldots,\overline{U}_{n}$ be embedded closed disks in 
$Y\setminus y_0$ which are disjoint and such  that  $b_{i}\in U_{i}$  for  
$\forall  i$, where 
$U_{i}$ is the interior of $\overline{U}_{i}$.
For every $i=1,\ldots,n$ let us choose a path 
$\eta_{i}:I\to  Y\setminus \cup_{j=1}^{n} U_{j}$  
such  that  $\eta_{i}(0)=y_{0}$, 
$\eta_{i}(1)\in  \partial   \overline{U}_{i}$   and   let   
$\gamma_{i}:I\to Y\setminus D$ be the closed path which  starts  at  
$y_{0}$,  travels  along $\eta_{i}$,   then   makes   a   counterclockwise   
loop   along   
$\partial \overline{U}_{i}$ and returns back to  $y_{0}$  along  
$\eta_{i}^{-}$. Let $f:X\to Y$ be a $d$-sheeted cover of $Y$ unbranched
at $y_{0}$. Let $X'=X\setminus f^{-1}(D)$, $f'=f|_{X'}$. The branch locus 
of $f$ equals $D$ if and only if $f':X'\to Y\setminus D$ is  unramified  and 
the monodromy $m:\pi_1(Y\setminus D,y_0)\to S(f^{-1}(y_0))$ defined by
$x\cdot m([\alpha]) = x\alpha$ for $\forall x\in f^{-1}(y_0)$ and 
$\forall [\alpha]\in \pi_1(Y\setminus D,y_0)$ satisfies the condition
\begin{equation}\label{e2.2}
m([\gamma_{1}])\neq 1, \ldots , m([\gamma_{n}])\neq 1.
\end{equation}
Let $m:\pi_1(Y\setminus D,y_0)\to S_d$ be a homomorphism and let 
$G=\Imm(m)$.  Suppose  that  $m$  satisfies  Condition~\eqref{e2.2}.  If  one 
chooses in a different way $\overline{U}_{1},\ldots,\overline{U}_{n}$ 
and $\eta_{1},\ldots,\eta_{n}$ so as to satisfy the above conditions, then
for every  $i\in  [1,n]$  the  new  element  $[\gamma'_{i}]$  belongs  to  the 
conjugacy class of $[\gamma_{i}]$ in $\pi_1(Y\setminus D,y_{0})$, so 
$m([\gamma'_{i}])$ belongs to the conjugacy class of $m([\gamma_{i}])$ in
$G$. Therefore Condition~\eqref{e2.2} for $m$ is independent of  the  choice 
of $\overline{U}_{1},\ldots,\overline{U}_{n}$ and that of 
$\eta_{1},\ldots,\eta_{n}$.
\end{block}
\begin{lemma}\label{2.2b}
Let $G\subset S(\Lambda)$ be as in \S\ref{2.0}. Let 
$N_{S(\Lambda)}(G) = \{\sigma \in S(\Lambda)|\sigma G\sigma^{-1}=G\}$. 
Let $f:X\to Y$ and $f_{1}:X_{1}\to Y$ be two covers  with  the  same  branch 
locus $D$. Let $y_0\in Y\setminus D$. Suppose that there are bijections 
$\varepsilon : \Lambda \to f^{-1}(y_0)$, 
$\varepsilon_{1} : \Lambda \to f_{1}^{-1}(y_0)$ and epimorphisms 
$m:\pi_1(Y\setminus D,y_0)\to G$, $m_{1}:\pi_1(Y\setminus D,y_0)\to G$ such
that $\varepsilon (\lambda m([\alpha])) = \varepsilon (\lambda )\alpha$ and 
$\varepsilon_{1} (\lambda m_{1}([\alpha])) = \varepsilon_{1} (\lambda )\alpha$ 
for $\forall \lambda \in \Lambda$ and 
$\forall [\alpha] \in \pi_1(Y\setminus D,y_0)$. Then $f$ is equivalent to 
$f_{1}$ if and only if there exists a $\sigma  \in  N_{S(\Lambda)}(G)$  such 
that $m_{1}=\sigma m \sigma^{-1}$. The covering isomorphism 
$h:X\to X_{1}$ is unique if and only if the centralizer of $G$ in $S(\Lambda)$
is trivial, $Z_{S(\Lambda)}(G) = \{1\}$.
\end{lemma}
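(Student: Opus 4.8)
The plan is to reduce everything to the classical principle that two transitive permutation representations of $\pi_1(Y\setminus D,y_0)$ define equivalent covers exactly when they are conjugate, and that the covering automorphisms are governed by the centralizer. By \S\ref{2.0aa} the cover $f$ is equivalent to $f_1$ if and only if there is a $\pi_1(Y\setminus D,y_0)$-equivariant bijection $\mu:f^{-1}(y_0)\to f_1^{-1}(y_0)$. Writing $\pi=\pi_1(Y\setminus D,y_0)$, I would therefore transport every such $\mu$ to a permutation of $\Lambda$ through $\varepsilon$ and $\varepsilon_1$ and read off the group-theoretic condition it must satisfy.

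First I would set $\tau=\varepsilon_1^{-1}\circ\mu\circ\varepsilon\in S(\Lambda)$. Feeding the defining relations $\varepsilon(\lambda m([\alpha]))=\varepsilon(\lambda)\alpha$ and $\varepsilon_1(\lambda m_1([\alpha]))=\varepsilon_1(\lambda)\alpha$ into the equivariance $\mu(x\alpha)=\mu(x)\alpha$, the condition on $\mu$ becomes $\tau(\lambda m([\alpha]))=\tau(\lambda)m_1([\alpha])$ for all $\lambda\in\Lambda$ and $[\alpha]\in\pi$. Writing $\tau$ as right multiplication by an element of $S(\Lambda)$, which I denote $\sigma^{-1}$, and using associativity of the right action together with the faithfulness of the $S(\Lambda)$-action on $\Lambda$ to cancel $\lambda$, this identity turns into $m([\alpha])\sigma^{-1}=\sigma^{-1}m_1([\alpha])$, i.e. $m_1([\alpha])=\sigma\,m([\alpha])\,\sigma^{-1}$ for all $[\alpha]$. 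Since $m$ and $m_1$ are surjective onto $G$, the element $\sigma$ then satisfies $\sigma G\sigma^{-1}=G$, so $\sigma\in N_{S(\Lambda)}(G)$, giving the forward implication. For the converse I would run the computation backwards: from $\sigma\in N_{S(\Lambda)}(G)$ with $m_1=\sigma m\sigma^{-1}$, define $\tau$ as right multiplication by $\sigma^{-1}$ and set $\mu=\varepsilon_1\circ\tau\circ\varepsilon^{-1}$, which is then $\pi$-equivariant and produces the covering isomorphism $h$ by \S\ref{2.0aa}.

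For the uniqueness assertion I would use that every covering isomorphism $h:X\to X_1$ over $Y$ restricts to a $\pi$-equivariant bijection of the fibers over $y_0$, while conversely, by \S\ref{2.0aa}, each such bijection extends to a unique $h$; hence covering isomorphisms correspond bijectively to solutions $\sigma$ of $m_1=\sigma m\sigma^{-1}$. Assuming $f$ is equivalent to $f_1$, fix one solution $\sigma_0$. For any other solution $\sigma$ the equality $\sigma m([\alpha])\sigma^{-1}=\sigma_0 m([\alpha])\sigma_0^{-1}$ forces $\sigma_0^{-1}\sigma$ to commute with $m([\alpha])$ for every $[\alpha]$; by surjectivity of $m$ this means $\sigma_0^{-1}\sigma\in Z_{S(\Lambda)}(G)$, and conversely $\sigma_0 z$ is a solution for every $z\in Z_{S(\Lambda)}(G)$. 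Thus the solution set is the coset $\sigma_0 Z_{S(\Lambda)}(G)$, so there is exactly one covering isomorphism precisely when $Z_{S(\Lambda)}(G)=\{1\}$.

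I expect the only genuine difficulty to be bookkeeping: keeping the right actions of $G$ and of $S(\Lambda)$ consistent, pinning down the direction of the conjugation so that it matches $m_1=\sigma m\sigma^{-1}$ rather than its inverse, and justifying the cancellation of $\lambda$ by faithfulness of the permutation action. Beyond this careful tracking of conventions there is no deep obstacle, because the geometric content—that equivalence of covers is the same as an equivariant bijection of fibers, and that such a bijection extends to a covering map—is already furnished by \S\ref{2.0aa}.
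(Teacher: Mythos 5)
Your proposal is correct and follows essentially the same route as the paper: both reduce equivalence to the existence of a $\pi_1(Y\setminus D,y_0)$-equivariant bijection of fibers via \S\ref{2.0aa}, transport it to a permutation of $\Lambda$ written as right multiplication by $\sigma^{-1}$, and read off $m_1=\sigma m\sigma^{-1}$ with $\sigma\in N_{S(\Lambda)}(G)$. Your uniqueness argument, parameterizing the solution set as the coset $\sigma_0 Z_{S(\Lambda)}(G)$, is just a repackaging of the paper's reduction to covering automorphisms (the case $f_1=f$, $m_1=m$), which are shown to correspond to elements of $Z_{S(\Lambda)}(G)$.
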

\begin{proof}
The covers $f:X\to Y$ and $f_{1}:X_{1}\to Y$ are equivalent if and  only  if 
there exists a $\pi_1(Y\setminus D,y_0)-$equivariant bijection 
$\mu : f^{-1}(y_0)\to f_{1}^{-1}(y_0)$ (cf. \S\ref{2.0aa}).  Let  $\mu$  be 
such a bijection. Let $s:\Lambda \to \Lambda$ be the  bijection,  which  makes 
the following diagram commutative:
\begin{equation}\label{e2.5}
\xymatrix{
\Lambda\ar[r]^-{\varepsilon}\ar[d]_-{s}&f^{-1}(y_0)\ar[d]^-{\mu}\\
\Lambda\ar[r]^-{\varepsilon_1}&f_{1}^{-1}(y_0).
}
\end{equation}
Let $\sigma \in S(\Lambda)$ satisfy $s(\lambda)=\lambda \sigma^{-1}$ for 
$\forall \lambda \in \Lambda$. Let $\lambda \in \Lambda$ and let 
$x=\varepsilon (\lambda)\in f^{-1}(y_0)$. For 
$\forall [\alpha]\in \pi_1(Y\setminus D,y_0)$ one has 
$\mu(x\alpha)=\mu(x)\alpha$ and
\[
\begin{split}
&\mu(x\alpha) = \mu(\varepsilon (\lambda)\alpha)
= \mu(\varepsilon(\lambda m([\alpha]))) = 
\varepsilon_{1}(\lambda m([\alpha])\sigma^{-1})\\
&\mu(x)\alpha = \mu(\varepsilon (\lambda))\alpha
= \varepsilon_{1}(\lambda \sigma^{-1})\alpha = 
\varepsilon_{1}(\lambda \sigma^{-1} m_{1}([\alpha])).
\end{split}
\]
This shows that $m_{1}([\alpha])=\sigma m([\alpha])\sigma^{-1}$ for 
$\forall [\alpha] \in \pi_1(Y\setminus D,y_0)$. In particular 
$\sigma \in N_{S(\Lambda)}(G)$, since $m$ and $m_{1}$ are  epimorphisms.  We 
conclude that $m_{1}=\sigma m \sigma^{-1}$. Viceversa, suppose that 
$m_{1}=\sigma m \sigma^{-1}$ for some $\sigma \in N_{S(\Lambda)}(G)$. Let 
$s:\Lambda \to \Lambda$ be the bijection 
$\varepsilon(\lambda)=\lambda \sigma^{-1}$ and let 
$\mu:f^{-1}(y_0)\to  f_{1}^{-1}(y_0)$  be  the  bijection  which  makes  the 
diagram \eqref{e2.5} commutative. Then $\mu(x\alpha)=\mu(x)\alpha$ for 
$\forall x\in f^{-1}(y_0)$ and
$\forall [\alpha]\in \pi_1(Y\setminus D,y_0)$, therefore 
$f:X\to Y$ is equivalent to $f_{1}:X\to Y$.
\par
Suppose $f$ is equivalent to $f_{1}$. The covering isomorphism 
$h:X\to X_{1}$
is unique if and only if every covering automorphism 
$\varphi :X\to X$ equals the identity.  Consider  the  diagram  \eqref{e2.5} 
with $f_{1}=f$, $\varepsilon_{1}=\varepsilon$ and $m_{1}=m$. Let 
$\sigma \in S(\Lambda)$ satisfy 
$\mu(\varepsilon(\lambda))=\varepsilon (\lambda \sigma^{-1})$. Then 
$\mu(x\alpha)=\mu(x)\alpha$ for $\forall x\in f^{-1}(y_0)$ and 
$\forall [\alpha]\in \pi_1(Y\setminus D,y_0)$ if and only if 
$\sigma m([\alpha])\sigma^{-1} = m([\alpha])$ for 
$\forall [\alpha]\in \pi_1(Y\setminus D,y_0)$. Since $\Imm(m)=G$  this  holds 
if and only if $\sigma \in Z_{S(\Lambda)}(G)$. Therefore $\mu = id$, 
equivalently
$\varphi = id_X$,  if  and 
only if $Z_{S(\Lambda)}(G)=\{1\}$.
\end{proof}
\begin{proposition}\label{2.3}
In the setup of \S\ref{2.0} let $y_0\in Y$, let
$D=\{b_1,\ldots,b_n\}\subset Y\setminus y_0$ and let 
$(f:X\to Y,x_0)$ be a pointed cover of $(Y,y_0)$ of degree $d$
branched in $D$. The following conditions are equivalent:
\begin{enumerate}
\item
There  is  a  bijection  $\varepsilon  :\Lambda  \to  f^{-1}(y_0)$  and
$m:\pi_1(Y\setminus D,y_0)\to G$,   
an epimorphism,  such that
\begin{equation}\label{e2.3}
\begin{split}
\varepsilon (\lambda m([\alpha])) &= \varepsilon (\lambda )\alpha
\quad \text{for}\quad \forall \lambda \in \Lambda,
\forall [\alpha]\in \pi_1(Y\setminus D,y_0)\\
\varepsilon(\lambda_{0}) &=x_{0}.
\end{split}
\end{equation}
\item
Let $X'=X\setminus f^{-1}(D)$, $f'=f|_{X'}$ and let 
$\Gamma_{x_0}=f'_{\ast}\pi_1(X',x_{0})$. There is an epimorphism 
$m:\pi_1(Y\setminus D,y_0)\to G$ such that 
$\Gamma_{x_{0}}=m^{-1}(G(\lambda_0))$ where $G(\lambda_0)$ is the 
isotropy group of $\lambda_{0}$.
\end{enumerate}
Let $N(\lambda_0)=\{\sigma \in N_{S(\Lambda)}(G)|\lambda_{0}\sigma 
=\lambda_{0}\}$. Let $(f:X\to Y,x_0)$ and $(f_{1}:X_{1}\to Y,x'_0)$  be  two 
pointed covers of $(Y,y_0)$ branched in $D$ which satisfy Condition~(i) with 
$(\varepsilon,m)$  and  $(\varepsilon_{1},m_{1})$  respectively.  They   are 
equivalent if and only if there exists a $\sigma \in N(\lambda_0)$ such that 
$m_{1}=\sigma m \sigma^{-1}$. Furthermore  $\sigma  \in  N(\lambda_0)$  with 
this property is unique.
\end{proposition}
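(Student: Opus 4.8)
The plan is to establish the equivalence of (i) and (ii) first, then the equivalence criterion, and finally the uniqueness of $\sigma$.

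For (i) $\Leftrightarrow$ (ii) I would invoke the standard correspondence between connected covers and subgroups of the fundamental group (the monodromy action is set up in \S\ref{2.2}): since $X'$ is connected, the right monodromy action of $\pi_1(Y\setminus D,y_0)$ on the fiber $f^{-1}(y_0)$ is transitive, and the stabilizer of $x_0$ is precisely $\Gamma_{x_0}=f'_{\ast}\pi_1(X',x_0)$. To get (i)$\Rightarrow$(ii), take the $m$ of (i); using $\varepsilon(\lambda_0)=x_0$ and the equivariance $\varepsilon(\lambda m([\alpha]))=\varepsilon(\lambda)\alpha$, a loop $[\alpha]$ fixes $x_0$ if and only if $\varepsilon(\lambda_0 m([\alpha]))=\varepsilon(\lambda_0)$, i.e.\ (as $\varepsilon$ is injective) $\lambda_0 m([\alpha])=\lambda_0$, i.e.\ $m([\alpha])\in G(\lambda_0)$; hence $\Gamma_{x_0}=m^{-1}(G(\lambda_0))$. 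For (ii)$\Rightarrow$(i), I would use the transitive right action of $G$ on $\Lambda$ to write every $\lambda$ as $\lambda_0 g$, and define $\varepsilon(\lambda_0 g)=x_0\alpha$ for any $[\alpha]$ with $m([\alpha])=g$ (such $[\alpha]$ exists as $m$ is onto). The identity $\Gamma_{x_0}=m^{-1}(G(\lambda_0))$ makes this independent of the choices and injective, the transitivity of the monodromy makes it onto, and $\varepsilon(\lambda_0)=x_0$ together with the required equivariance then follow directly from $x_0(\alpha\delta)=(x_0\alpha)\delta$.

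For the equivalence criterion I would reduce to Lemma~\ref{2.2b}. Forgetting the marked points, $f$ and $f_1$ are covers with the same branch locus $D$, so by Lemma~\ref{2.2b} they are equivalent if and only if there is $\sigma\in N_{S(\Lambda)}(G)$ with $m_1=\sigma m\sigma^{-1}$, the covering isomorphism $h\colon X\to X_1$ being the one whose restriction to the fiber is the bijection $\mu$ fixed by the commutative square $\mu\circ\varepsilon=\varepsilon_1\circ s$ with $s(\lambda)=\lambda\sigma^{-1}$ (cf.\ \S\ref{2.0aa} and the construction in the proof of Lemma~\ref{2.2b}). In particular $h(x_0)=\mu(x_0)=\mu(\varepsilon(\lambda_0))=\varepsilon_1(\lambda_0\sigma^{-1})$, while $x_0'=\varepsilon_1(\lambda_0)$. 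Thus the pointed condition $h(x_0)=x_0'$ holds exactly when $\lambda_0\sigma^{-1}=\lambda_0$, equivalently $\lambda_0\sigma=\lambda_0$; combined with $\sigma\in N_{S(\Lambda)}(G)$ this says $\sigma\in N(\lambda_0)$. Running the argument in both directions gives the stated criterion.

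For uniqueness, suppose $\sigma,\sigma'\in N(\lambda_0)$ both satisfy $m_1=\sigma m\sigma^{-1}=\sigma'm\sigma'^{-1}$ and set $\tau=\sigma'^{-1}\sigma$. Then $\tau m([\alpha])\tau^{-1}=m([\alpha])$ for all $[\alpha]$, so $\tau\in Z_{S(\Lambda)}(G)$ because $\Imm(m)=G$, and $\lambda_0\tau=\lambda_0$ since both $\sigma,\sigma'$ fix $\lambda_0$. Writing an arbitrary $\lambda=\lambda_0 g$ and using that $\tau$ commutes with $g$ together with $\lambda_0\tau=\lambda_0$ gives $\lambda\tau=\lambda_0(g\tau)=\lambda_0(\tau g)=(\lambda_0\tau)g=\lambda_0 g=\lambda$, so $\tau$ acts trivially on $\Lambda$ and hence $\tau=1$ by faithfulness; thus $\sigma=\sigma'$.

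I expect the main obstacle to be the careful bookkeeping of base points through the unpointed theory: one must verify that the covering isomorphism produced by \S\ref{2.0aa} and Lemma~\ref{2.2b} restricts on the fiber over $y_0$ to exactly the combinatorial bijection $\mu$, so that the marked-point condition becomes the transparent fixed-point condition $\lambda_0\sigma=\lambda_0$. The uniqueness rests on the fact that $Z_{S(\Lambda)}(G)\cap N(\lambda_0)$ acts trivially on $\Lambda$, which is precisely where the faithful transitivity of the $G$-action is indispensable.
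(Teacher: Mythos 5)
Your proposal is correct and follows essentially the same route as the paper: (i)$\Leftrightarrow$(ii) via the standard identification of the fiber with cosets of $\Gamma_{x_0}$ (the paper phrases this through the equivariant bijections $_{\Gamma_{x_{0}}}\backslash \pi_1(Y\setminus D,y_0) \cong f^{-1}(y_0)$ and $_{G(\lambda_0)}\backslash G \cong \Lambda$, you construct $\varepsilon$ by hand, but it is the same mechanism), the equivalence criterion by reduction to Lemma~\ref{2.2b} with the marked-point condition becoming $\lambda_0\sigma^{-1}=\lambda_0$, and uniqueness by the centralizer computation using $\Lambda=\lambda_0 G$. The only cosmetic difference is that the paper proves uniqueness after reducing to the case $m_1=m$, whereas you work with $\tau=\sigma'^{-1}\sigma$ directly; the computation is identical.
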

\begin{proof}
The conditions (i) and (ii) are equivalent since the map 
$\pi_1(Y\setminus D,y_0)\to f^{-1}(y_0)$ defined by 
$[\alpha]\mapsto x_{0}\alpha$ induces a 
$\pi_1(Y\setminus D,y_0)$-equivariant bijection between the set
of right cosets
$_{\Gamma_{x_{0}}}\backslash \pi_1(Y\setminus D,y_0)$ and 
$f^{-1}(y_0)$, $m$ induces a $\pi_1(Y\setminus D,y_0)$-equivariant 
bijection between 
$_{\Gamma_{x_{0}}}\backslash \pi_1(Y\setminus D,y_0)$ and 
$_{G(\lambda_0)}\backslash G \cong \Lambda$. Under these bijections 
$\lambda_{0}\mapsto G(\lambda_0)\mapsto \Gamma_{\lambda_{0}}\mapsto x_0$.
\par
The pointed  covers  $(f:X\to  Y,x_0)$  and  $(f_{1}:X_{1}\to  Y,x'_0)$  are 
equivalent   if   and   only   if   there   exists    a 
$\pi_1(Y\setminus D,y_0)$-equivariant bijection 
$\mu:f^{-1}(y_0)\to f_{1}^{-1}(y_0)$ such that $\mu(x_0)=x'_0$. By 
Lemma~\ref{2.2b} this is equivalent to the existence of a
$\sigma \in N_{S(\Lambda)}(G)$ such that 
$m_{1}=\sigma m \sigma^{-1}$ and $\lambda_{0}\sigma^{-1}=\lambda_{0}$,
 i.e. $\sigma \in N(\lambda_0)$. In order to prove the uniqueness of 
such a $\sigma$ it suffices to consider the case $m_{1}=m$ and prove that if 
$\sigma \in N(\lambda_0)$ satisfies $m=\sigma m \sigma^{-1}$, then 
$\sigma = id_{\Lambda}$. In fact, let $g\in G = \Imm(m)$, then 
$g=\sigma g \sigma^{-1}$ and 
$\lambda_{0}g=\lambda_{0}(\sigma g\sigma^{-1}) = 
(\lambda_{0}g)\sigma^{-1}$. Since $\Lambda =\lambda_{0}G$ this shows that 
$\lambda \sigma =\lambda$ for $\forall \lambda \in \Lambda$. 
\end{proof}
\begin{block}\label{2.5b}
The set 
\begin{equation}\label{e2.5b}
\begin{split}
H^G_n(Y,y_0) = \{&(D,m)| D\in (Y\setminus y_0)^{(n)}_{\ast}, 
m:\pi_1(Y\setminus D,y_0)\to G \\
& \text{is an epimorphism which satisfies Condition~\eqref{e2.2}} 
\}
\end{split}
\end{equation}
is in bijective correspondence with the set of  $G$-equivalence  classes  of 
pointed $G$-covers of $(Y,y_0)$ branched in $n$ points $\{[p:C\to Y,z_{0}]\}$ 
(cf. \cite{K1,K7}). Given a pointed $G$-cover  
$(p:C\to  Y,z_0)$ of $(Y,y_0)$,  
 $p(z_{0})=y_{0}$, the associated pair $(D,m)$, its monodromy invariant,  
consists  of  the  branch  locus  $D$ of $p$ and of the 
homomorphism \linebreak
$m:\pi_1(Y\setminus D,y_0)\to G$ defined as follows. Let 
$C'=p^{-1}(Y\setminus D)$. For every loop $\alpha :I\to Y\setminus D$  based 
at $y_{0}$ let $z_{0}\alpha$ be the end point of the lifting of $\alpha$  in 
$C'$ with initial point $z_{0}$. Let $z_{0}\alpha = gz_{0}, g\in G$.  Then 
$m([\alpha])=g$.
\end{block}
\begin{definition}\label{2.6}
In the setup of \S\ref{2.0} let 
$N(\lambda_0)=\{\sigma   \in    N_{S(\Lambda)}(G)|    \lambda_{0}\sigma    = 
\lambda_{0}\}$. Consider the left action of $N(\lambda_0)$ on 
$H^G_n(Y,y_0)$ defined by $\sigma \ast (D,m) = (D,\sigma m \sigma^{-1})$.  We 
denote by 
$H^{\Lambda,G}_{n,\lambda_0}(Y,y_0)$ the quotient set
\begin{equation*}
H^{\Lambda,G}_{n,\lambda_0}(Y,y_0) = H^G_n(Y,y_0)/N(\lambda_0).
\end{equation*}
\end{definition}
\begin{definition}\label{2.6a}
Let $y_0\in  Y$.  A  pointed  cover  $(f:X\to  Y,x_0)$  of  $(Y,y_0)$  which 
satisfies the conditions of Proposition~\ref{2.3}(i) for some 
$D\in (Y\setminus y_0)^{(n)}_{\ast}$ and a pair $(\varepsilon,m)$ is called 
\emph{pointed}  $(\Lambda,G)$-\emph{cover}  of  $(Y,y_0)$  branched  in  $n$ 
points. The pair
\begin{equation*}
(D,m^{N(\lambda_0)}) = (D,\{\sigma m \sigma^{-1}\}_{\sigma \in N(\lambda_0)})
\in H^{\Lambda,G}_{n,\lambda_0}(Y,y_0)
\end{equation*}
is called its \emph{monodromy invariant}.
\end{definition}
\begin{lemma}\label{2.6b}
Let $(p:C\to Y,z_0)$ be a pointed $G$-cover of 
$(Y,y_0)$ with monodromy invariant $(D,m)$ (cf. \S\ref{2.5b}).
Consider the 
action of $G$ on $\Lambda \times C$ defined by 
$g(\lambda,z)=(\lambda g^{-1},gz)$. Let
$X = (\Lambda \times C)/G := \Lambda \times^{G}C$. 
Let $\pi : \Lambda \times C \to X$ be the quotient morphism, let 
$f:X\to Y$ be the morphism defined by $f(\pi(\lambda,z))=p(z)$ and let 
$x_{0}=\pi(\lambda_{0},z_{0})$.
Then $(f:X\to Y,x_0)$ is a 
pointed $(\Lambda,G)$-cover of $(Y,y_0)$ with monodromy invariant 
$(D,m^{N(\lambda_0)})$. 
\end{lemma}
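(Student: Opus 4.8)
The plan is to first pin down the scheme structure of $X=\Lambda\times^{G}C$ and then to run a path-lifting computation that verifies Condition~(i) of Proposition~\ref{2.3} for the explicit pair $(\varepsilon,m)$, where $m$ is the monodromy of the given $G$-cover. For the structure, I would consider the morphism $q\colon C\to X$, $z\mapsto\pi(\lambda_0,z)$, obtained by restricting $\pi$ to $\{\lambda_0\}\times C$. It is invariant under the stabilizer $G(\lambda_0)$, and I claim it induces an isomorphism $C/G(\lambda_0)\cong X$. Surjectivity follows from transitivity of $G$ on $\Lambda$: given $(\lambda,z)$ choose $g$ with $\lambda g^{-1}=\lambda_0$, so that $\pi(\lambda,z)=\pi(\lambda_0,gz)=q(gz)$. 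That $q$ identifies exactly the $G(\lambda_0)$-orbits follows from the definition of the action, since $\pi(\lambda_0,z)=\pi(\lambda_0,z')$ forces $z'=gz$ with $\lambda_0 g^{-1}=\lambda_0$, i.e. $g\in G(\lambda_0)$. As $C$ is a smooth projective irreducible curve and $G(\lambda_0)$ a finite group of automorphisms, $C/G(\lambda_0)$ is again such a curve, and under the identification $f$ becomes the natural finite surjective morphism $C/G(\lambda_0)\to C/G\cong Y$ of degree $[G:G(\lambda_0)]=|\Lambda|=d$. It is unramified over $Y\setminus D$ because $p$ is, so $y_0\notin D$ is not a branch point; moreover $f(x_0)=p(z_0)=y_0$, so $(f\colon X\to Y,x_0)$ is a pointed cover of $(Y,y_0)$.

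Next I would describe the fiber $f^{-1}(y_0)$ and build $\varepsilon$. Since $y_0\notin D$, the group $G$ acts freely and transitively on $p^{-1}(y_0)=Gz_0$. From $g\cdot(\lambda g,z_0)=(\lambda,gz_0)$ one obtains $\pi(\lambda,gz_0)=\pi(\lambda g,z_0)$, whence every point of $f^{-1}(y_0)$ has the form $\pi(\lambda,z_0)$, and $\pi(\lambda,z_0)=\pi(\lambda',z_0)$ holds iff $\lambda=\lambda'$. Thus $\varepsilon\colon\Lambda\to f^{-1}(y_0)$, $\varepsilon(\lambda)=\pi(\lambda,z_0)$, is a bijection with $\varepsilon(\lambda_0)=x_0$.

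The essential step is the monodromy relation. Over $Y\setminus D$ the action of $G$ on $C'=p^{-1}(Y\setminus D)$ is free, so $\pi\colon\Lambda\times C'\to X'$ is a topological covering map, and $f'\circ\pi$ is the map $(\lambda,z)\mapsto p(z)$. To lift a loop $\alpha$ based at $y_0$ to $X'$ from $\varepsilon(\lambda)$, I lift it in the component $\{\lambda\}\times C'$, namely as $t\mapsto(\lambda,\tilde\alpha(t))$ with $\tilde\alpha$ the lift of $\alpha$ to $C'$ from $z_0$, and then project by $\pi$; by uniqueness of path lifting this is the lift in $X'$. Its endpoint is $\pi(\lambda,z_0\alpha)$, and by the definition of $m$ in \S\ref{2.5b} we have $z_0\alpha=m([\alpha])z_0$, so the endpoint equals $\pi(\lambda,m([\alpha])z_0)=\pi(\lambda\,m([\alpha]),z_0)=\varepsilon(\lambda\,m([\alpha]))$. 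Hence $\varepsilon(\lambda)\alpha=\varepsilon(\lambda\,m([\alpha]))$ for all $\lambda$ and all $[\alpha]$, which is precisely \eqref{e2.3}.

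To conclude, $m$ is an epimorphism onto $G$ by hypothesis and $(D,m)\in H^G_n(Y,y_0)$, so $m$ satisfies~\eqref{e2.2}; since $G\hookrightarrow S(\Lambda)$ is faithful, $m([\gamma_i])\neq1$ acts nontrivially on $\Lambda$, and the branch locus of $f$ is exactly $D$. Therefore the pair $(\varepsilon,m)$ realizes Condition~(i) of Proposition~\ref{2.3} with $\varepsilon(\lambda_0)=x_0$, and by Definition~\ref{2.6a} the cover $(f\colon X\to Y,x_0)$ is a pointed $(\Lambda,G)$-cover of $(Y,y_0)$ with monodromy invariant $(D,m^{N(\lambda_0)})$. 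I expect the main obstacle to be the first paragraph: making rigorous the scheme-theoretic identification $\Lambda\times^{G}C\cong C/G(\lambda_0)$, so that $X$ is genuinely a smooth projective irreducible curve and $f$ a cover; the subsequent monodromy computation is routine path lifting, provided the left $G$-action and the right monodromy action are tracked consistently.
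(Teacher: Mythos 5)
Your proof is correct and follows essentially the same route as the paper's: identify $\Lambda\times^{G}C$ with $C/G(\lambda_0)$ so that $X$ is a smooth, projective, irreducible curve, define $\varepsilon(\lambda)=\pi(\lambda,z_0)$, verify $\varepsilon(\lambda)\alpha=\varepsilon(\lambda\, m([\alpha]))$ by lifting $\alpha$ in $\{\lambda\}\times C'$ and using $\pi(\lambda,gz_0)=\pi(\lambda g,z_0)$, and then use Condition~\eqref{e2.2} to conclude the branch locus is exactly $D$. The one step you flag as the main obstacle is handled in the paper not by a set-theoretic fiber analysis but by the universal property of quotients: the $G(\lambda_0)$-invariant morphism $z\mapsto\pi(\lambda_0,z)$ and the $G$-invariant morphism $(\lambda,z)\mapsto\rho(a_{\lambda}z)$ (with coset representatives $a_\lambda$) induce mutually inverse morphisms between $C/G(\lambda_0)$ and $\Lambda\times^{G}C$, which gives the scheme-theoretic identification directly.
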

\begin{proof}
Let $G(\lambda_{0})=H$. Let $\rho:C\to C/H$ be the quotient morphism. One has 
$\Lambda\times^{G}C\cong C/H$. In fact, let us choose for every 
$\lambda \in \Lambda$ an element $a_{\lambda} \in G$ such that 
$\lambda = \lambda_0a_{\lambda}$, let $a_{\lambda_0}=1$. The morphisms 
$C\to  \Lambda  \times^{G}C$  and  $\Lambda  \times  C  \to   C/H$,   defined by 
\begin{equation}\label{e2.7a}
z \mapsto \pi(\lambda_0,z) \quad \text{and}\quad (\lambda,z)\mapsto 
\rho(a_{\lambda}z),
\end{equation}
are respectively $H$-invariant and $G$-invariant. The induced morphisms 
$G/H \to \Lambda \times^{G}C$ and $\Lambda \times^{G}C\to G/H$  are  inverse 
to each other. The curve $C$ is projective, smooth and irreducible, so 
$X \cong C/H$ has the same properties.
Consider the map
\begin{equation}\label{e2.8}
\varepsilon :\Lambda \to f^{-1}(y_0)\quad \text{defined by}\quad 
\varepsilon(\lambda)=\pi(\lambda,z_{0}).
\end{equation}
It is bijective since $G$ acts on $p^{-1}(y_0)$ transitively  and freely. 
One has $\varepsilon(\lambda_{0})=x_{0}$. Let 
$\varepsilon(\lambda)=\pi(\lambda,z_{0})$   be   an   arbitrary   point   of 
$f^{-1}(y_0)$. Let $\alpha :I\to Y\setminus D$ be a loop  based  at  $y_{0}$ 
and let $g=m([\alpha])$. Let $C'=p^{-1}(Y\setminus D)$. Lifting $\alpha$  in 
$\Lambda \times C$ with initial point $(\lambda,z_{0})$ the end point is 
$(\lambda,z_{0}\alpha)=(\lambda,gz_{0})$. 
Applying $\pi$ one obtains
\begin{equation*}
\varepsilon(\lambda)\alpha = \pi(\lambda,gz_{0}) =
\pi(\lambda g,z_{0}) = \varepsilon(\lambda m([\alpha])). 
\end{equation*}
Let $y\in Y\setminus D$ and let $z\in p^{-1}(y)$. The map 
$\Lambda  \to  f^{-1}(y)$  defined  by  $\lambda  \mapsto  \pi(\lambda,z)$  is 
bijective, so $f$ is unbranched at every $y\in Y\setminus D$. The  monodromy 
homomorphism $m=m_{z_{0}}$ satisfies Condition~\eqref{e2.2}, so every 
$[\gamma_{i}]$ acts on the fiber $f^{-1}(y_0)$ as a nontrivial  permutation, 
$i=1,\ldots,n$. Therefore the branch locus of  $f:X\to  Y$  equals  $D$.
We see that $(f:X\to Y,x_0)$ satisfies the conditions of 
Proposition~\ref{2.3}(i) with $(\varepsilon,m)$, so its monodromy invariant is
$(D,m^{N(\lambda_0)})$.
\end{proof}
\begin{proposition}\label{2.7}
The map $[f:X\to Y,x_0]\mapsto (D,m^{N(\lambda_0)})$ establishes a  bijective 
correspondence between the set of equivalence classes of 
pointed $(\Lambda,G)$-covers of $(Y,y_0)$ branched in $n$ points and the set 
$H^{\Lambda,G}_{n,\lambda_0}(Y,y_0)$.
\end{proposition}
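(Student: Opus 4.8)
The plan is to establish the claimed bijection by verifying, in turn, that the assignment $[f:X\to Y,x_0]\mapsto(D,m^{N(\lambda_0)})$ is well defined on equivalence classes, that it is injective, and that it is surjective. All three assertions will be deduced from results already in place: the equivalence criterion of Proposition~\ref{2.3}, the construction of Lemma~\ref{2.6b}, and the bijection recalled in \S\ref{2.5b}.

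The first point to settle---and the only one requiring genuine care---is that the invariant is well defined. By Definition~\ref{2.6a} the invariant attached to a pointed $(\Lambda,G)$-cover is computed from an auxiliary pair $(\varepsilon,m)$ realizing Condition~\ref{2.3}(i), so I must first check that it is independent of that choice. Suppose $(f:X\to Y,x_0)$ satisfies \ref{2.3}(i) with two such pairs $(\varepsilon,m)$ and $(\varepsilon',m')$. Applying the last assertion of Proposition~\ref{2.3} to this cover and itself---with the identity serving as the equivalence---I obtain $\sigma\in N(\lambda_0)$ with $m'=\sigma m\sigma^{-1}$, so $m$ and $m'$ lie in the same $N(\lambda_0)$-orbit and $(D,m^{N(\lambda_0)})$ is unambiguous. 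The same assertion yields well-definedness on equivalence classes at once: if $(f,x_0)$ and $(f_1,x_0')$ are equivalent, then $m_1=\sigma m\sigma^{-1}$ for some $\sigma\in N(\lambda_0)$, whence $m^{N(\lambda_0)}=m_1^{N(\lambda_0)}$ and, since equivalent covers share the branch locus $D$, the two invariants coincide.

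Injectivity is then simply the converse direction of Proposition~\ref{2.3}: if two pointed $(\Lambda,G)$-covers have equal invariants, they are branched in the same $D$ and satisfy $m_1=\sigma m\sigma^{-1}$ for some $\sigma\in N(\lambda_0)$, so Proposition~\ref{2.3} concludes that they are equivalent. For surjectivity, I would start from an arbitrary class in $H^{\Lambda,G}_{n,\lambda_0}(Y,y_0)$ and pick a representative $(D,m)\in H^G_n(Y,y_0)$ (Definition~\ref{2.6}). By \S\ref{2.5b} there is a pointed $G$-cover $(p:C\to Y,z_0)$ with monodromy invariant exactly $(D,m)$, and feeding this into Lemma~\ref{2.6b} produces the pointed $(\Lambda,G)$-cover $(f:X\to Y,x_0)$ with $X=\Lambda\times^{G}C$ and $x_0=\pi(\lambda_0,z_0)$, whose invariant is $(D,m^{N(\lambda_0)})$ by that lemma; hence every class is attained. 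I expect no serious obstacle here: the substantive work has already been carried out in Proposition~\ref{2.3} and Lemma~\ref{2.6b}, and what remains is the bookkeeping of assembling these into the three standard verifications, the subtlest being the independence of the invariant from the auxiliary pair $(\varepsilon,m)$.
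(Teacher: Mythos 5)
Your proposal is correct and follows exactly the paper's route: the paper's own proof consists of citing Proposition~\ref{2.3} for well-definedness and injectivity and Lemma~\ref{2.6b} (together with \S\ref{2.5b}) for surjectivity, which is precisely what you do, only with the details written out. Your observation that independence from the auxiliary pair $(\varepsilon,m)$ follows by applying Proposition~\ref{2.3} to the cover and itself is the right way to make the implicit step explicit.
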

\begin{proof}
The map is well-defined and injective by Proposition~\ref{2.3}. It is surjective 
by Lemma~\ref{2.6b}.
\end{proof}
\begin{corollary}\label{2.8a}
Let $(f:X\to Y,x_0)$ be a pointed cover of $(Y,y_0)$ branched in 
$D\in (Y\setminus y_0)_{\ast}^{(n)}$. It is a 
pointed $(\Lambda,G)$-cover of $(Y,y_0)$ and satisfies Condition~(i) of 
Proposition~\ref{2.3} with some pair $(\varepsilon,m)$ if and only if 
it is equivalent to \linebreak
$(\Lambda \times^{G}C,\pi(\lambda_{0},z_{0}))\to (Y,y_0)$ where 
$(p:C\to  Y,z_0)$  is  a  pointed  $G$-cover  of  $(Y,y_0)$  with  monodromy 
invariant $(D,m)$. A pointed $G$-cover of $(Y,y_0)$ has this property if and 
only if its monodromy invariant equals $(D,m_{1})$ where 
$m_{1}=\sigma m \sigma^{-1}$ for some $\sigma \in N(\lambda_0)$. The set of 
$G$-equivalence  classes  of  such  pointed  $G$-covers  of  $(Y,y_0)$   has 
cardinality $|N(\lambda_0)|$.
\end{corollary}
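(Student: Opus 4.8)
The plan is to assemble the three assertions from the correspondence of \S\ref{2.5b}, the construction of Lemma~\ref{2.6b}, and the equivalence and uniqueness clauses of Proposition~\ref{2.3}; no genuinely new argument is required, only a careful bookkeeping of the $N(\lambda_0)$-orbit of $(D,m)$.

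First I would prove the opening equivalence. The ``if'' direction is exactly Lemma~\ref{2.6b}: starting from a pointed $G$-cover $(p:C\to Y,z_0)$ with monodromy invariant $(D,m)$, the cover $(\Lambda\times^{G}C,\pi(\lambda_0,z_0))\to(Y,y_0)$ is a pointed $(\Lambda,G)$-cover satisfying Proposition~\ref{2.3}(i) with the pair $(\varepsilon,m)$, where $\varepsilon(\lambda)=\pi(\lambda,z_0)$. For the converse, suppose $(f:X\to Y,x_0)$ satisfies Condition~(i) with a pair $(\varepsilon,m)$. As $m:\pi_1(Y\setminus D,y_0)\to G$ is an epimorphism satisfying Condition~\eqref{e2.2}, the correspondence of \S\ref{2.5b} yields a pointed $G$-cover $(p:C\to Y,z_0)$ whose monodromy invariant is $(D,m)$. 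Applying Lemma~\ref{2.6b} to it produces a pointed $(\Lambda,G)$-cover satisfying Condition~(i) with a pair $(\varepsilon',m)$ having the same $m$. Two pointed $(\Lambda,G)$-covers satisfying Condition~(i) with identical monodromy are equivalent by Proposition~\ref{2.3}, on taking $\sigma=id_{\Lambda}$; hence $(f:X\to Y,x_0)$ is equivalent to the cover built from $(D,m)$.

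Next I would treat the second equivalence. A pointed $G$-cover with monodromy invariant $(D,m_1)$ gives, again by Lemma~\ref{2.6b}, a pointed $(\Lambda,G)$-cover satisfying Condition~(i) with a pair $(\varepsilon_1,m_1)$. By the first part this cover is equivalent to $(f:X\to Y,x_0)$ precisely when the two pointed $(\Lambda,G)$-covers attached to $(D,m)$ and to $(D,m_1)$ are equivalent, and Proposition~\ref{2.3} identifies this with the existence of a $\sigma\in N(\lambda_0)$ such that $m_1=\sigma m\sigma^{-1}$.

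For the cardinality I would use \S\ref{2.5b}, under which the $G$-equivalence classes of pointed $G$-covers of $(Y,y_0)$ branched in $n$ points correspond bijectively to the pairs in $H^G_n(Y,y_0)$. The classes occurring in the statement then form exactly the orbit of $(D,m)$ under the left action $\sigma\ast(D,m)=(D,\sigma m\sigma^{-1})$ of $N(\lambda_0)$ from Definition~\ref{2.6}. By orbit--stabilizer this orbit has cardinality $|N(\lambda_0)|/|S|$, where $S=\{\sigma\in N(\lambda_0)\mid \sigma m\sigma^{-1}=m\}$. The one substantive point is the triviality of $S$: this is precisely the uniqueness clause of Proposition~\ref{2.3}, which forces $\sigma=id_{\Lambda}$ whenever $\sigma\in N(\lambda_0)$ satisfies $\sigma m\sigma^{-1}=m$. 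With $S$ trivial the orbit has $|N(\lambda_0)|$ elements, which is the claimed cardinality.
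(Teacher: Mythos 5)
Your proof is correct and is essentially the paper's own argument: the paper disposes of this corollary with the single line ``This follows from Proposition~\ref{2.3}'', and what you have written is exactly the intended assembly of Lemma~\ref{2.6b}, the correspondence of \S\ref{2.5b}, and the equivalence and uniqueness clauses of Proposition~\ref{2.3} (the uniqueness clause giving the trivial stabilizer, hence the orbit of size $|N(\lambda_0)|$). No discrepancy to report.
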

\begin{proof}
This follows from Proposition~\ref{2.3}.
\end{proof}
\begin{proposition}\label{2.9}
Let $D=\{b_1,\ldots,b_n\}\subset Y$. Let $f:X\to Y$ be a  cover  branched  in 
$D$. The following conditions are equivalent:
\begin{enumerate}
\item
There is a point $y_0\in Y\setminus D$, a bijection 
$\varepsilon : \Lambda \to f^{-1}(y_0)$ and an epimorphism 
$m:\pi_1(Y\setminus D,y_0)\to G$ such that 
\[
\varepsilon (\lambda m([\alpha])) = \varepsilon (\lambda )\alpha
\quad \text{for}\quad \forall \lambda \in \Lambda,
\forall [\alpha]\in \pi_1(Y\setminus D,y_0).
\]
\item
The condition of (i) holds for every $y\in Y\setminus D$ for some pair 
\linebreak
$\varepsilon_1 : \Lambda \to f^{-1}(y),\; m_1:\pi_1(Y\setminus D,y)\to G$.
\end{enumerate}
Suppose that $f_{1}:X_{1}\to  Y$  and  $f_{2}:X_{2}\to  Y$  are  two  covers 
branched in $D$. Suppose that $f_{1}$ and $f_{2}$ satisfy Condition~(i) 
respectively for some $y_1\in Y\setminus D$ and a pair 
$(\varepsilon_{1},m_{1})$ and for some $y_2\in Y\setminus D$ and a pair 
$(\varepsilon_{2},m_{2})$. Then $f_{1}$ is equivalent to $f_{2}$ if and only 
if there is a path $\gamma :I\to Y\setminus D$, $\gamma(0)=y_{1}$, 
$\gamma(1)=y_{2}$ and a $\sigma \in N_{S(\Lambda)}(G)$ such that 
\begin{equation*}
m_{2}([\alpha]) = \sigma m_{1}([\gamma \cdot \alpha \cdot \gamma^{-}]) 
\sigma^{-1}
\quad \text{for}\quad \forall [\alpha]\in \pi_1(Y\setminus D,y_2).
\end{equation*}
\end{proposition}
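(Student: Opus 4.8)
The plan is to prove the two implications of the first assertion and then to deduce the equivalence criterion from Lemma~\ref{2.2b} by transporting the monodromy data along a path. The implication (ii)$\Rightarrow$(i) is immediate, since specializing the second condition to any single point $y\in Y\setminus D$ gives the first. The content lies in (i)$\Rightarrow$(ii). Since $Y$ is smooth, projective and irreducible, $|Y^{an}|$ is a connected compact Riemann surface and $Y^{an}\setminus D$ is path-connected; hence for any $y\in Y\setminus D$ there is a path $\gamma:I\to Y\setminus D$ with $\gamma(0)=y_0$ and $\gamma(1)=y$. Lifting $\gamma$ in $X'=X\setminus f^{-1}(D)$ from each point of $f^{-1}(y_0)$ yields a bijection $f^{-1}(y_0)\to f^{-1}(y)$, $x\mapsto x\gamma$, and I would use it to transport the pair $(\varepsilon,m)$ based at $y_0$ to a pair based at $y$.

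Concretely, I would set $\varepsilon_1(\lambda)=\varepsilon(\lambda)\gamma$ and $m_1([\alpha])=m([\gamma\cdot\alpha\cdot\gamma^-])$ for every loop $\alpha$ based at $y$; note that $\gamma\cdot\alpha\cdot\gamma^-$ is a loop based at $y_0$, so $m_1$ is well-defined, and since $[\alpha]\mapsto[\gamma\cdot\alpha\cdot\gamma^-]$ is the change-of-base-point isomorphism $\pi_1(Y\setminus D,y)\to\pi_1(Y\setminus D,y_0)$, the map $m_1$ is again an epimorphism onto $G$. The key verification is the equivariance $\varepsilon_1(\lambda m_1([\alpha]))=\varepsilon_1(\lambda)\alpha$. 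For this I would compute $\varepsilon_1(\lambda)\alpha=(\varepsilon(\lambda)\gamma)\alpha=\varepsilon(\lambda)(\gamma\cdot\alpha)=\bigl(\varepsilon(\lambda)(\gamma\cdot\alpha\cdot\gamma^-)\bigr)\gamma$, using that lifting $\gamma^-$ and then $\gamma$ returns to the same point of the fiber; then apply the equivariance of $(\varepsilon,m)$ at $y_0$, namely $\varepsilon(\lambda)(\gamma\cdot\alpha\cdot\gamma^-)=\varepsilon(\lambda m([\gamma\cdot\alpha\cdot\gamma^-]))=\varepsilon(\lambda m_1([\alpha]))$, and finally re-apply the transport to land at $\varepsilon_1(\lambda m_1([\alpha]))$. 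This establishes (ii).

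For the equivalence criterion, I would fix a path $\gamma$ from $y_1$ to $y_2$ and use the transport above to equip $f_1$ with a pair $(\varepsilon_1',m_1')$ based at $y_2$, where $m_1'([\alpha])=m_1([\gamma\cdot\alpha\cdot\gamma^-])$. Now $f_1$ and $f_2$ are two covers with the same branch locus $D$, both satisfying Condition~(i) at the \emph{common} base point $y_2$, so Lemma~\ref{2.2b} applies verbatim: $f_1$ is equivalent to $f_2$ if and only if there is a $\sigma\in N_{S(\Lambda)}(G)$ with $m_2=\sigma m_1'\sigma^{-1}$, that is, $m_2([\alpha])=\sigma m_1([\gamma\cdot\alpha\cdot\gamma^-])\sigma^{-1}$ for all $[\alpha]\in\pi_1(Y\setminus D,y_2)$. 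For the ``if'' direction this is exactly the stated relation for the given $\gamma$; for the ``only if'' direction I would choose any path $\gamma$ (which exists by path-connectivity), transport, and read off the required $\sigma$ from Lemma~\ref{2.2b}.

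The main obstacle I anticipate is purely bookkeeping: keeping the path-product and change-of-base-point conventions consistent throughout, in particular that transporting the right action along $\gamma$ conjugates the monodromy by $[\alpha]\mapsto[\gamma\cdot\alpha\cdot\gamma^-]$ and that the lift of $\gamma^-$ followed by the lift of $\gamma$ is the identity on fibers. Once the transport used for (i)$\Rightarrow$(ii) is verified with the correct conventions, the equivalence criterion follows as a direct application of Lemma~\ref{2.2b} with the two covers brought to a common base point, and no further difficulty arises.
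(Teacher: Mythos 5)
Your proposal is correct and follows essentially the same route as the paper: the implication (i)$\Rightarrow$(ii) via transport of $(\varepsilon,m)$ along a path $\gamma$, setting $\varepsilon_1(\lambda)=\varepsilon(\lambda)\gamma$ and $m_1([\alpha])=m([\gamma\cdot\alpha\cdot\gamma^{-}])$, and then the equivalence criterion by bringing both covers to the common base point $y_2$ and invoking Lemma~\ref{2.2b}. Your equivariance computation is the paper's computation read in the opposite direction, and your handling of the two directions of the ``if and only if'' (using the given $\gamma$ versus choosing an arbitrary $\gamma$ by path-connectedness) matches the paper's argument.
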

\begin{proof}
(i)\: $\Rightarrow$ (ii): Let $y_1\in Y\setminus D$. Let $\gamma :I\to 
Y\setminus  D$  be  a 
path with $\gamma(0)=y_{0}$, $\gamma(1)=y_{1}$. Let 
$\varepsilon_{1} :\Lambda \to f^{-1}(y_1)$ be the bijection defined by 
$\varepsilon_{1}(\lambda)=\varepsilon(\lambda)\gamma$. Let 
$m_{1}:\pi_1(Y\setminus D,y_1)\to G$ be the  epimorphism  $m_{1}=m^{\gamma}$ 
defined by 
$m_{1}([\beta]) = m([\gamma \cdot \beta \cdot \gamma^{-}])$. Then 
\begin{equation*}
\varepsilon_{1} (\lambda m_{1}([\beta])) = 
\varepsilon (\lambda m([\gamma \cdot \beta \cdot \gamma^{-}]))\gamma
= \varepsilon (\lambda )\gamma \cdot \beta \cdot \gamma^{-} \cdot \gamma 
= \varepsilon_{1} (\lambda )\beta.
\end{equation*}
Given $f_{1}$ and $f_{2}$ let $\gamma :I\to Y\setminus D$  be  a  path  such 
that $\gamma(0)=y_{1}$, $\gamma(1)=y_{2}$. Let 
$\varepsilon'_{1} :\Lambda \to f_{1}^{-1}(y_2)$ and  $m'_{1}=m_{1}^{\gamma}$ 
be the pair obtained from $(\varepsilon_{1},m_{1})$ by  $\gamma$  as  above. 
Then by Lemma~\ref{2.2b} $f_{1}$ is equivalent to $f_{2}$ if and only if 
$m_{2}=\sigma m_{1}^{\gamma} \sigma^{-1}$ for some 
$\sigma \in N_{S(\Lambda)}(G)$.
\end{proof}
\begin{block}\label{2.11}
Let $D=\{b_1,\ldots,b_n\}\subset Y$. Given two points 
$y_{1},y_{2}\in Y\setminus D$ and two homomorphisms 
$m_{1}:\pi_1(Y\setminus D,y_1)\to G$ and 
$m_{2}:\pi_1(Y\setminus D,y_2)\to G$, $G\subset S(\Lambda)$, we write 
$m_{1}\sim_N m_{2}$ if there is a path 
$\gamma :I\to Y\setminus D$ with $\gamma(0)=y_{1}$, $\gamma(1)=y_{2}$ and an 
element $\sigma \in N_{S(\Lambda)}(G)$, such that 
$m_{2}([\alpha])=
\sigma m_{1}([\gamma \cdot \alpha \cdot \gamma^{-}]) \sigma^{-1}$ 
for $\forall [\alpha]\in \pi_1(Y\setminus D,y_2)$. This  is  a  relation  of 
equivalence. Given a homomorphism $m:\pi_1(Y\setminus D,y)\to G$  we  denote
by  $[m]$  its  equivalence  class.  It  is  clear  that  if   $m$   is   an 
epimorphism which satisfies  Condition~\eqref{e2.2}  relative  to  the  base 
point $y$ every other homomorphism of $[m]$ has these properties relative to 
its base point. Suppose a cover $f:X\to Y$ branched in $D$ satisfies 
Condition~(i) of Proposition~\ref{2.9} with $(\varepsilon,m)$ for some 
$y_0\in Y$. Then the set of all possible epimorphisms \linebreak
$m_{1}:\pi_1(Y\setminus D,y)\tto G$, $y\in Y\setminus D$ as in 
Proposition~\ref{2.9}(ii) equals the equivalence class $[m]$ 
(apply Proposition~\ref{2.9} to $f_{1}=f=f_{2}$, $y_{1}=y_{0}$,  $y_{2}=y$). 
We denote by $[m]_f$  the  equivalence  class  with  respect  to  $\sim_{N}$ 
determined by $f:X\to Y$. 
\end{block}
\begin{definition}\label{2.12}
In the setup of \S\ref{2.0} we denote by 
$H_n^{\Lambda,G}(Y)$ the set
\begin{equation*}
\begin{split}
H_n^{\Lambda,G}(Y) =  \{(D,[m])|& \; D\in  Y^{(n)}_{\ast}, \;
\text{where}\; m:\pi_1(Y\setminus D,y)\to G \\
&\text{is surjective and satisfies Condition\eqref{e2.2}}\}. 
\end{split}
\end{equation*}
\end{definition}
\begin{definition}\label{2.12a}
A cover $f:X\to Y$ branched in a set $D$  of  $n$  points,  which  satisfies 
Condition~(i) of Proposition~\ref{2.9} for some $y_0\in  Y\setminus  D$  and 
some pair $(\varepsilon,m)$ is called $(\Lambda,G)$-cover of $Y$ branched in 
$n$ points. The  pair  $(D,[m]_{f})\in  H_n^{\Lambda,G}(Y)$  is  called  the 
monodromy invariant of the cover (cf. \S\ref{2.11}).
\end{definition}
\begin{proposition}\label{2.13}
The  map  $[f:X\to  Y]   \mapsto   (D,[m]_{f})$   establishes   a   bijective 
correspondence   between    the    set    of    equivalence    classes    of 
$(\Lambda,G)$-covers of $Y$ branched in $n$ points and the set 
$H_n^{\Lambda,G}(Y)$. If the centralizer of $G$ in $S(\Lambda)$ is trivial,
$Z_{S(\Lambda)}(G)=\{1\}$, then the covering isomorphism between  every  two 
$(\Lambda,G)$-covers with the same monodromy invariant is unique.
\end{proposition}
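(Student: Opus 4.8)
The statement is the non-pointed analogue of Proposition~\ref{2.7}, and the plan is to verify four things for the map $[f:X\to Y]\mapsto(D,[m]_f)$: that it is well-defined on equivalence classes, injective, surjective, and that the covering isomorphism is unique once $Z_{S(\Lambda)}(G)=\{1\}$. Almost all of the content is already packaged in the equivalence criterion of Proposition~\ref{2.9}, in the definition of $\sim_N$ together with the discussion of \S\ref{2.11}, in the correspondence of \S\ref{2.5b}, and in Lemmas~\ref{2.6b} and~\ref{2.2b}, so the work is mainly to assemble these.

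First I would settle well-definedness. For a fixed $(\Lambda,G)$-cover $f:X\to Y$ the branch locus $D$ is intrinsic, and by \S\ref{2.11} the class $[m]_f$ with respect to $\sim_N$ is independent of the base point and of the pair $(\varepsilon,m)$ realizing Condition~(i) of Proposition~\ref{2.9}, since it is precisely the totality of epimorphisms furnished by Proposition~\ref{2.9}(ii). To see that $(D,[m]_f)$ depends only on the class of $f$, and simultaneously to obtain injectivity, I would invoke both directions of the equivalence criterion in Proposition~\ref{2.9}: because the relation $m_2([\alpha])=\sigma m_1([\gamma\cdot\alpha\cdot\gamma^-])\sigma^{-1}$ appearing there is exactly $m_1\sim_N m_2$, two covers $f_1,f_2$ are equivalent if and only if their monodromies lie in the same $\sim_N$-class, that is, if and only if $(D,[m]_{f_1})=(D,[m]_{f_2})$.

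Next I would prove surjectivity by realizing each invariant geometrically. Given $(D,[m])\in H_n^{\Lambda,G}(Y)$, choose a representative epimorphism $m:\pi_1(Y\setminus D,y_0)\to G$ satisfying Condition~\eqref{e2.2}, based at some $y_0\in Y\setminus D$. By \S\ref{2.5b} this produces a pointed $G$-cover $(p:C\to Y,z_0)$ with monodromy invariant $(D,m)$, and then Lemma~\ref{2.6b} shows that the associated cover $f:X\to Y$ with $X=\Lambda\times^G C$ is a $(\Lambda,G)$-cover realizing Condition~(i) of Proposition~\ref{2.9} with $(\varepsilon,m)$. Hence $m\in[m]_f$, which forces $[m]_f=[m]$, so the class of this $f$ maps to $(D,[m])$.

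Finally, for the uniqueness clause I would reduce to triviality of the group of covering automorphisms of a single cover. Assuming $Z_{S(\Lambda)}(G)=\{1\}$ and given two $(\Lambda,G)$-covers with the same invariant, they are equivalent by the above; if $h,h':X_1\to X_2$ are covering isomorphisms over $Y$, then $\varphi=h^{-1}\circ h'$ is a covering automorphism of $X_1$, so it is enough to show $\varphi=\mathrm{id}$. Applying the last part of Lemma~\ref{2.2b} with $f_2=f_1$, $\varepsilon_1=\varepsilon$, $m_1=m$, the bijection $\mu$ of $f_1^{-1}(y_0)$ induced by $\varphi$ corresponds to some $\sigma\in Z_{S(\Lambda)}(G)$ and equals the identity exactly when $\sigma=1$; the hypothesis then forces $\varphi=\mathrm{id}_{X_1}$ and $h=h'$. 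I expect the only genuinely delicate point to be the well-definedness and equivalence-invariance of $[m]_f$, namely its dependence on base point, path, and $N_{S(\Lambda)}(G)$-conjugation; but this has been isolated in \S\ref{2.11} and is encoded in the two implications of Proposition~\ref{2.9}, so the main obstacle is essentially already cleared and the remaining steps are formal.
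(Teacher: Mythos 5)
Your proposal is correct and follows essentially the same route as the paper: well-definedness and injectivity from Proposition~\ref{2.9}, surjectivity by realizing $(D,[m])$ through a pointed $G$-cover (\S\ref{2.5b}) and the construction $\Lambda\times^{G}C$ of Lemma~\ref{2.6b}, and the uniqueness clause from the last statement of Lemma~\ref{2.2b}. The extra details you supply (base-point independence via \S\ref{2.11}, reducing uniqueness of the isomorphism to triviality of covering automorphisms) are exactly the ingredients the paper's shorter proof leaves implicit.
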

\begin{proof}
The map is well-defined and injective  by  Proposition~\ref{2.9}.  We  claim 
that it is surjective. Let $(D,[m])\in H_n^{\Lambda,G}(Y)$
where $m:\pi_1(Y\setminus D,y_0)\tto G$ with $y_0\in Y\setminus D$ satisfies 
Condition\eqref{e2.2}. Let $(p:C\to Y,z_0)$ be a pointed $G$-cover of 
$(Y,y_0)$ with monodromy  invariant  $(D,m)$.  Then  the  cover  $f:X\to  Y$ 
constructed in Lemma~\ref{2.6b} is a $(\Lambda,G)$-cover with monodromy 
invariant $(D,[m])$. The last statement of the proposition was proved in 
Lemma~\ref{2.2b}.
\end{proof}
\begin{proposition}\label{2.14}
Let $f:X\to Y$ be a $(\Lambda,G)$-cover with monodromy invariant 
$(D,[m])\in H_n^{\Lambda,G}(Y)$. Then $f$ has a structure of a 
$(\Lambda,G_{1})$-cover if and only if $G_{1}=\varphi  G  \varphi^{-1}$  for 
some $\varphi \in S(\Lambda)$ and the corresponding monodromy invariant is 
$(D,\varphi [m]\varphi^{-1})\in H_n^{\Lambda,G_{1}}(Y)$.
\end{proposition}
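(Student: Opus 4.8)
The plan is to use the fact, established in Proposition~\ref{2.9}, that being a $(\Lambda,G)$-cover is independent of the chosen base point. Thus the given $(\Lambda,G)$-structure and any putative $(\Lambda,G_1)$-structure can both be realized over one common point $y_0\in Y\setminus D$, by pairs $(\varepsilon,m)$ and $(\varepsilon_1,m_1)$ with $\varepsilon,\varepsilon_1:\Lambda\to f^{-1}(y_0)$ and $\Imm(m)=G$, $\Imm(m_1)=G_1$. The branch locus $D$ is intrinsic to $f$, so it is the same for both. The essential observation is that the right action of $\pi_1(Y\setminus D,y_0)$ on the fiber $f^{-1}(y_0)$ is fixed once $f$ is given, so the two structures differ only by the transport of this single action along two different bijections.

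First I would treat the forward implication together with the computation of the invariant. Given both structures, let $\varphi\in S(\Lambda)$ be the unique permutation with $\varepsilon_1(\lambda)=\varepsilon(\lambda\varphi)$ for all $\lambda$. Substituting this into the defining relations, and using that $\varepsilon_1(\lambda)\alpha=\varepsilon(\lambda\varphi)\alpha$ refers to the same fiber action, I compute $\varepsilon_1(\lambda)\alpha=\varepsilon\bigl((\lambda\varphi)m([\alpha])\bigr)$ on one hand and $\varepsilon_1(\lambda m_1([\alpha]))=\varepsilon\bigl(\lambda m_1([\alpha])\varphi\bigr)$ on the other. Equating and using injectivity of $\varepsilon$ gives $m_1([\alpha])\varphi=\varphi m([\alpha])$ in $S(\Lambda)$ for all $[\alpha]$, hence $m_1=\varphi m\varphi^{-1}$ and, taking images, $G_1=\varphi G\varphi^{-1}$. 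This simultaneously shows that $G_1$ is conjugate to $G$ and identifies the new monodromy homomorphism as $\varphi m\varphi^{-1}$, so the monodromy invariant of $f$ as a $(\Lambda,G_1)$-cover is $(D,[\varphi m\varphi^{-1}])$. For the converse I would reverse this construction: assuming $G_1=\varphi G\varphi^{-1}$ for some $\varphi\in S(\Lambda)$, I set $\varepsilon_1(\lambda)=\varepsilon(\lambda\varphi)$ and $m_1=\varphi m\varphi^{-1}$ and verify directly that $\varepsilon_1(\lambda m_1([\alpha]))=\varepsilon(\lambda\varphi m([\alpha]))=\varepsilon(\lambda\varphi)\alpha=\varepsilon_1(\lambda)\alpha$, so that $(\varepsilon_1,m_1)$ endows $f$ with the structure of a $(\Lambda,G_1)$-cover with image $G_1$; this is the previous manipulation read backwards.

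The one point requiring genuine care, and the main obstacle beyond the right-action bookkeeping, is that the notation $\varphi[m]\varphi^{-1}$ must denote a well-defined element of $H_n^{\Lambda,G_1}(Y)$, i.e. that conjugation by $\varphi$ carries $\sim_N$-classes relative to $G$ to $\sim_N$-classes relative to $G_1$. Here I would use $N_{S(\Lambda)}(G_1)=\varphi N_{S(\Lambda)}(G)\varphi^{-1}$: if $m\sim_N m'$ via a path $\gamma$ and some $\sigma\in N_{S(\Lambda)}(G)$, then from $m'([\alpha])=\sigma m([\gamma\cdot\alpha\cdot\gamma^{-}])\sigma^{-1}$ one gets $\varphi m'\varphi^{-1}([\alpha])=(\varphi\sigma\varphi^{-1})\,\varphi m\varphi^{-1}([\gamma\cdot\alpha\cdot\gamma^{-}])\,(\varphi\sigma\varphi^{-1})^{-1}$, so $\varphi m\varphi^{-1}\sim_N\varphi m'\varphi^{-1}$ relative to $G_1$ via the same $\gamma$ and $\varphi\sigma\varphi^{-1}\in N_{S(\Lambda)}(G_1)$. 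Thus $\varphi[m]\varphi^{-1}:=[\varphi m\varphi^{-1}]$ is well-defined.

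Finally I would record that this class does not depend on the choice of $\varphi$ with $G_1=\varphi G\varphi^{-1}$: any two such choices differ by an element $\tau\in N_{S(\Lambda)}(G)$, and conjugation by $\tau$ fixes $[m]$, since taking the constant path and $\sigma=\tau$ in the definition of $\sim_N$ gives $\tau m\tau^{-1}\sim_N m$. With this identification, the computation of the forward implication shows that the monodromy invariant of $f$ as a $(\Lambda,G_1)$-cover is exactly $(D,\varphi[m]\varphi^{-1})$, which completes the argument.
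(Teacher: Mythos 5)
Your proof is correct and follows essentially the same route as the paper: both directions come down to defining $\varphi\in S(\Lambda)$ by $\varepsilon_{1}(\lambda)=\varepsilon(\lambda\varphi)$ and reading off $m_{1}=\varphi m\varphi^{-1}$ (hence $G_{1}=\varphi G\varphi^{-1}$) from the two equivariance relations, exactly as in the paper's computation. The only difference is that you additionally make explicit two points the paper leaves implicit — that conjugation by $\varphi$ carries $\sim_{N}$-classes for $G$ to $\sim_{N}$-classes for $G_{1}$, and that the resulting class $\varphi[m]\varphi^{-1}$ is independent of the choice of $\varphi$ — which is a harmless (and welcome) tightening rather than a different argument.
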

\begin{proof}
Let $y_0\in Y\setminus D$ and let $\varepsilon :\Lambda \overset{\sim}{\lto} 
f^{-1}(y_0)$, 
$m:\pi_1(Y\setminus D,y_0)\tto G$ satisfy 
$\varepsilon (\lambda m([\alpha])) = \varepsilon (\lambda )\alpha$ 
for $\forall \lambda \in \Lambda$ and 
$\forall [\alpha] \in \pi_1(Y\setminus D,y_0)$. Let 
$\varphi \in S(\Lambda)$ and let
$G_{1}=\varphi G \varphi^{-1}$. Let 
$\varepsilon_{1} :\Lambda \to f^{-1}(y_0)$ be the bijection defined by 
$\varepsilon_{1}(\lambda)=\varepsilon (\lambda \varphi)$ and let 
$m_{1} = \varphi m \varphi^{-1} : \pi_1(Y\setminus  D,y_0)\to  G_{1}$.  Then 
for $\forall [\alpha] \in \pi_1(Y\setminus D,y_0)$
\begin{equation*}
\varepsilon_{1}(\lambda m_{1}([\alpha])) =
\varepsilon_{1}(\lambda \varphi m([\alpha])\varphi^{-1}) =
\varepsilon (\lambda \varphi m([\alpha])) = 
\varepsilon (\lambda \varphi)\alpha = \varepsilon_{1}(\lambda)\alpha,
\end{equation*}
hence $f:X\to Y$ is a $(\Lambda,G_{1})$-cover with monodromy invariant 
$(D,\varphi [m]\varphi^{-1})$. Viceversa, let 
$\varepsilon_1 :\Lambda \overset{\sim}{\lto} f^{-1}(y_0)$ and
$m_1:\pi_1(Y\setminus D,y_0)\tto G_1$ satisfy
$\varepsilon_1 (\lambda m_1([\alpha])) = \varepsilon_1 (\lambda )\alpha$ 
for $\forall \lambda \in \Lambda$ and 
$\forall [\alpha] \in \pi_1(Y\setminus D,y_0)$. Let $\varphi \in S(\Lambda)$ 
satisfy $\varepsilon_{1}(\lambda) = \varepsilon(\lambda \varphi)$
for $\forall \lambda \in \Lambda$. Then 
\begin{equation*}
\varepsilon((\lambda \varphi)\varphi^{-1}m_{1}([\alpha])\varphi) =
\varepsilon_{1}(\lambda m_{1}([\alpha])) = 
\varepsilon_{1}(\lambda)\alpha = \varepsilon(\lambda \varphi)\alpha =
\varepsilon((\lambda \varphi)m([\alpha])),
\end{equation*}
so $m_{1}([\alpha]) = \varphi m([\alpha]) \varphi^{-1}$
 for $\forall [\alpha] \in \pi_1(Y\setminus D,y_0)$. This implies 
that $G_{1}=\varphi G \varphi^{-1}$, since $m$ and $m_{1}$ are  epimorphisms 
by assumption.
\end{proof}
\begin{proposition}\label{2.14b}
The following two conditions for a cover $f:X\to Y$ are equivalent:
\begin{enumerate}
\item
$(f:X\to Y,x_0)$ is a pointed cover of $(Y,y_0)$ 
(cf. Definition~\ref{2.1}) and $f:X\to Y$ is a $(\Lambda,G)$-cover.
\item
Let $\lambda_{0}\in \Lambda$. Then $(f:X\to Y,x_0)$ is a 
pointed $(\Lambda,G)$-cover of $(Y,y_0)$ according to Definition~\ref{2.6a}.
\end{enumerate}
\end{proposition}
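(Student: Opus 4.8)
The plan is to treat the two implications separately; the content lies almost entirely in $(i)\Rightarrow(ii)$, while $(ii)\Rightarrow(i)$ is a matter of unwinding definitions. For $(ii)\Rightarrow(i)$ I would argue as follows. If $(f:X\to Y,x_0)$ is a pointed $(\Lambda,G)$-cover of $(Y,y_0)$ in the sense of Definition~\ref{2.6a}, then by that definition it is a pointed cover of $(Y,y_0)$ and there is a pair $(\varepsilon,m)$ satisfying Proposition~\ref{2.3}(i) for some $D\in(Y\setminus y_0)^{(n)}_{\ast}$. Discarding the normalization $\varepsilon(\lambda_0)=x_0$, this same pair witnesses Condition~(i) of Proposition~\ref{2.9} at the point $y_0$, so $f:X\to Y$ is a $(\Lambda,G)$-cover in the sense of Definition~\ref{2.12a}. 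Thus $(i)$ holds.

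For $(i)\Rightarrow(ii)$ I start from a pointed cover $(f:X\to Y,x_0)$ of $(Y,y_0)$ whose underlying cover is a $(\Lambda,G)$-cover branched in some $D$. Since $(f,x_0)$ is a pointed cover, $y_0\notin D$, hence $y_0\in Y\setminus D$ and $D\in(Y\setminus y_0)^{(n)}_{\ast}$. Because $f$ is a $(\Lambda,G)$-cover, Condition~(i) of Proposition~\ref{2.9} holds at some base point, and by the equivalence $(i)\Leftrightarrow(ii)$ of Proposition~\ref{2.9} it holds at the prescribed point $y_0$ as well: there is a bijection $\varepsilon:\Lambda\to f^{-1}(y_0)$ and an epimorphism $m:\pi_1(Y\setminus D,y_0)\to G$ with $\varepsilon(\lambda m([\alpha]))=\varepsilon(\lambda)\alpha$. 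This pair need not send $\lambda_0$ to $x_0$, and the task is to repair it without disturbing the image group $G$.

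The decisive step is the adjustment of $(\varepsilon,m)$. Write $x_0=\varepsilon(\lambda_1)$ for the unique $\lambda_1\in\Lambda$; since $G$ acts transitively on $\Lambda$ (see \S\ref{2.0}), I may choose $g\in G$ with $\lambda_0 g=\lambda_1$. I then set $\varepsilon'(\lambda)=\varepsilon(\lambda g)$ and $m'=gmg^{-1}$, that is $m'([\alpha])=g\,m([\alpha])\,g^{-1}$. Here $\varepsilon'$ is again a bijection (right multiplication by $g$ permutes $\Lambda$), and $m'$ is again an epimorphism onto $G$ precisely because $gGg^{-1}=G$ for $g\in G$; the normalization is built in, $\varepsilon'(\lambda_0)=\varepsilon(\lambda_0 g)=\varepsilon(\lambda_1)=x_0$. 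The compatibility is a one-line verification by associativity of the right $G$-action on $\Lambda$ together with the compatibility of $(\varepsilon,m)$:
\[
\varepsilon'(\lambda m'([\alpha]))=\varepsilon\bigl(\lambda g\,m([\alpha])\,g^{-1}g\bigr)=\varepsilon\bigl((\lambda g)m([\alpha])\bigr)=\varepsilon(\lambda g)\alpha=\varepsilon'(\lambda)\alpha.
\]
Hence $(\varepsilon',m')$ satisfies Proposition~\ref{2.3}(i) together with $\varepsilon'(\lambda_0)=x_0$, so $(f:X\to Y,x_0)$ is a pointed $(\Lambda,G)$-cover of $(Y,y_0)$ by Definition~\ref{2.6a}, which is $(ii)$.

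The only genuine obstacle is the re-basing and renormalization in $(i)\Rightarrow(ii)$: one must produce a pair at the prescribed point $y_0$ that carries $\lambda_0$ to the prescribed point $x_0$ \emph{while keeping the image equal to $G$}, not merely to a conjugate of $G$. Conjugating $m$ by an element of $G$ (rather than of $S(\Lambda)$, as in Proposition~\ref{2.14}) is exactly what preserves the image group, and the transitivity of $G$ on $\Lambda$ is what guarantees the existence of an element $g$ sending $\lambda_0$ to $\lambda_1$. Everything else is formal.
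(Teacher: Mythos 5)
Your proof is correct and follows essentially the same route as the paper: the paper likewise reduces to producing a pair $(\varepsilon,m)$ at the base point $y_0$ (implicitly via Proposition~\ref{2.9}) and then replaces it by $(\varepsilon',m')$ with $\varepsilon'(\lambda)=\varepsilon(\lambda g)$, $m'=gmg^{-1}$ for a suitable $g\in G$, using transitivity of $G$ on $\Lambda$ to achieve $\varepsilon'(\lambda_0)=x_0$. You merely spell out the details the paper leaves implicit (the explicit choice of $g$, the one-line compatibility check, and the trivial direction $(ii)\Rightarrow(i)$), which is fine.
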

\begin{proof}
Let $D$ be the branch locus of $f$. Condition~(i) implies that 
$y_{0}\notin D$ and there exists a bijection 
$\varepsilon:\Lambda \to f^{-1}(y_0)$ and an epimorphism 
$m:\pi_1(Y\setminus  D,y_0)\to  G$  such  that 
$\varepsilon(\lambda m([\alpha])) = \varepsilon(\lambda) \alpha$ for 
$\forall \lambda \in \Lambda$ and $\forall [\alpha] \in 
\pi_1(Y\setminus D,y_0)$. The group $G$ acts transitively on 
$\Lambda$. One replaces the pair $(\varepsilon,m)$ with the pair 
$(\varepsilon',m')$, where $\varepsilon'(\lambda) =
 \varepsilon (\lambda g)$, $m'=gmg^{-1}$ for an appropriate $g\in G$, 
so that $(\varepsilon',m')$ satisfies the additional condition 
$\varepsilon'(\lambda_{0}) = x_{0}$.
\end{proof}
\begin{remark}\label{2.14bb}
Choosing a marked element $\lambda_{0}\in \Lambda$ and imposing 
$\varepsilon(\lambda_{0})=x_{0}$ is a normalizing condition for 
pointed degree $d$ covers of $(Y,y_0)$ whose monodromy group equals $G$. 
This serves for defining the monodromy invariant 
$(D,m^{N(\lambda_0)})\in H^{\Lambda,G}_{n,\lambda_0}(Y,y_0)$ so that  
the bijection of Proposition~\ref{2.7} to hold. The choice of another marked element 
$\lambda_{1}\in \Lambda$ is discussed in Proposition~\ref{3.41}.
\end{remark}
\section{Smooth, proper families of covers with a fixed monodromy group} \label{s3}
\begin{definition}\label{2.15}
Let $X$ and $S$ be algebraic varieties.
\begin{enumerate}
\item
A morphism $f:X\to Y\times S$ is called a smooth, proper family of covers of  $Y$ 
branched in $n$ points if: $\pi_{2}\circ f: X\to S$ is a proper,  smooth 
morphism, for every $s\in S$ the fiber $X_{s}$ is irreducible and 
$f_s:X_s\to Y$ is a cover branched in $n$ points. Two families 
of this type $f_{1}:X_{1}\to Y\times S$ and $f_{2}:X_{2}\to Y\times  S$  are 
called equivalent if there exists an  isomorphism  $h:X_{1}\to  X_{2}$  such 
that $f_{1}=f_{2}\circ h$. 
\item
If, furthermore, $f_s:X_s\to Y$ is a $(\Lambda,G)$-cover for  $\forall  s\in 
S$ the morphism $f:X\to Y\times S$ is called a smooth, proper family of 
$(\Lambda,G)$-covers of $Y$ branched in $n$ points.
\end{enumerate}
\end{definition}
Under the assumptions of (i) the morphism $f:X\to Y\times S$ is finite, surjective 
and flat \cite[Proposition~2.6]{K7}. If $S$ is connected, then all covers $f_s:X_s\to Y$
have the same degree $d$, where $d$ is the rank of the locally free sheaf 
$f_{\ast}\mathcal{O}_X$.
It is clear that two families are equivalent if and only if there is an 
$S$-isomorphism $h:X_{1}\to X_{2}$, which is a covering isomorphism over $Y$ 
for $\forall s\in S$, i.e. $f_{1s}=f_{2s}\circ h_{s}$ for $\forall s\in S$.
\begin{definition}\label{2.15a}
Let $y_0\in Y$. 
\begin{enumerate}
\item
A smooth, proper family of pointed covers of $(Y,y_0)$ branched in 
$n$ points is a pair 
$(f:X\to Y\times S,\eta:S\to X)$ 
of morphisms of algebraic varieties 
such that $f:X\to Y\times S$ satisfies 
the conditions of Definition~\ref{2.15}(i), for $\forall s\in S$ the cover 
$f_s:X_s\to Y\times \{s\}$ is unbranched at $(y_{0},s)$, and 
$\eta(s)\in f^{-1}(y_0,s)$. Two families of this type 
$(f_{1}:X_{1}\to Y\times S,\eta_{1})$ and 
$(f_{2}:X_{2}\to Y\times S,\eta_{2})$
are called equivalent 
if there exists an  isomorphism  $h:X_{1}\to  X_{2}$  such 
that $f_{1}=f_{2}\circ h$ and $\eta_{2}=h\circ \eta_{1}$.
\item
If, furthermore, $f_s:X_s\to Y\times \{s\}$ is a $(\Lambda,G)$-cover 
for  $\forall  s\in S$ the pair $(f:X\to Y\times S,\eta:S\to X)$ is called 
a smooth, proper family of pointed $(\Lambda,G)$-covers of $(Y,y_0)$ 
branched in  $n$ points.
\end{enumerate}
\end{definition}
\begin{proposition}\label{2.15b}
Let $y_0\in Y$ and let $\lambda_0 \in \Lambda$. 
Let $(f:X\to Y,\eta :S\to X)$ be a smooth, proper family of  pointed 
$(\Lambda,G)$-covers of $(Y,y_0)$ branched in  $n$ points.
Then for every $s\in S$ the pointed cover ($f_s:X_s\to Y,\eta(s))$
of $(Y,y_0)$ satisfies the conditions of Definition~\ref{2.6a} 
(we identify $Y\times \{s\}$ with $Y$).
\end{proposition}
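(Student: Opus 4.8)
The plan is to reduce the statement to the fibre-wise equivalence already established in Proposition~\ref{2.14b}, applied separately for each $s\in S$. I would fix $s\in S$ and, as the statement allows, identify $Y\times\{s\}$ with $Y$, writing $f_s:X_s\to Y$ for the fibre of $f$ over $s$ and $x_0:=\eta(s)$ for the value of the section at $s$.

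The first step is to verify that $(f_s:X_s\to Y,x_0)$ is a pointed cover of $(Y,y_0)$ in the sense of Definition~\ref{2.1}. This is pure unwinding of the definitions. From Definition~\ref{2.15}(i) and the properties of a smooth, proper family recorded just after it, $f$ is finite, surjective and flat, and the fibre $X_s$ is smooth, projective and irreducible; hence $f_s:X_s\to Y$ is a cover in the sense of Definition~\ref{2.0a}, and its branch locus $D_s$ consists of exactly $n$ points. By Definition~\ref{2.15a}(i) the family is unbranched at $(y_0,s)$, so $y_0\notin D_s$ and $D_s\in(Y\setminus y_0)^{(n)}_{\ast}$; moreover $x_0=\eta(s)\in f^{-1}(y_0,s)=f_s^{-1}(y_0)$. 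Thus all the data demanded by Definition~\ref{2.1} are in place.

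The second step simply records that, by the defining hypothesis of Definition~\ref{2.15a}(ii), the cover $f_s:X_s\to Y$ is a $(\Lambda,G)$-cover. Combining the two steps, $(f_s:X_s\to Y,x_0)$ satisfies exactly condition~(i) of Proposition~\ref{2.14b}. I would then invoke the implication (i)$\Rightarrow$(ii) of that proposition, which gives that $(f_s:X_s\to Y,x_0)$ is a pointed $(\Lambda,G)$-cover of $(Y,y_0)$ in the sense of Definition~\ref{2.6a}; as $s\in S$ was arbitrary, this is the assertion.

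I do not anticipate a serious obstacle, since the substance of the proposition is the fibre-wise bookkeeping that matches the two clauses of Definition~\ref{2.15a} to the two clauses of Proposition~\ref{2.14b}(i). The only points needing care are the correct identification of $Y\times\{s\}$ with $Y$, so that the base point $y_0$ and the marked element $\lambda_0$ retain their roles upon restriction to the fibre, and the observation that $f_s$ is genuinely a cover (finite, surjective, with smooth projective irreducible source), which is ensured by the standing properties of a smooth, proper family noted after Definition~\ref{2.15}.
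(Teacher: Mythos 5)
Your proof is correct and takes exactly the same route as the paper, whose entire proof is the single line ``This follows from Proposition~\ref{2.14b}''; you have merely spelled out the fibre-wise bookkeeping (that each fibre is a pointed cover in the sense of Definition~\ref{2.1} and a $(\Lambda,G)$-cover, so that Proposition~\ref{2.14b}(i)$\Rightarrow$(ii) applies) that the paper leaves implicit.
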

\begin{proof}
This follows from Proposition~\ref{2.14b}.
\end{proof}
\begin{block}\label{2.16}
Let $D=\{b_1,\ldots,b_n\}\subset Y\setminus y_0$,
let $\overline{U}_{1},\ldots,\overline{U}_{n}$ be disjoint 
embedded closed disks in $Y\setminus y_0$ such  that  $b_{i}\in 
U_{i}$  for  $\forall  i$, where $U_{i}$ is the interior of 
$\overline{U}_{i}$. 
Let  $N_D(U_1,\ldots,U_n)\subset  (Y\setminus  y_0)^{(n)}_{\ast}$   be   the  
open  neighborhood of $D$ in the complex topology (i.e. that of 
$((Y\setminus y_0)^{(n)}_{\ast})^{an}$), which consists 
of $E=\{y_{1},\ldots,y_{n}\}$ such that $y_{i}\in U_{i}$ for every $i$.
The inclusion 
$Y\setminus \cup_{i=1}^{n} U_{i}\hookrightarrow Y\setminus 
D$ is a deformation retract, so  for  every  homomorphism 
$m:\pi_1(Y\setminus D,y_0)\to G$
 and  every  $E\in N_D(U_1,\ldots,U_n)$ there is a unique  homomorphism 
$m(E): \pi_1(Y\setminus E,y_0)\to G$ such that the following diagram commutes:
\begin{equation}\label{e2.16}
\xymatrix{
\pi_1(Y\setminus D,y_0)\ar[rd]_{m}&
\pi_1(Y\setminus \cup_{i=1}^{n} U_i,y_0)\ar[l]_{\cong}\ar[r]^{\cong}\ar[d]&
\pi_1(Y\setminus E,y_0)\ar[dl]^{m(E)}\\
&G
}
\end{equation}
Given a  path $\gamma :I\to Y\setminus E$ 
 we denote by $[\gamma ]_{E}$ its homotopy class in $Y\setminus E$. We denote by 
$N_{(D,m)}(U_1,\ldots,U_n)$ the subset of $H^G_n(Y,y_0)$
\begin{equation}\label{e2.16bis}
N_{(D,m)}(U_1,\ldots,U_n) = \{(E,m(E))|E\in N_D(U_1,\ldots,U_n)\}.
\end{equation}
The set $H^G_n(Y,y_0)$ has a structure of affine algebraic variety, the map 
$\delta : H^G_n(Y,y_0) \to (Y\setminus y_0)^{(n)}_{\ast}$, defined by 
$\delta(D,m)=D$ is an \'{e}tale cover, the sets  $N_{(D,m)}(U_1,\ldots,U_n)$ 
form an open sets basis of the topology  of  $H^G_n(Y,y_0)^{an}$  and  every 
open  set  \linebreak
$N_D(U_1,\ldots,U_n)\subset  (Y\setminus  y_0)^{(n)}_{\ast}$   is 
evenly covered with respect to the topological covering map $|\delta^{an}|$:
\begin{equation}\label{e2.16a}
\delta^{-1}(N_D(U_1,\ldots,U_n)) = \bigsqcup_{m}N_{(D,m)}(U_1,\ldots,U_n)
\end{equation}
(cf. \cite[Section~1]{K1}).
\end{block}
\begin{lemma}\label{2.17}
Let $f:X\to Y\times S$ be a smooth, proper  family  of  covers  of  $Y$ 
branched in $n$ points. Suppose that $f_{s_0}:X_{s_0}\to Y$ is a 
$(\Lambda,G)$-cover for some $s_{0}\in S$. Then there exists a  neighborhood 
$V\subset |S^{an}|$ of $s_{0}$ such that $f_s:X_s\to Y$ is a 
$(\Lambda,G)$-cover for $\forall s\in V$.
\end{lemma}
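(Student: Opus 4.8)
The plan is to show that being a $(\Lambda,G)$-cover is detected by the monodromy group, and that this group is locally constant in the family because, over the locus where the branch points stay inside fixed disks, the family is a topologically trivial covering in the $S$-direction. First I would record the witnessing data for $s_0$: since $f_{s_0}$ is a $(\Lambda,G)$-cover, Definition~\ref{2.12a} together with Proposition~\ref{2.9} provides a point $y_0\in Y\setminus D_0$ (here $D_0=\{b_1,\ldots,b_n\}$ is the branch locus of $f_{s_0}$), a bijection $\varepsilon:\Lambda\to f_{s_0}^{-1}(y_0)$, and an epimorphism $m:\pi_1(Y\setminus D_0,y_0)\tto G$ satisfying $\varepsilon(\lambda m([\alpha]))=\varepsilon(\lambda)\alpha$. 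I would then fix disjoint closed disks $\overline{U}_1,\ldots,\overline{U}_n\subset Y\setminus y_0$ with $b_i\in U_i$ as in \S\ref{2.16}, making available the neighborhoods $N_{D_0}(U_1,\ldots,U_n)$ and the transported homomorphisms $m(E)$.

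Next I would construct $V$. Being finite and flat, $f$ has a relative discriminant divisor $\mathcal{D}\subset Y\times S$, finite and flat over $S$, whose fibre $\mathcal{D}_s$ is the branch divisor of $f_s$; thus $s\mapsto\mathcal{D}_s$ is a morphism $S\to Y^{(e)}$ with $e=\deg\mathcal{D}_s$. Since $\mathcal{D}_{s_0}$ is supported on $D_0\subset\bigcup_i U_i$, with some local multiplicity $e_i\geq 1$ at each $b_i$, continuity of this morphism gives a connected, contractible neighborhood $V$ of $s_0$ in $|S^{an}|$ such that for every $s\in V$ the divisor $\mathcal{D}_s$ lies in $\bigcup_i U_i$ with total multiplicity $e_i$ inside each $U_i$. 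As each $e_i\geq 1$ the support of $\mathcal{D}_s$ meets every $U_i$; since $|D_s|=n$ equals the number of disks, $D_s$ has exactly one point in each $U_i$, i.e. $D_s\in N_{D_0}(U_1,\ldots,U_n)$, and no branch point lies over $Y\setminus\bigcup_i U_i$.

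The crux is the comparison of the monodromy of $f_s$ with that of $f_{s_0}$. Over $(Y\setminus\bigcup_i U_i)\times V$ the analytic map $f^{an}$ is a finite topological covering map, being finite and unramified there. Because $V$ is contractible, the projection induces an isomorphism $\pi_1((Y\setminus\bigcup_i U_i)\times V,(y_0,s_0))\cong\pi_1(Y\setminus\bigcup_i U_i,y_0)$, so this covering is the pullback of its restriction to $(Y\setminus\bigcup_i U_i)\times\{s_0\}$. In particular it is trivial over $\{y_0\}\times V$, yielding for each $s\in V$ a canonical bijection $\tau_s:f_{s_0}^{-1}(y_0)\to f_s^{-1}(y_0)$ defined by lifting paths in $\{y_0\}\times V$, and the monodromy representation of $\pi_1(Y\setminus\bigcup_i U_i,y_0)$ attached to $f_s$ is carried to the one attached to $f_{s_0}$ by $\tau_s$. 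I expect this step --- extracting the product structure of the family over the unramified locus and reading off the equality of the two monodromies --- to be the main obstacle, the remainder being continuity and the covering-space dictionary.

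Finally I would set $\varepsilon_s=\tau_s\circ\varepsilon:\Lambda\to f_s^{-1}(y_0)$ and let $m_s=m(D_s):\pi_1(Y\setminus D_s,y_0)\to G$ be the transported epimorphism of \S\ref{2.16}, which exists since $D_s\in N_{D_0}(U_1,\ldots,U_n)$ and has the same image $G$ as $m$. Identifying $\pi_1(Y\setminus D_s,y_0)$ with $\pi_1(Y\setminus\bigcup_i U_i,y_0)$ through the deformation retract $Y\setminus\bigcup_i U_i\eto Y\setminus D_s$, the previous paragraph gives $\varepsilon_s(\lambda m_s([\alpha]))=\varepsilon_s(\lambda)\alpha$ for all $\lambda\in\Lambda$ and all $[\alpha]\in\pi_1(Y\setminus D_s,y_0)$. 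By Proposition~\ref{2.9}(i) and Definition~\ref{2.12a} this exhibits $f_s:X_s\to Y$ as a $(\Lambda,G)$-cover for every $s\in V$, which is the assertion.
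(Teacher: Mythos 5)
Your proposal is correct, and its overall strategy coincides with the paper's: fix the data $(\varepsilon,m)$ at $s_0$, force the branch loci of nearby fibers into the disks $U_i$, transport $m$ to $m(D_s)$ through the deformation retract of \S\ref{2.16}, and deduce that the fiberwise monodromy is constant in $s$ because $f$ is a topological covering over the unramified locus. Two implementation differences are worth recording. The minor one: where the paper simply quotes \cite[Proposition~2.6]{K7} for the fact that $s\mapsto B_s$ defines a morphism $\beta:S\to Y^{(n)}_{\ast}$ (hence a continuous map of the associated complex spaces), you rebuild this from the relative discriminant divisor and a continuity-of-roots count; that is fine and self-contained.

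The substantive difference: your comparison of monodromies requires $V$ to be contractible (really: simply connected and locally path connected), so that the covering over $(Y\setminus\bigcup_i U_i)\times V$ is pulled back from the slice $s=s_0$ and in particular trivializes over $\{y_0\}\times V$, yielding the bijections $\tau_s$. For an arbitrary reduced, possibly singular, variety $S$ the existence of such a $V$ is not a consequence of continuity, as your text suggests; it is the local contractibility of complex analytic spaces (via triangulability or the local cone structure) --- a true but nontrivial theorem that you invoke silently, and the only point where your argument exceeds elementary covering-space theory. The paper's proof is arranged precisely so as to need only local connectedness of $S^{an}$ \cite{GR}: it takes $V$ merely connected, chooses the sheets $W_{\lambda}$ of $f$ over an evenly covered product $U\times V$ with $y_0\in U$, defines $\varepsilon_s(\lambda)=f^{-1}(y_0,s)\cap W_{\lambda}$, and proves constancy of the monodromy by lifting the homotopy $F(t,s)=(\alpha(t),s)$ with the covering homotopy property \cite{Spa}; the connectedness of $\tilde{F}_{\lambda}(\{1\}\times V)$ then pins all endpoints inside the single sheet $W_{\lambda g}$. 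So your route buys a cleaner global statement (the covering is a product in the $S$-direction) at the cost of a stronger topological input; to make it airtight, either cite local contractibility of analytic spaces, or replace that step by the paper's homotopy-lifting argument, which works for any connected $V$.
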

\begin{proof}
Let $D=\{b_1,\ldots,b_n\}$ be the branch locus of $f_{s_{0}}$, let 
$y_0\in    Y\setminus    D$    and    let 
$\varepsilon   :\Lambda   \to   f_{s_{0}}^{-1}(y_0)$,    $m:\pi_1(Y\setminus 
D,y_0)\tto  G$  satisfy  $\varepsilon  (\lambda  m([\alpha]))  =  \varepsilon 
(\lambda )\alpha$ for $\forall \lambda \in \Lambda$ and 
$\forall [\alpha] \in \pi_1(Y\setminus D,y_0)$. Let $B\subset Y\times S$  be 
the branch locus of $f$. The map $\beta:S\to Y^{(n)}_{\ast}$ defined by 
$\beta(s)=B_{s}$ is a morphism \cite[Proposition~2.6]{K7}, so 
$|\beta^{an}|: |S^{an}|\to |(Y^{(n)}_{\ast})^{an}|$ is a continuous map. 
Let $N_D(U_1,\ldots,U_n)$ be a neighborhood of $D=\beta(s_{0})$ as in 
\S\ref{2.16}. Let $V_{1}$ be a neighborhood of $s_{0}$ in  $|S^{an}|$  such 
that $\beta(V_{1})\subset N_D(U_1,\ldots,U_n)$. The restriction 
$|X^{an}|\setminus f^{-1}(B)\overset{|f^{an}|}{\lto} 
|(Y\times S)^{an}|\setminus B$ is a topological covering map
\cite[Proposition~2.6]{K7} and $(Y\times S)^{an} \cong Y^{an}\times S^{an}$ 
\cite[\S1.2]{SGA1}. The complex space $S^{an}$ is locally connected 
(cf. \cite[Ch.9 \S3 n.1]{GR}), therefore there is an embedded open disk 
$U\subset Y\setminus \cup_{i=1}^{n}\overline{U}_{i}$, $y_{0}\in U$ and
a connected neighborhood $V$ of $s_{0}$, such that $V\subset V_{1}$, 
$U\times V\subset  Y\times  S\setminus  B$  and  $f^{-1}(U\times  V)$  is  a 
disjoint union of connected open sets homeomorphic to  $U\times  V$.  Denote 
these open sets by $W_{\lambda}$, $\lambda \in \Lambda$, where 
$W_{\lambda}\ni \varepsilon(\lambda)$.  For  every  $s\in  V$  we  define  a 
bijection $\varepsilon_{s} :\Lambda \to f^{-1}(y_0,s)$ and an epimorphism 
$m_{s}:\pi_1(Y\setminus B_{s},y_0)\to G$ by
\begin{equation}\label{e2.18a}
\varepsilon_{s}(\lambda) = f^{-1}(y_{0},s)\cap W_{\lambda}, \quad
m_{s} = m(\beta(s))
\end{equation}
(cf. \S\ref{2.16}). We claim that $(\varepsilon_{s},m_{s})$ satisfies 
Condition~(i) of Proposition~\ref{2.9} for $f_{s}:X_{s}\to Y$ (we identify 
$Y\times \{s\}$ with $Y$). Let $\alpha$ be a loop in 
$Y\setminus \cup_{i=1}^nU_i$ based at $y_{0}$. Let  $g=m([\alpha]_{D})$.  We 
claim that $\varepsilon_{s}(\lambda)\alpha =\varepsilon_{s}(\lambda g)$ for 
$\forall \lambda \in \Lambda$ and $\forall s\in V$. Consider the homotopy 
$F:[0,1]\times V\to Y^{an}\times S^{an}\setminus B$ defined by 
$F(t,s) = (\alpha(t),s)$. Let $\lambda \in \Lambda$. The restriction 
$f|_{W_{\lambda}}:W_{\lambda}\to U\times  V$  is  a  homeomorphism.  Let  us 
denote by $\tilde{F}_{\lambda}(0): \{0\}\times V\to 
|X^{an}\setminus f^{-1}(B)|$ the composition 
$(0,s)\overset{F}{\mapsto} (y_{0},s)\mapsto 
(f|_{W_{\lambda}})^{-1}(y_{0},s)  =  \varepsilon_{s}(\lambda)$.   By   the 
covering homotopy property (cf.  \cite[Ch.2  \S3  Th.3]{Spa})  there  is  a 
unique continuous lifting $\tilde{F}_{\lambda}$ of $F$ which extends 
$\tilde{F}_{\lambda}(0)$
\[
\xymatrix{
              &X^{an}\setminus f^{-1}(B)\ar[d]\\
[0,1]\times V\ar[ru]^-{\tilde{F}_{\lambda}}\ar[r]^-{F}&  Y^{an}\times 
S^{an}\setminus B.
}
\]
One has $\tilde{F}_{\lambda}(1,s_{0}) = \varepsilon(\lambda)\alpha 
= \varepsilon(\lambda g)\in W_{\lambda g}$. The image 
$\tilde{F}_{\lambda}(\{1\}\times V)$ is a connected component of 
$f^{-1}(\{y_0\}\times V)$, so 
$\tilde{F}_{\lambda}(\{1\}\times V)\subset W_{\lambda g}$. This implies that 
$\varepsilon_{s}(\lambda)\alpha = \varepsilon_{s}(\lambda g)$ for 
$\forall s\in V$. Let $s\in V$. By \S\ref{2.16} we have 
\[
g = m([\alpha]_{D}) = m(\beta (s))([\alpha]_{\beta(s)}) = 
m_{s}([\alpha]_{B_{s}}). 
\]
Every homotopy class of 
$\pi_1(Y\setminus  B_{s},y_0)$  may  be  represented  by  a  loop   $\alpha$ 
 in $Y\setminus \cup_{i=1}^nU_i$ based at $y_{0}$, so 
\[
\varepsilon_{s} (\lambda m_{s}([\alpha]_{B_{s}})) = 
\varepsilon_{s} (\lambda )\alpha \quad \text{for}\:
\forall \lambda \in \Lambda\; \text{and}\;
\forall [\alpha] \in \pi_1(Y\setminus B_{s},y_0).
\]
\end{proof}
\begin{proposition}\label{2.20}
Let $f:X\to Y\times S$ be a smooth , proper family  of   covers  of  $Y$ 
branched in $n$ points. Suppose that $S$ is connected. Suppose that 
$f_{s_{0}}:X_{s_{0}}\to Y$ is a $(\Lambda,G)$-cover for some $s_{0}\in S$. 
Then $f:X\to Y\times S$ is a smooth, proper family of $(\Lambda,G)$-covers of $Y$.
\end{proposition}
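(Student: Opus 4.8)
The plan is to combine Lemma~\ref{2.17} with the connectedness of $S$ to upgrade the local statement to a global one. First I would observe that for \emph{every} $s\in S$ the cover $f_s:X_s\to Y$ is already a $(\Lambda,G_s)$-cover for some transitive subgroup $G_s\subset S(\Lambda)$, uniquely determined up to conjugacy. Indeed, since $S$ is connected the degree $d=\rank f_\ast\mathcal{O}_X$ is constant, so $|f_s^{-1}(y_0)|=d=|\Lambda|$ for a base point $y_0$ in the complement of the branch locus $B_s$ of $f_s$; the fiber $X_s$ is irreducible, hence $X_s\setminus f_s^{-1}(B_s)$ is connected and the image of the monodromy in $S(f_s^{-1}(y_0))$ is transitive. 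Transporting this image to $S(\Lambda)$ by any bijection $\varepsilon:\Lambda\to f_s^{-1}(y_0)$ yields a transitive subgroup $G_s$, and the co-restricted monodromy $m_s:\pi_1(Y\setminus B_s,y_0)\tto G_s$ satisfies Condition~(i) of Proposition~\ref{2.9}. By Proposition~\ref{2.14} the set of subgroups $G$ for which $f_s$ is a $(\Lambda,G)$-cover is precisely the conjugacy class of $G_s$ in $S(\Lambda)$, so $G_s$ is well defined up to conjugacy.

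Next I would partition $|S^{an}|$ according to the conjugacy class of $G_s$. For each conjugacy class $C$ of transitive subgroups of $S(\Lambda)$ set $S_C=\{s\in S\mid G_s\in C\}$; these subsets are pairwise disjoint and cover $|S^{an}|$ by the previous paragraph. I claim each $S_C$ is open. If $s_0\in S_C$, choose $H\in C$ with $f_{s_0}$ a $(\Lambda,H)$-cover; Lemma~\ref{2.17} then provides a neighborhood $V\subset|S^{an}|$ of $s_0$ on which every $f_s$ is a $(\Lambda,H)$-cover, whence $G_s$ is conjugate to $H$, i.e. $G_s\in C$, for all $s\in V$. Thus $V\subset S_C$, proving openness.

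Finally, since $S$ is a connected complex algebraic variety its analytification $|S^{an}|$ is connected. The collection $\{S_C\}$ is therefore an open partition of a connected space, so exactly one member is nonempty and equals all of $|S^{an}|$. As $s_0$ lies in $S_{C_0}$, where $C_0$ is the conjugacy class of $G$, we conclude $S_{C_0}=S$, i.e. $f_s:X_s\to Y$ is a $(\Lambda,G)$-cover for every $s\in S$. Together with the hypotheses of Definition~\ref{2.15}(i), which hold by assumption, this shows that $f:X\to Y\times S$ is a smooth, proper family of $(\Lambda,G)$-covers of $Y$. The only genuine content beyond bookkeeping is the local constancy of the conjugacy class of the monodromy group, and this is exactly Lemma~\ref{2.17}; the point I must be careful about is that Lemma~\ref{2.17} is applied to each fiber with \emph{its own} monodromy group $G_s$, which is legitimate precisely because every fiber is a $(\Lambda,G_s)$-cover for a transitive $G_s$. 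The connectedness argument then converts this open condition into the desired global one.
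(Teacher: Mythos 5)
Your proof is correct and follows essentially the same route as the paper: openness of the locus where the fibers are $(\Lambda,H)$-covers via Lemma~\ref{2.17}, the conjugacy dichotomy from Proposition~\ref{2.14}, and connectedness of $|S^{an}|$ to conclude. The only difference is cosmetic: you index the partition by conjugacy classes and spell out why each fiber is a $(\Lambda,G_s)$-cover for some transitive $G_s$ (a step the paper dismisses as clear), whereas the paper indexes by subgroups $H$ and identifies $S^{H}=S^{K}$ for conjugate $H,K$.
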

\begin{proof}
Let $d=|\Lambda|$. Every cover $f_{s}:X_{s}\to Y$, $s\in S$, is of degree $d$ 
since $S$ is connected.
We have to prove that $f_{s}:X_{s}\to Y$ is a $(\Lambda,G)$-cover for 
$\forall s\in S$. For every transitive subgroup $H\subset S(\Lambda)$ let 
$S^{H}$ be the set of points $s\in S$ such that $f_{s}:X_{s}\to Y$ is a 
$(\Lambda,H)$-cover. It is clear that $S=\cup_{H}S^{H}$. By Lemma~\ref{2.17} 
$S^{H}$   is   an   open   set   in   $|S^{an}|$   for   every    $H$.    By 
Proposition~\ref{2.14}, given  two  transitive  subgroups  $H$  and  $K$  of 
$S(\Lambda)$, the following alternative holds: if $H$ and $K$ are conjugated 
in $S(\Lambda)$, then $S^{H}=S^{K}$; otherwise $S^{H}\cap S^{K}= \emptyset$. 
The topological space $|S^{an}|$ is connected 
(cf. \cite[Corollaire~2.6]{SGA1}), therefore $S=S^{G}$.
\end{proof}
We denote by $Var_{\mathbb{C}}$ the category of algebraic varieties and by 
$(Sets)$ the category of sets.
\begin{definition}\label{2.22}
Let    $S$     be     an     algebraic     variety.     We     denote     by 
$\mathcal{H}^{\Lambda,G}_{Y,n}(S)$ the set of
equivalence classes $[f:X\to Y\times S]$ of smooth, proper families of 
$(\Lambda,G)$-covers of $Y$ branched in $n$ points, parameterized by $S$ 
(cf. Definition~\ref{2.15}). We denote by 
$\mathcal{H}^{\Lambda,G}_{(Y,y_0),n}(S)$ the  set of
equivalence classes 
$[f:X\to Y\times S,\eta :S\to X]$
of  smooth, proper 
families of pointed $(\Lambda,G)$-covers of $(Y,y_0)$ branched in  
$n$  points parameterized by $S$ 
(cf. Definition~\ref{2.15a})
\end{definition}
\begin{block}\label{2.22a}
Let $u: T\to S$ be a morphism of algebraic varieties. Given  a  smooth, proper 
morphism  $X\to  S$  of  reduced,  separated  schemes  of  finite  type  over 
$\mathbb{C}$, the pullback morphism $X_{T}=X\times_{S}T\to T$ is  smooth  and 
proper. The scheme $X\times_{S}T$ is reduced since $T$ is reduced (cf. 
\cite[p.184]{Ma2}.  Hence  $X\times_{S}T$  is  isomorphic  to   the   closed 
algebraic  subvariety  of  $X\times  T$  whose  set   of   points   is   the 
set-theoretical fiber product 
$X(\mathbb{C})\times_{S(\mathbb{C})}T(\mathbb{C})$.
\par
Let $f:X\to Y\times S$ be a smooth, proper family of covers of $Y$ branched  in  $n$ 
points. Let $u:T\to S$ be a morphism of algebraic varieties. Let 
$X_{T}\to T$ be the pullback  of  $\pi_{2}\circ  f:X\to  S$.  By  the  above 
argument $X_{T} = \{(x,t)|\pi_{2}\circ f(x)=u(t)\} \subset X\times T$. Let 
$f_{T}:X_{T}\to Y\times T$ and $h:X_{T}\to X$ be the morphisms defined 
respectively by $f_{T}(x,t)=(\pi_{1}\circ f(x),t)$ and $h(x,t)=x$.  One  has 
the following commutative diagram of morphisms:
\begin{equation}\label{e2.22a}
\xymatrix{
X_T
\ar[r]^-{f_T}\ar[d]_-{h}&Y\times T\ar[d]^-{id\times u}
\ar[r]^-{p_2}
&T\ar[d]^-{u}\\
X\ar[r]^-{f}&Y\times S\ar[r]^-{\pi_2}&S
}
\end{equation}
in which the composed diagram and the right square are Cartesian.  Therefore 
the left square is Cartesian as well \cite[Proposition~4.16]{GW}. 
\par
Given a smooth, proper family of $(\Lambda,G)$-covers $f:X\to Y\times S$ branched in 
$n$ points the pullback morphism $f_{T}:X_{T}\to Y\times T$ is a smooth, proper 
family of $(\Lambda,G)$-covers of $Y$ branched in $n$ points. In fact 
 for $\forall t\in T$ the cover 
$(f_{T})_{t}:(X_{T})_{t}\to Y$ is equivalent to the cover 
$f_{u(t)}:X_{u(t)}\to Y$, so the  conditions  of  Definition~\ref{2.15}  are 
satisfied. Furthermore  the  pullbacks  of  equivalent  families   are 
equivalent. This defines a moduli functor 
$\mathcal{H}^{\Lambda,G}_{Y,n}: Var_{\mathbb{C}}\to (Sets)$.
\par
Given a smooth, proper family $(f:X\to Y\times S,\eta :S\to X)$ of
pointed $(\Lambda,G)$-covers of $(Y,y_0)$  branched  in  $n$  points  and  a 
morphism $u:T\to S$ one defines the pullback family as 
$(f_{T}:X_{T}\to Y\times T,\eta_{T} :T\to X_{T})$, where $\eta_{T}$  is  the 
morphism defined by $\eta_{T}(t)=(\eta(u(t)),t)$. This is a smooth, proper family of 
pointed $(\Lambda,G)$-covers of $(Y,y_0)$. This defines, as above, a  moduli 
functor 
$\mathcal{H}^{\Lambda,G}_{(Y,y_0),n}: 
Var_{\mathbb{C}}\to (Sets)$.
\end{block}
\begin{block}\label{3.43a}
A morphism $p:\mathcal{C}\to Y\times  S$  of  algebraic  varieties  is  called 
a smooth, proper family of $G$-covers of $Y$ branched in $n$ points if:
\begin{enumerate}
\item
$\pi_{2}\circ  p:\mathcal{C}\to  S$  is  a  proper,  smooth  morphism   with 
irreducible fibers;
\item
$G$ acts on $\mathcal{C}$ on the left by automorphisms, 
$p:\mathcal{C}\to Y\times S$ is $G$-invariant and for $\forall s\in  S$  the 
action of $G$ on $\mathcal{C}_{s}$ is faithful, 
$\overline{p}_{s}:\mathcal{C}_{s}/G \to Y\times \{s\}$ is an isomorphism and 
$p_{s}:\mathcal{C}_{s}\to Y\times \{s\}$ is branched in $n$ points.
\end{enumerate}
Let $y_{0}\in Y$. If furthermore $p_{s}:\mathcal{C}_{s}\to Y\times \{s\}$ is 
unbranched at $(y_{0},s)$ for $\forall s\in S$ and there exists a morphism 
$\zeta :S\to \mathcal{C}$ such that $p\circ \zeta (s)=(y_{0},s)$ for 
$\forall s\in S$, then  $(p:\mathcal{C}\to  Y\times  S,  \zeta)$  is  called 
a smooth, proper family of pointed $G$-covers of $(Y,y_{0})$ branched in $n$ points 
(cf. \cite[Section~3]{K7}).
\end{block}
In the next two propositions we extend the construction of 
Lemma~\ref{2.6b} to families.
\begin{proposition}\label{3.45}
Let $p:\mathcal{C}\to Y\times S$ be a smooth, proper family of $G$-covers of 
$Y$ branched in $n$ points.
\par
Consider the left action of $G$ on $\Lambda \times \mathcal{C}$ defined by 
$g(\lambda,z)=(\lambda g^{-1},gz)$. Let 
$X=(\Lambda \times \mathcal{C})/G := \Lambda \times^G \mathcal{C}$,
let $\pi:\Lambda \times \mathcal{C}\to X$ be the quotient map and let 
$f:X\to Y\times S$ be the map $f(\pi(\lambda,z))=p(z)$. Then 
$f:X\to Y\times S$ is a smooth, proper family of $(\Lambda,G)$-covers of $Y$. The 
branch loci of $p$ and $f$ coincide.
\par
Let $X_{1}=\mathcal{C}/G(\lambda_{0})$, let $\rho:\mathcal{C} \to X_{1}$  be 
the quotient map and let $f_{1}:X_{1}\to Y\times S$ be the map 
$f_{1}(\rho(z))=p(z)$. Then $f_{1}:X_{1}\to Y\times S$ is a smooth, proper family 
of covers of $Y$ equivalent to $f:X\to Y\times S$. 
\end{proposition}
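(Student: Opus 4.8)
The plan is to reduce everything to the fiberwise statement of Lemma~\ref{2.6b} together with a relative version of the isomorphism $\Lambda\times^{G}C\cong C/G(\lambda_0)$ established in its proof. First I would record that the quotients exist as algebraic varieties with finite quotient morphisms: working locally over an affine open of $S$ the total space $\mathcal{C}$ is quasi-projective, hence so is the finite disjoint union $\Lambda\times\mathcal{C}$, and since $G$ is finite the geometric quotients $X=(\Lambda\times\mathcal{C})/G$ and $X_{1}=\mathcal{C}/G(\lambda_0)$ exist, are reduced (we work in characteristic zero), and the quotient maps $\pi$ and $\rho$ are finite and surjective; these local quotients glue over $S$. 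Because $p$ is $G$-invariant, the morphism $(\lambda,z)\mapsto p(z)$ is constant on $G$-orbits and descends to $f:X\to Y\times S$, and likewise $p$ is $G(\lambda_0)$-invariant and descends to $f_{1}:X_{1}\to Y\times S$.

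Next I would globalize the argument of Lemma~\ref{2.6b}. Choosing, as in \eqref{e2.7a}, representatives $a_{\lambda}\in G$ with $\lambda=\lambda_0a_{\lambda}$ and $a_{\lambda_0}=1$, the $S$-morphisms $\mathcal{C}\to X$, $z\mapsto\pi(\lambda_0,z)$, and $\Lambda\times\mathcal{C}\to X_{1}$, $(\lambda,z)\mapsto\rho(a_{\lambda}z)$, are respectively $G(\lambda_0)$-invariant and $G$-invariant, so they descend to mutually inverse $S$-isomorphisms giving $X\cong X_{1}$ over $Y\times S$; in particular $f$ factors through this isomorphism and $f_{1}$, so once both are families of covers they are equivalent in the sense of Definition~\ref{2.15}(i). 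Using this identification I would then read off the family properties from $X_{1}=\mathcal{C}/G(\lambda_0)$: properness of $\pi_{2}\circ f_{1}$ follows from properness of $\pi_{2}\circ p$ together with the fact that $\rho$ is finite and surjective; the fibers $X_{1,s}=\mathcal{C}_{s}/G(\lambda_0)$ are smooth, irreducible projective curves since $\mathcal{C}_{s}$ is; and, applying Lemma~\ref{2.6b} fiber by fiber to the $G$-cover $p_{s}:\mathcal{C}_{s}\to Y$, each $f_{s}:X_{s}\to Y$ is a $(\Lambda,G)$-cover whose branch locus equals that of $p_{s}$. Hence the branch loci of $f$ and $p$ coincide.

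The remaining, and main, point is to show that $\pi_{2}\circ f_{1}:X_{1}\to S$ is \emph{smooth}, and not merely proper with smooth fibers, since the base $S$ need not be regular and flatness is not automatic. I would verify this analytically and locally on $|S^{an}|$. Away from the branch locus $B\subset Y\times S$ of $p$ the group $G$ acts freely on $\mathcal{C}$, so $\Lambda\times\mathcal{C}\to X$ is an honest $|\Lambda|$-sheeted topological covering over $(Y\times S)^{an}\setminus B$ and $X\to S$ is there a smooth family. Near a point of the ramification locus I would use that, since the family of $G$-covers is branched in $n$ points with branch-locus morphism $\beta:S\to Y^{(n)}_{\ast}$ landing in reduced divisors (cf. \S\ref{2.16} and \cite{K7}), the local monodromy at each branch point is cyclic; consequently, in suitable analytic coordinates $(\zeta,s)$ centered along the smooth ramification locus the relevant stabilizer in $G(\lambda_0)$ acts by $\zeta\mapsto\omega\zeta$, with $\omega$ a primitive $e$-th root of unity and $e$ the ramification index, and trivially on $s$. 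The quotient is then described by the coordinate $w=\zeta^{e}$, so that $X_{1}$ is analytically isomorphic over $S$ to a product of an open disk with a neighborhood $V\subset|S^{an}|$ of $s$. This yields smoothness of $X_{1}\to S$, and with it the conclusion that $f:X\to Y\times S$ is a smooth, proper family of $(\Lambda,G)$-covers of $Y$ (Definition~\ref{2.15}(ii)) equivalent to $f_{1}:X_{1}\to Y\times S$. The crux of the argument is precisely this local analytic normal form at the ramification points, which is what forces smoothness over a base that need not be regular.
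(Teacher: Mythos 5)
Your proposal agrees with the paper's proof on all the ``soft'' parts: the existence of the quotients with finite quotient morphisms, the descent of $p$ to $f$ and $f_{1}$, the mutually inverse $S$-morphisms built from the representatives $a_{\lambda}$ exactly as in \eqref{e2.7a} (giving the equivalence $X\cong X_{1}$ over $Y\times S$), properness, and the identification of the fibers with $(\Lambda\times\mathcal{C}_{s})/G$ resp.\ $\mathcal{C}_{s}/G(\lambda_{0})$ so that Lemma~\ref{2.6b} applies fiberwise. (For that identification the paper invokes that formation of quotients by $G$ commutes with base change, \cite[Prop. A.7.1.3]{KM}; you use this silently.) You also correctly isolate the crux: smoothness of $X_{1}\to S$ over a base that need not be regular, which does not follow from properness plus smooth fibers.

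At exactly this point, however, you diverge from the paper and your argument has a genuine gap. The paper's route is algebraic: the scheme-theoretic fibers of $\rho$ coincide with those of the fiberwise quotients $\rho_{s}:\mathcal{C}_{s}\to\mathcal{C}_{s}/G(\lambda_{0})$, so $\dim_{\mathbb{C}}H^{0}(\mathcal{O}_{\rho^{-1}(x)})$ is constant $=|G|/|\Lambda|$; by \cite[Ch.~2, \S5, Lemma~1]{M3} the sheaf $\rho_{\ast}\mathcal{O}_{\mathcal{C}}$ is locally free, so $\rho$ is faithfully flat, flatness of $\pi_{2}\circ p$ then descends along $\rho$ to $\pi_{2}\circ f_{1}$ \cite[p.46]{Ma2}, and flatness plus smooth closed fibers gives smoothness \cite{A-K}. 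Your route instead asserts an analytic normal form $(\zeta,s)\mapsto(\zeta^{e},s)$ near the ramification locus, justified by the fiberwise cyclicity of the local monodromy. That justification does not deliver the normal form: the existence of relative coordinates $(\zeta,s)$ in which the stabilizer acts by $\zeta\mapsto\omega\zeta$ and trivially on $s$ is precisely what must be proven for an arbitrary family. Concretely, you would need (a) that near $w$ the ramification locus, i.e.\ the fixed locus of the stabilizer, is a section of $\mathcal{C}\to S$ (your phrase ``the smooth ramification locus'' presupposes this; it is true, e.g.\ because in characteristic zero the fixed locus of a finite group acting on a scheme smooth over $S$ is smooth over $S$, and it meets $\mathcal{C}_{s_{0}}$ in a reduced point), and (b) a relative linearization of the action along that section, obtained by averaging a relative coordinate vanishing on it against the character of the stabilizer. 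Neither step appears in your text, and without them the normal form --- hence smoothness --- is not established; you also tacitly use that $X_{1}^{an}\cong\mathcal{C}^{an}/G(\lambda_{0})$ (Proposition~\ref{3.1a}) and that algebraic smoothness can be checked on analytifications. If (a) and (b) are supplied, your analytic argument does work, and it would in effect extend to arbitrary families the local description that the paper establishes only for the universal family (Proposition~\ref{3.8}); but as written the decisive step is asserted rather than proven, whereas the paper's flatness-descent argument is complete.
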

\begin{proof}
The group $G$ acts by covering automorphisms of the finite morphism 
$\Lambda \times \mathcal{C} \to Y\times S$ defined by 
$(\lambda,z)\mapsto p(z)$. Therefore 
$X=(\Lambda \times \mathcal{C})/G$ has a structure of quotient algebraic 
variety and the maps $\pi:\Lambda \times \mathcal{C}\to X$ and 
$f:X\to Y\times S$ are finite morphisms \cite[Ch.III Prop.~19]{Se}.
 The morphism $\pi_{2}\circ f:X\to S$ is a composition 
of a proper and a finite morphism, so it is proper. For every 
$s\in S$ its scheme-theoretical fiber $X_{s}$  
is isomorphic to the quotient 
$(\Lambda \times \mathcal{C}_{s})/G$, since  formation  of  quotients  by  $G$ 
commutes with base change (cf. \cite[Prop. A.7.1.3]{KM}). Therefore $X_{s}$ is smooth 
and irreducible. Let $H=G(\lambda_{0})$. Similar statements hold for 
$X_{1}=\mathcal{C}/H$, $\rho:\mathcal{C}\to X_{1}$ and 
$f_{1}:X_{1}\to Y\times S$. The two families of covers of $Y$,
$f:X\to Y\times S$ and $f_{1}:X_{1}\to Y\times S$
 are equivalent. In fact the morphisms 
$\mathcal{C}\to \Lambda \times^G \mathcal{C} = X$ and 
$\Lambda \times \mathcal{C}\to \mathcal{C}/H = X_{1}$, defined as in 
\eqref{e2.7a}, induce morphisms $X_{1}\to X$ and $X\to X_{1}$ over 
$Y\times S$, given respectively by $\rho(z)\mapsto \pi(\lambda_{0},z)$ and 
$\pi(\lambda,z)=\rho(a_{\lambda}z)$, which are inverse to each other. 
We claim that $\pi_{2}\circ f$ and $\pi_{2}\circ f_{1}$ are flat morphisms. 
Let $x\in X_{1}$, let $\pi_{2}\circ f_{1}(x) = s$ and let $\rho^{-1}(x)$  be 
the scheme-theoretical fiber of $\rho$. The restriction 
$\rho_{s}:\mathcal{C}_{s}\to (X_{1})_{s}$ is a surjective morphism of 
smooth, irreducible, projective curves, so 
$\dim_{\mathbb{C}}H^{0}(\mathcal{O}_{\rho_{s}^{-1}(x)}) = |H| =
|G|/|\Lambda|$. The schemes $\rho^{-1}(x)$ and $\rho_{s}^{-1}(x)$ coincide,
so $\dim_{\mathbb{C}}H^{0}(\mathcal{O}_{\rho^{-1}(x)}) = |G|/|\Lambda|$ for 
every $x\in X_{1}$. This implies that the coherent sheaf 
$\rho_{\ast}\mathcal{O}_{\mathcal{C}}$ is locally free 
(cf. \cite[Ch.~2, \S5, Lemma~1]{M3}), therefore  $\rho:\mathcal{C}\to  X_{1}$ 
is flat. It is moreover faithfully flat since $\rho(\mathcal{C})  =  X_{1}$. 
By hypothesis $\pi_{2}\circ p:\mathcal{C}\to S$ is flat, therefore 
$\pi_{2}\circ f_{1}:X_{1}\to S$ is flat (cf. \cite[p.46]{Ma2}).  As  we  saw 
above the scheme-theoretical fiber $(X_{1})_{s}$ over every (closed) point 
$s\in S$ is smooth. Therefore $\pi_{2}\circ f_{1}:X_{1}\to S$ is smooth at 
every closed point of $X_{1}$ (cf. \cite[Ch.VII, Thm.(1.8)]{A-K}. The  points 
of the scheme $X_{1}$ at which $\pi_{2}\circ f_{1}$ is smooth form  an  open 
subset. This open subset contains every closed point of  $X_{1}$,  therefore 
it coincides with $X_{1}$ (cf. \cite[Prop.~3.35]{GW}). The smoothness of 
$\pi_{2}\circ f_{1}:X_{1}\to S$ implies the smoothness of 
$\pi_{2}\circ f:X\to S$ since there is an $Y\times S$-isomorphism 
between $X$ and $X_{1}$. For every $s\in S$ one has
$X_{s} \cong (\Lambda \times \mathcal{C}_{s})/G$, so 
the morphism $f_{s}:X_{s}\to  Y$, 
is a $(\Lambda,G)$-cover of $Y$ whose branch locus coincides with that of 
$p_{s}:\mathcal{C}_{s}\to Y$ (cf. Lemma~\ref{2.6b}). Therefore 
$f:X\to Y\times S$  is  a  smooth, proper  family  of  $(\Lambda,G)$-covers  of  $Y$ 
branched in $n$ points and it is equivalent to $f_{1}:X_{1}\to Y\times S$ as 
we saw above.
\end{proof}
\begin{proposition}\label{3.48}
Let $y_{0}\in Y$. Let  $(p:\mathcal{C}\to  Y\times  S,\zeta)$  be  a  smooth, proper 
family of pointed $G$-covers of $(Y,y_0)$ branched in $n$ points. 
\par
Let $X=\Lambda \times^{G}\mathcal{C}$, $f:X\to Y\times S$ be as in 
Proposition~\ref{3.45}. Let $\eta:S\to X$ be the morphism defined by 
$\eta(s)=\pi(\lambda_0,\zeta(s))$. Then $(f:X\to Y\times S,\eta)$ is a 
smooth, proper family of pointed $(\Lambda,G)$-covers of $(Y,y_0)$. The branch loci
of $p$ and $f$ coincide. 
For every $s\in S$, if $(D_s,m_s)$ is the monodromy invariant of 
$(p_s:\mathcal{C}_s\to Y,\zeta(s))$, then $(D_s,m_s^{N(\lambda_0)})$ is the monodromy invariant of 
the pointed $(\Lambda,G)$-cover $(f_s:X_s\to Y,\eta(s))$ of $(Y,y_0)$.
\par
Let $X_{1}=\mathcal{C}/G(\lambda_0)$, $f_{1}:X_{1}\to Y\times S$ be as in 
Proposition~\ref{3.45}. Let $\eta_{1}:S\to X_{1}$ be the morphism defined 
by $\eta_{1}=\rho(\zeta(s))$. Then $(f_{1}:X_{1}\to Y\times S,\eta_{1})$  is 
a smooth, proper family of pointed covers of $(Y,y_0)$ equivalent to 
$(f:X\to Y\times S,\eta)$. 
\end{proposition}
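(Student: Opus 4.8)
The plan is to observe that the bulk of the assertion---that $f\colon X\to Y\times S$ is a smooth, proper family of $(\Lambda,G)$-covers of $Y$, that $f_{1}\colon X_{1}\to Y\times S$ is a smooth, proper family of covers of $Y$, that the two are equivalent via a $Y\times S$-isomorphism, and that the branch loci of $p$ and $f$ coincide---is already established in Proposition~\ref{3.45}. What remains is therefore purely a matter of incorporating the marked sections $\eta,\eta_{1}$ and of computing the fiberwise monodromy invariant; the first is handled by elementary functoriality, and the second is reduced fiber by fiber to Lemma~\ref{2.6b}.

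First I would check that $\eta$ is a genuine morphism and a valid marking. Since $\Lambda$ is a finite set, $\Lambda\times\mathcal{C}=\bigsqcup_{\lambda\in\Lambda}\mathcal{C}$, so the inclusion $\iota_{\lambda_{0}}\colon\mathcal{C}\to\Lambda\times\mathcal{C}$ as the $\lambda_{0}$-component is a morphism; hence $\eta=\pi\circ\iota_{\lambda_{0}}\circ\zeta$ is a morphism $S\to X$. From $f(\pi(\lambda,z))=p(z)$ and $p\circ\zeta(s)=(y_{0},s)$ one gets $f\circ\eta(s)=p\circ\zeta(s)=(y_{0},s)$, so $\eta(s)\in f^{-1}(y_{0},s)$. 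Because the branch loci of $p$ and $f$ coincide (Proposition~\ref{3.45}) and each $p_{s}$ is unbranched at $(y_{0},s)$, each $f_{s}$ is unbranched at $(y_{0},s)$ as well. Together with Proposition~\ref{3.45} this shows $(f\colon X\to Y\times S,\eta)$ is a smooth, proper family of pointed $(\Lambda,G)$-covers of $(Y,y_{0})$ in the sense of Definition~\ref{2.15a}. The same argument, with $\eta_{1}=\rho\circ\zeta$ in place of $\eta$, shows $(f_{1}\colon X_{1}\to Y\times S,\eta_{1})$ is a smooth, proper family of pointed covers of $(Y,y_{0})$.

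Next I would pin down the fiberwise monodromy invariant. Fix $s\in S$. By Proposition~\ref{3.45}---which uses that formation of the quotient by $G$ commutes with base change---there is a canonical identification $X_{s}\cong\Lambda\times^{G}\mathcal{C}_{s}$ under which the restriction of $\pi$ is the quotient map $\Lambda\times\mathcal{C}_{s}\to X_{s}$, the morphism $f_{s}$ is the map induced by $p_{s}$, and $\eta(s)=\pi(\lambda_{0},\zeta(s))$. Thus the fiber datum $(f_{s}\colon X_{s}\to Y,\eta(s))$ is precisely the output of the construction of Lemma~\ref{2.6b} applied to the pointed $G$-cover $(p_{s}\colon\mathcal{C}_{s}\to Y,\zeta(s))$, whose monodromy invariant is $(D_{s},m_{s})$ (cf. \S\ref{2.5b}). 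Invoking Lemma~\ref{2.6b} verbatim for this fiber yields that $(f_{s}\colon X_{s}\to Y,\eta(s))$ is a pointed $(\Lambda,G)$-cover of $(Y,y_{0})$ with monodromy invariant $(D_{s},m_{s}^{N(\lambda_{0})})$, which is the fiberwise claim.

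Finally I would establish the equivalence of the two pointed families. Proposition~\ref{3.45} supplies a $Y\times S$-isomorphism $h\colon X_{1}\to X$, acting by $\rho(z)\mapsto\pi(\lambda_{0},z)$, with $f_{1}=f\circ h$. Evaluating on the marking gives $h(\eta_{1}(s))=h(\rho(\zeta(s)))=\pi(\lambda_{0},\zeta(s))=\eta(s)$, i.e. $\eta=h\circ\eta_{1}$; by Definition~\ref{2.15a}(i) the pointed families $(f_{1},\eta_{1})$ and $(f,\eta)$ are therefore equivalent. The only genuine subtlety I anticipate is the third step: one must be sure that restricting the global quotient map $\pi$ and the global marking $\eta$ to the fiber over $s$ really reproduces the fiberwise construction of Lemma~\ref{2.6b}, i.e. that the base-change identification $X_{s}\cong\Lambda\times^{G}\mathcal{C}_{s}$ is compatible with the restricted quotient map and carries $\eta(s)$ to $\pi(\lambda_{0},\zeta(s))$. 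This compatibility is exactly what the base-change statement for quotients cited in Proposition~\ref{3.45} provides, so once that is invoked the remaining verifications are purely formal.
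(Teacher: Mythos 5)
Your proposal is correct and follows essentially the same route as the paper's proof: both reduce the bulk of the claims to Proposition~\ref{3.45}, verify $f(\eta(s))=p(\zeta(s))=(y_{0},s)$ to obtain the pointed family, identify each scheme-theoretic fiber $X_{s}$ with $(\Lambda\times\mathcal{C}_{s})/G$ and invoke Lemma~\ref{2.6b} for the monodromy invariant, and use the $Y\times S$-isomorphism $\rho(z)\mapsto\pi(\lambda_{0},z)$ together with its compatibility with the sections to get the equivalence of the two pointed families. Your additional verifications (that $\eta$ is a morphism via the decomposition $\Lambda\times\mathcal{C}=\bigsqcup_{\lambda}\mathcal{C}$, and that the base-change identification respects the quotient maps and markings) are details the paper leaves implicit, but they do not change the argument.
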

\begin{proof}
One has $f(\eta(s))=p(\zeta(s))=(y_{0},s)$, so 
$(f:X\to Y\times S, \eta)$ is a smooth, proper family of pointed 
$(\Lambda,G)$-covers of $(Y,y_0)$. For every $s\in S$ the scheme-theoretical 
fiber $X_s$ is isomorphic to $(\Lambda \times \mathcal{C}_s)/G$, so the stated 
relation between the monodromy invariants holds by Lemma~\ref{2.6b}.
\par
One has $f_{1}(\eta_{1}(s))=p(\zeta(s))=(y_{0},s)$ for $\forall s\in S$. 
The $Y\times S$-isomorphism $X_{1}\to X$ given by 
$\rho(z)\mapsto \pi(\lambda_0,z)$ transforms $\eta_{1}(s)$ in 
$\eta(s)$ for $\forall s\in S$, so the two families of pointed covers of 
$(Y,y_0)$ are equivalent.
\end{proof}
\section{Universal families of pointed 
covers with fixed monodromy group}\label{s4}
\begin{block}\label{3.1}
Given a reduced complex space $X$ and a properly discontinuous group of 
automorphisms $G$  of  $X$  let  $p:X\to  X/G=Z$  be  the  quotient  map  of 
topological spaces. Cartan defined in \cite{Ca} a sheaf  of  complex  valued 
functions $\mathcal{K}$ on $Z$: for every open subset $V$ of $|Z|$, 
$\mathcal{K}(V)=\mathcal{O}_{X}(p^{-1}(V))^{G}$, every stalk 
$\mathcal{K}_{z}$ is a local ring, and he proved in 
\cite[Th\'{e}or\`{e}me~4]{Ca} that the $\mathbb{C}$-ringed space 
$(Z,\mathcal{K})$ is a reduced complex space. Clearly 
$(Z,\mathcal{O}_{Z})$,  $\mathcal{O}_{Z}=\mathcal{K}$,  is  the   categorical 
quotient of $(X,\mathcal{O}_{X})$ in the category of complex spaces.
\end{block}
\begin{proposition}\label{3.1a}
Let $X$ be a normal algebraic variety. Let $G$ be a finite group, which  has 
the property that every orbit is contained in an affine open set. Then 
$X^{an}/G$ is biholomorphic to $(X/G)^{an}$. 
\end{proposition}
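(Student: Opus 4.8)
The plan is to reduce the statement to a local computation on affine charts and then invoke the Riemann extension theorem on normal complex spaces. Since every $G$-orbit lies in an affine open and $X$ is separated, intersecting the $G$-translates of such an open shows that $X$ is covered by $G$-stable affine opens $U=\Spec B$; the geometric quotient $X/G$ is obtained by gluing the affine schemes $V=\Spec B^{G}$, and the quotient morphism $\pi\colon X\to X/G$ is finite. By Noether's finiteness theorem $B^{G}$ is a finitely generated $\mathbb{C}$-algebra, $B$ is module-finite over $B^{G}$, and since $B$ is normal so is $B^{G}$; hence each $V$, and therefore $X/G$, is a normal algebraic variety. Because $\pi$ is $G$-invariant, analytification produces a $G$-invariant holomorphic map $\pi^{an}\colon X^{an}\to (X/G)^{an}$, which by Cartan's universal property of the categorical quotient $p\colon X^{an}\to X^{an}/G$ (cf. \S\ref{3.1}) factors through a canonical holomorphic map $\psi\colon X^{an}/G\to (X/G)^{an}$ with $\psi\circ p=\pi^{an}$. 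It suffices to prove that $\psi$ is a biholomorphism, and since $\psi$ is canonical this can be checked over each chart $V$.

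Next I would verify that $\psi$ is a homeomorphism over $V$. As $\pi\colon U\to V$ is finite, $\pi^{an}$ is a finite, hence proper and closed, surjection, and its fibres are exactly the $G$-orbits, because $G$ acts transitively on the primes of $B$ lying over a given prime of $B^{G}$. Thus $\pi^{an}$ is a topological quotient map for the $G$-action, as is $p$, so they induce a canonical homeomorphism, namely $\psi$, between the two quotients. It then remains to identify the structure sheaves, i.e. to show that for every open $W\subset V^{an}$ pullback along $\pi^{an}$ gives an isomorphism $\mathcal{O}_{V^{an}}(W)\overset{\sim}{\lto}\mathcal{O}_{U^{an}}((\pi^{an})^{-1}W)^{G}$, the right-hand side being the value of Cartan's sheaf $\mathcal{K}$. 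Injectivity and the inclusion $\subseteq$ are immediate from $G$-invariance of $\pi^{an}$ together with surjectivity of $\psi$.

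The main point, and the step I expect to be the real obstacle, is the reverse inclusion: a $G$-invariant holomorphic function $f$ on $(\pi^{an})^{-1}(W)$ must descend to a \emph{holomorphic} function on $W$. Via the homeomorphism $\psi$, such an $f$ descends at least to a continuous, hence locally bounded, function $\bar{f}$ on $W$. In characteristic zero $B$ is generically separable over $B^{G}$, so the ramification locus of $\pi$ is a proper closed algebraic subset whose analytification $R\subset V^{an}$ is a thin analytic set; over $V^{an}\setminus R$ the map $\pi^{an}$ is a local biholomorphism, so there $\bar{f}$ is holomorphic. Since $V$ is normal, $V^{an}$ is a normal complex space, and the Riemann extension theorem for normal complex spaces permits a locally bounded function that is holomorphic off a thin analytic subset to be extended holomorphically across it; hence $\bar{f}\in\mathcal{O}_{V^{an}}(W)$. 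This gives $\mathcal{O}_{V^{an}}\cong\mathcal{K}$, so $\psi|_{V}$ is a biholomorphism of $U^{an}/G$ onto $V^{an}=(U/G)^{an}$.

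Finally, since the local biholomorphisms are all restrictions of the single canonical map $\psi$, they are automatically compatible on overlaps and glue to the desired global biholomorphism $X^{an}/G\cong(X/G)^{an}$. The crux of the whole argument is the descent of invariant functions, where normality of $X$, inherited by $X/G$, is used precisely so that Riemann extension applies across the branch locus; everything else is formal manipulation of finite morphisms and of Cartan's categorical quotient.
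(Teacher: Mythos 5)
Your proof is correct, and its global skeleton coincides with the paper's: both construct the canonical holomorphic map $\psi:X^{an}/G\to (X/G)^{an}$ from the universal properties of Cartan's quotient (\S\ref{3.1}) and of analytification \cite{SGA1}, and both then upgrade $\psi$ to a biholomorphism using normality of $X/G$. The difference lies in how the upgrade is carried out. The paper quotes two facts: that $|X^{an}/G|\to|(X/G)^{an}|$ is a homeomorphism is cited from \cite[Lemma~2.5]{K7}, and biholomorphy is then deduced from the maximality of normal complex spaces \cite[\S2.29]{Fi}, i.e.\ a holomorphic homeomorphism onto a maximal complex space is biholomorphic. You instead prove both ingredients directly: the homeomorphism via finiteness of $\pi$ and transitivity of $G$ on the fibers over $G$-stable affine charts $\Spec B^{G}$ (which also yields normality of $X/G$, a fact the paper merely asserts), and the identification of Cartan's sheaf $\mathcal{K}$ with $\mathcal{O}_{(X/G)^{an}}$ by descending invariant functions off the branch locus and invoking the Riemann extension theorem on the normal space $(X/G)^{an}$. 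That extension step is exactly the content of ``normal implies maximal,'' so in effect you have inlined the proofs of the two results the paper cites. What your route buys is self-containedness and explicitness (no reliance on \cite{K7} or \cite{Fi}, and the descent mechanism is visible); what the paper's route buys is brevity and the avoidance of any chart-by-chart analysis of ramification.
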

\begin{proof}
Consider the composition of morphisms of $\mathbb{C}$-ringed spaces
\[
(X^{an},\mathcal{O}_{X^{an}}) \to (X,\mathcal{O}_{X}) \to 
(X/G,\mathcal{O}_{X/G}).
\]
By the construction of $\mathcal{K}=\mathcal{O}_{Z}$ it factors as 
\[
(X^{an},\mathcal{O}_{X^{an}})\to (Z,\mathcal{O}_{Z})\to 
(X/G,\mathcal{O}_{X/G}).
\]
This induces a holomorphic map $X^{an}/G = Z \to (X/G)^{an}$ 
(cf. \cite[Th\'{e}or\`{e}me~1.1]{SGA1}). The continuous map 
$|X^{an}/G|\to |(X/G)^{an}|$ is a homeomorphism (cf. Lemma~2.5 of \cite{K7}). 
The algebraic variety $X/G$ is normal, so  
$(X/G)^{an}$ is a normal complex space 
(cf. \cite[Proposition~2.1]{SGA1}). 
Normality implies maximality (cf. \cite[\S2.29]{Fi})
therefore  the  holomorphic homeomorphism
$X^{an}/G \to (X/G)^{an}$ is a biholomorphic map \linebreak
\cite[\S2.29]{Fi}.
\end{proof}
\begin{block}\label{3.4}
Let $p:C\to Y$ be a $G$-cover. Let $b\in Y$ be a branch point. Let 
$p^{-1}(b)=\{w_{1},\ldots,w_{r}\}$. There is an embedded open disk 
$V\subset Y, b\in  V$,  such  that  $p^{-1}(V)=\sqcup_{i=1}^{r}W_{i}$  is  a 
disjoint union of connected components $W_{i}$, $w_{i}\in W_{i}$ for 
$i=1,\ldots,r$, and every $p|_{W_{i}}:W_{i}\to V$ is a surjective cyclic 
analytic covering with Galois group 
$G(w_{i})=St_{G}(w_{i})\cong C_{e}\subset \mathbb{C}^{\ast}$, where 
$|G|=er$. Consider the left action of $G$ on $\Lambda \times C$ defined by 
$g(\lambda,z)=(\lambda g^{-1},gz)$. Let 
$X=(\Lambda \times C)/G := \Lambda \times^{G}C$, let 
$\pi:\Lambda \times C\to X$ be the quotient map and let  $f:X\to  Y$  be  the 
map $f(\pi(\lambda,z))=p(z)$. Let $i$ be an integer, $1\leq i\leq r$, let $w=w_{i}$, $W=W_{i}$. It  is 
clear that $f^{-1}(V)\cong (\Lambda \times W)/G(w)$. Let $x=\pi(\lambda,w)$. 
Let $\{\lambda_{1},\ldots,\lambda_{k}\} = \lambda G(w)$. Then 
$(\{\lambda_{1},\ldots,\lambda_{k}\}  \times   W)/G(w)$   is   the   
connected component of $f^{-1}(V)$ which contains $x=\pi(\lambda,w)$. Let 
$G(\lambda,w)=G(w)\cap G(\lambda)$.  Let  $|G(\lambda,w)|=q$.  Then  $e=kq$, 
the map $(\{\lambda\}\times W)/G(\lambda,w)\to 
(\{\lambda_{1},\ldots,\lambda_{k}\}\times W)/G(w)$ is biholomorphic and  the 
composition of holomorphic maps
\[
\begin{split}
W \to (\{\lambda\}\times W)/G(\lambda,w) \overset{\sim}{\lto} 
&(\{\lambda_{1},\ldots,\lambda_{k}\}\times W)/G(w) \eto \\ 
&(\Lambda \times W)/G(w) \to W/G(w) \cong V
\end{split}
\]
has the following form in local coordinates: 
$s\mapsto u=s^{q}\mapsto t=s^{e} = u^{k}$. This shows that
the ramification index of $f:X\to Y$ at the point $x=\pi(\lambda,w)$ 
equals $k=|\lambda G(w)|$.
\par
Let $v_{0}\in V\setminus \{b\}$, $w_{0}\in W$, $p(w_{0})=v_{0}$. Then 
$f^{-1}(v_0)=\{x_{1}\ldots,x_{d}\}$,  where  $x_{i}=\pi(\lambda_{i},w_{0})$. 
If   $\beta:I\to   V\setminus    \{b\}$    is    a    simple    loop    with 
$\beta(0)=\beta(1)=v_{0}$, then  $w_{0}\beta  =  gw_{0}$,  where  $g$  is  a 
generator of $G(w)$. One has $x_{i}\beta = \pi(\lambda_{i},w_{0}\beta ) = 
\pi(\lambda_{i},gw_{0})   =    \pi(\lambda_{i}g,w_{0})$.    Therefore    the 
decomposition of the permutation $x_{i}\mapsto x_{i}\beta$, $i=1,\ldots,d$ 
into  a  product 
of disjoint cycles, by which one determines the indices of the  ramification 
points over $b$, corresponds to the decomposition of $\Lambda$ into a  union 
of $G(w)$-orbits, as discussed above.
\end{block}
\begin{block}\label{3.7}
Let $y_0\in Y$. Let
\begin{equation}\label{e3.7}
(p:\mathcal{C}(y_0)\to Y\times H^G_n(Y,y_0),\zeta:H^G_n(Y,y_0)\to 
\mathcal{C}(y_0))
\end{equation}
 be the universal family of 
pointed $G$-covers of $(Y,y_0)$ branched in $n$ points 
(cf. \cite[Theorem~3.20]{K7}). We recall from \cite[Section~3]{K7} the local analytic
form of $p$ at the points lying over the branch locus 
$B=\{(y,(D,m))|y\in D\}$. 
Let $D=\{b_1,\ldots,b_j,\ldots,b_n\}$, $y_0\in Y\setminus D$. 
Let us choose 
local analytic  coordinates  $s_{i}$ at $b_{i}$, such that $s_{i}(b_{i})=0$, 
$i=1,\ldots,n$.
Let $\epsilon \in \mathbb{R}^+, \epsilon \ll 1$ be such that the 
open   sets   $U_{i}=\{y|s_{i}(y)<\epsilon\}$   have    disjoint    closures 
$\overline{U}_{i}$,     $i=1,\ldots,n$ and $y_{0}\in Y\setminus 
\cup_{i=1}^n\overline{U}_i$.   
Let $\gamma_1,\ldots,\gamma_n$ be closed paths based at $y_0$ as in 
\S\ref{2.2}. Let $p(w) = (b,(D,m))$, where $b=b_j$. 
 Let    $U=U_{j}$    and    let    $V=U\times N_{(D,m)}(U_1,\ldots,U_n)$ (cf. 
\S\ref{2.16}).
For every $v=(y,(D',m(D'))\in V$, where $y\in U$, $D'=\{y_1,\ldots,y_n\}, 
y_i\in U_i$ 
let $t_i(v) = s_i(y_i)$ and let $t(v)=s_j(y)$. Let $G(w)$  be  the  isotropy 
group of $w$, $|G(w)|=e\geq 2$. Then $w$ has a connected neighborhood 
$W\subset |\mathcal{C}(y_{0})^{an}|$ which is $G(w)$-invariant, 
$p(W)=V$ and $gW\cap W=\emptyset$ if $g\in G\setminus G(w)$. Let 
$E\subset \mathbb{C}\times V$ be the analytic subset defined by 
$z^{e}=t-t_{j}$ and let $p_{1}:E\to V$ be the projection map. Then there  is 
a biholomorphic map $\theta:W\to E$, such that $p_{1}\circ \theta = p|_{W}$. 
The composition 
$\rho = (z,t_{1},\ldots,t_{n})\circ \theta : W\to \mathbb{C}^{n+1}$ maps 
$W$ biholomorphically onto an open subset $\Omega \subset \mathbb{C}^{n+1}$. 
There exists a primitive character $\chi:G(w)\to \mathbb{C}^{\ast}$ such that 
$\theta$ and $\rho$ are $G(w)$-equivariant with respect to the action of 
$G(w)$ on $E$ and $\Omega$ defined respectively by $g(z,v)=(\chi(g)z,v)$  and 
$g(z,z_{1},\ldots,z_{n})=(\chi(g)z,z_{1},\ldots,z_{n})$.
\end{block}
We need a simple case of Cartan's theorem \cite[Th\'{e}or\`{e}me~4]{Ca}.
\begin{lemma}\label{3.8a}
Let $H$ be a cyclic group of order $q$, let $\chi:H\to \mathbb{C}^{\ast}$ be 
a primitive character. Let $\Omega$ be an open  subset  of  $\mathbb{C}^{n}$ 
invariant under the action of $H$ on $\mathbb{C}^{n}$ defined by 
$h(z_{1},z_{2},\ldots,z_{n})=(\chi(h)z_{1},z_{2},\ldots,z_{n})$. Let 
$\Omega/H$ be the quotient complex space \cite[Th\'{e}or\`{e}me~4]{Ca} 
and let $p:\Omega \to \Omega/H$ be the quotient holomorphic map. Let 
$\psi:\Omega\to \mathbb{C}^{n}$ be the map 
$\psi(z_{1},z_{2},\ldots,z_{n}) = (z_{1}^{q},z_{2},\ldots,z_{n})$. Then 
$\Omega_{1}=\psi(\Omega)$ is an open subset of  $\mathbb{C}^{n}$  and  there 
exists a biholomorphic map $\mu: \Omega_{1}\to \Omega/H$ such that 
$p=\mu \circ \psi$. 
\end{lemma}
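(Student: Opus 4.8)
The plan is to realize $\psi$ as the topological quotient map of the $H$-action and then upgrade the resulting homeomorphism to a biholomorphism by the normality argument already used in Proposition~\ref{3.1a}. First I would record the elementary properties of $\psi$. Since $\chi$ is primitive and $H$ is cyclic of order $q$, the character restricts to a bijection $\chi\colon H\to \mu_q$ onto the group $\mu_q$ of $q$-th roots of unity; in particular $\chi(h)^q=1$ for every $h\in H$, so $\psi(h\cdot z)=((\chi(h)z_{1})^{q},z_{2},\dots,z_{n})=(z_{1}^{q},z_{2},\dots,z_{n})=\psi(z)$, and $\psi$ is $H$-invariant. Moreover $\psi(z)=\psi(z')$ forces $z_{i}=z_{i}'$ for $i\geq 2$ and $z_{1}'=\omega z_{1}$ for some $\omega\in\mu_q$; choosing $h\in H$ with $\chi(h)=\omega$ gives $z'=h\cdot z$. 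Hence the fibers of $\psi$ in $\Omega$ are exactly the $H$-orbits. Finally, writing $\phi=(z_{1}\mapsto z_{1}^{q})\times \mathrm{id}_{\mathbb{C}^{n-1}}$, the map $\phi$ is open, being a product of the open map $z_{1}\mapsto z_{1}^{q}$ with the identity; therefore $\Omega_{1}=\psi(\Omega)=\phi(\Omega)$ is an open subset of $\mathbb{C}^{n}$.

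Next I would descend $\psi$ through the quotient. By \S\ref{3.1} the space $\Omega/H$ is a reduced complex space and $p\colon\Omega\to\Omega/H$ is the categorical quotient in the category of complex spaces. Since $\psi$ is holomorphic, $H$-invariant, and takes values in the open set $\Omega_{1}$, it factors uniquely as $\psi=\bar\psi\circ p$ with $\bar\psi\colon\Omega/H\to\Omega_{1}$ holomorphic. By the previous paragraph $\bar\psi$ is bijective: injectivity follows from the fact that the fibers of $\psi$ are the $H$-orbits, and surjectivity is the definition of $\Omega_{1}$. It is also a homeomorphism, since $p$ is an open quotient map (a finite group acts) and $\psi$ is open, so for every open $O\subseteq\Omega/H$ one has $\bar\psi(O)=\psi(p^{-1}(O))$, which is open; thus $\bar\psi$ is an open continuous bijection, hence a homeomorphism.

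Finally I would promote $\bar\psi$ to a biholomorphism exactly as in Proposition~\ref{3.1a}. The set $\Omega_{1}$ is an open subset of $\mathbb{C}^{n}$, hence a smooth, and in particular normal, complex space. On the free locus $\{z_{1}\neq 0\}$ the action of $H$ is free and $z_{1}\mapsto z_{1}^{q}$ has nonvanishing derivative, so $\bar\psi$ is a local biholomorphism there; thus $\bar\psi$ is biholomorphic outside the nowhere dense analytic subset $\Omega_{1}\cap\{w_{1}=0\}$. Since normality implies maximality (cf.\ the argument of Proposition~\ref{3.1a}), the holomorphic homeomorphism $\bar\psi\colon\Omega/H\to\Omega_{1}$ onto the normal space $\Omega_{1}$ is biholomorphic. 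Setting $\mu=\bar\psi^{-1}\colon\Omega_{1}\to\Omega/H$ then yields a biholomorphic map with $p=\mu\circ\psi$, as required. The only genuine obstacle is this last step, the passage from a holomorphic homeomorphism to a biholomorphism; everything preceding it is formal, and this step is handled by the normality/maximality principle already invoked for Proposition~\ref{3.1a}.
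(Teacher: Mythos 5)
Your proof is correct, but it takes a genuinely different route from the paper's. The paper argues directly on the level of ringed spaces: by Cartan's construction it suffices to verify, at every point $a\in\Omega$ with image $b=\psi(a)$, the identity $(\mathcal{O}_{\Omega,a})^{H(a)}=\psi^{\sharp}_{a}(\mathcal{O}_{\Omega_{1},b})$; this is immediate at points with $a_{1}\neq 0$ (where the action is free and $\psi$ is locally biholomorphic), and at points with $a_{1}=0$ it is proved by an explicit power-series computation: $H$-invariance of a convergent series forces every exponent of $z_{1}$ to be divisible by $q$, so the series is the pullback of a convergent series in $(z_{1}^{q},z_{2},\ldots,z_{n})$. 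You instead invoke the categorical quotient property of $(\Omega/H,\mathcal{K})$ from \S\ref{3.1} to obtain the holomorphic comparison map $\bar\psi\colon\Omega/H\to\Omega_{1}$, show it is a homeomorphism whose restriction off the thin set $\{w_{1}=0\}$ is a local biholomorphism, and then upgrade it to a biholomorphism by the same normality/maximality principle \cite[\S2.29]{Fi} that the paper uses in Proposition~\ref{3.1a}; since $\Omega_{1}$ is a domain in $\mathbb{C}^{n}$, this last step amounts to Riemann's removable singularity theorem applied to the components of $\bar\psi^{-1}$ in local embeddings of $\Omega/H$. Your route is shorter and reuses a tool already present in the paper, at the price of resting on that extension theorem; the paper's computation is more elementary and self-contained, and it exhibits explicitly what the invariant germs are --- the concrete reason the quotient is again a domain in $\mathbb{C}^{n}$ --- which your soft argument does not display. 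That your comparison map goes in the direction opposite to $\mu$ (you set $\mu=\bar\psi^{-1}$) is immaterial; both proofs yield $p=\mu\circ\psi$.
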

\begin{proof}
The holomorphic map $\psi:\Omega \to \mathbb{C}^{n}$ is open since 
the map $\mathbb{C}\to \mathbb{C}$ given by $z\mapsto z^{q}$ is open. 
Hence $\Omega_{1}=\psi(\Omega)$ is an  open 
subset of $\mathbb{C}^{n}$ and $|\Omega_{1}|$ is homeomorphic to the quotient 
topological space $|\Omega|/H$. In order to prove that 
$(\Omega_{1},\mathcal{O}_{\Omega_{1}})$ is isomorphic to the quotient 
$\mathbb{C}$-ringed space $(\Omega/H,\mathcal{K})$ as defined in 
\cite[\S4]{Ca} it suffices to verify that for every $a\in \Omega$ if 
$H(a)$ is the isotropy group of $a$ and if $b=\psi(a)$, then
\begin{equation}\label{e3.8a}
(\mathcal{O}_{\Omega,a})^{H(a)} = 
\psi^{\sharp}_{a}(\mathcal{O}_{\Omega_{1},b}).
\end{equation}
Let $a=(a_{1},a_{2},\ldots,a_{n})$. If $a_{1}\neq 0$, then $H(a)=\{1\}$, 
$\psi:\Omega\to \Omega_{1}$ is locally biholomorphic at $a$, so 
\eqref{e3.8a}  holds.  Let  $a_{1}=0$.  Then  $H(a)=H$.  Without   loss   of 
generality we may suppose that $a=(0,0,\ldots,0)$. Let the $H$-invariant 
germ $f\in \mathcal{O}_{\Omega,a}$ be represented by a series 
$\sum_{\alpha_{1},\ldots,\alpha_{n}\geq 0}a_{\alpha_{1}\alpha_{2}\cdots 
\alpha_{n}}
z_{1}^{\alpha_{1}}z_{2}^{\alpha_{2}}\cdots z_{n}^{\alpha_{n}}$ which converges 
absolutely in 
the polydisk $|z_{i}|< \varepsilon_{i}$, $i=1,\ldots,n$. One has 
$\alpha_{1} = q\beta_{1}$ for every $\alpha_{1}$. The series 
$\sum_{\beta_{1},\alpha_{2},\ldots,\alpha_{n}\geq 
0}b_{\beta_{1}\alpha_{2}\cdots \alpha_{n}}
y_{1}^{\beta_{1}}z_{2}^{\alpha_{2}}\cdots z_{n}^{\alpha_{n}}$,
 where $b_{\beta_{1}\alpha_{2}\cdots \alpha_{n}}= 
a_{q\beta_{1}\alpha_{2}\cdots  \alpha_{n}}$,  converges  absolutely  in   the 
polydisk $|y_{1}|< \varepsilon_{1}^{q}, 
|z_{i}|< \varepsilon_{i}$, $i=2,\ldots,n$ and represents a germ 
$f_{1}\in \mathcal{O}_{\Omega_{1},b}$ such that 
$f = \psi_{a}^{\sharp}(f_{1})$. Hence \eqref{e3.8a} holds for 
$\forall a \in \Omega$.
\end{proof}
In the next proposition we use the setup and the notation  of  \S\ref{2.0} 
and \S\ref{3.7}.
\begin{proposition}\label{3.8}
Let $y_0 \in Y$. Let
\begin{equation}\label{e3.8}
(\varphi:\mathcal{X}(y_0)\to Y\times H^G_n(Y,y_0), 
\eta: H^G_n(Y,y_0)\to \mathcal{X}(y_0))
\end{equation}
be the smooth, proper family of pointed $(\Lambda,G)$-covers of $(Y,y_0)$ obtained from
\eqref{e3.7} as in Proposition~\ref{3.48}. The variety $\mathcal{X}(y_0)$ is 
smooth. The branch locus of $\varphi$ is $B$. 
For every element $(D,m)\in H^G_n(Y,y_0)$ the fiber 
$(\varphi_{(D,m)}:\mathcal{X}(y_0)_{(D,m)}\to Y, \eta(D,m))$ has 
monodromy invariant $(D,m^{N(\lambda_0)})$.
\par
Let $x =\pi(\lambda,w)\in \mathcal{X}(y_0)$ be a point such that 
$\varphi(x) = p(w)\in B$. Let $\varphi(x)=(b_{j},(D,m))$, where 
$D=\{b_1,\ldots,b_{j},\ldots,b_n\}$. Let $|\lambda G(w)|=k$. Then 
there exists a connected, open neighborhood $A$ of $x$ in 
$|\mathcal{X}(y_0)^{an}|$, such that 
$\varphi(A)=V$, where 
$V = \linebreak
U\times N_{(D,m)}(U_1,\ldots,U_n)$, $U=U_j$ and the following 
properties hold. Let  $E_{1}\subset  \mathbb{C}\times  V$  be  the  analytic 
subset defined by $z^{k}=t-t_{j}$ and let $\varphi_{1}:E_{1}\to V$ be the 
projection map.
\begin{enumerate}
\item
There exists a biholomorphic map $\theta_{1}:A\to E_{1}$ such that 
$\varphi_{1}\circ \theta_{1} = \varphi|_{A}$.
\item
The composition $\rho_{1}=(z,t_{1},\ldots,t_{n})\circ \theta_{1}:
A\to \mathbb{C}^{n+1}$ maps $A$ biholomorphically onto an open subset of 
$\mathbb{C}^{n+1}$.
\end{enumerate}
\end{proposition}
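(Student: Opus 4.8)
The plan is to dispatch the branch-locus and monodromy-invariant claims by citation, and then to prove smoothness and the local form simultaneously, since the analytic local model will exhibit $\mathcal{X}(y_0)$ as a complex manifold near each point over $B$. First, since $(\varphi,\eta)$ is exactly the family produced by Proposition~\ref{3.48} from \eqref{e3.7}, that proposition gives at once that the branch loci of $p$ and $\varphi$ coincide --- equal to $B$ by \S\ref{3.7} --- and that the fiber over $(D,m)$ has monodromy invariant $(D,m^{N(\lambda_0)})$, because the $G$-cover $p_{(D,m)}$ has monodromy invariant $(D,m)$. For smoothness away from $B$ I would observe that there $\varphi$ is finite, flat and unramified, hence \'{e}tale, onto $Y\times H^G_n(Y,y_0)$, which is smooth because $H^G_n(Y,y_0)$ is an \'{e}tale cover of the smooth variety $(Y\setminus y_0)^{(n)}_{\ast}$ (\S\ref{2.16}); so only the points over $B$ remain.

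For a point $x=\pi(\lambda,w)$ with $\varphi(x)=p(w)=(b_j,(D,m))\in B$, I would import the local model of $p$ from \S\ref{3.7}: the $G(w)$-invariant neighborhood $W$, the biholomorphism $\rho:W\overset{\sim}{\lto}\Omega\subset\mathbb{C}^{n+1}$, the primitive character $\chi$, and the equation $z^e=t-t_j$ with $e=|G(w)|$. Setting $G(\lambda,w)=G(w)\cap G(\lambda)$, a subgroup of the cyclic group $G(w)$ and hence cyclic of some order $q$ with $e=kq$ and $k=|\lambda G(w)|$, the same computation as in \S\ref{3.4} identifies the connected component $A$ of $\varphi^{-1}(V)$ through $x$ with $(\{\lambda\}\times W)/G(\lambda,w)$. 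Transporting through $\rho$, this becomes $\Omega/G(\lambda,w)$, with $G(\lambda,w)$ acting on the first coordinate of $\Omega$ through $\chi|_{G(\lambda,w)}$; and since a faithful character of a cyclic group restricts to a faithful character on any subgroup, $\chi|_{G(\lambda,w)}$ is again primitive.

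This puts me in position to apply Lemma~\ref{3.8a} with $H=G(\lambda,w)$ of order $q$: with $\psi(z,t_1,\ldots,t_n)=(z^q,t_1,\ldots,t_n)$ the image $\Omega_1=\psi(\Omega)$ is open in $\mathbb{C}^{n+1}$, and the lemma gives $A\cong\Omega/G(\lambda,w)\cong\Omega_1$. In particular $A$ is a complex manifold, so $\mathcal{X}(y_0)$ is smooth at $x$, which finishes smoothness. For the local form I would write $\zeta=z^q$ for the first coordinate of $\Omega_1$; then $\zeta^k=z^e=t-t_j$, so $\zeta$ is exactly the $\mathbb{C}$-coordinate of $E_1$. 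The induced biholomorphism $\theta_1:A\overset{\sim}{\lto}\Omega_1\to E_1$ carries $(\zeta,t_1,\ldots,t_n)$ to the point $(\zeta,v)\in E_1$ with $v\in V$ the point carrying the $t_i$ and $t(v)=t_j+\zeta^k$; it satisfies $\varphi_1\circ\theta_1=\varphi|_A$, since both sides return the projection to $V$ computed from the same local data, giving~(i). Finally $\rho_1=(z,t_1,\ldots,t_n)\circ\theta_1$ is by construction the identification $A\cong\Omega_1$, hence a biholomorphism of $A$ onto the open set $\Omega_1\subset\mathbb{C}^{n+1}$, giving~(ii).

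The step I expect to require the most care is the bookkeeping of exponents and group orders: one must verify the relation $e=kq$ and track how the degree-$e$ equation $z^e=t-t_j$ for $p$ becomes the degree-$k$ equation $\zeta^k=t-t_j$ for $\varphi$ precisely because one quotients by the order-$q$ subgroup $G(\lambda,w)$. Matching the primitivity of $\chi|_{G(\lambda,w)}$ and the $q$-th power map of Lemma~\ref{3.8a} to this data is the crux that makes the two presentations of $A$ agree and pins down the exponent $k$ in $E_1$.
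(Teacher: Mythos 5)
Your proposal is correct and follows essentially the same route as the paper's proof: cite Proposition~\ref{3.48} for the branch locus and the monodromy invariants, identify the neighborhood $A$ of $x=\pi(\lambda,w)$ with $(\{\lambda\}\times W)/G(\lambda,w)$, observe that the primitive character $\chi$ of $G(w)$ restricts to a primitive character of the order-$q$ subgroup $G(\lambda,w)$, and apply Lemma~\ref{3.8a} together with the relation $e=kq$ to obtain the model $z^{k}=t-t_{j}$ and the biholomorphism onto the open set $\Omega_{1}\subset \mathbb{C}^{n+1}$. The only cosmetic difference is your smoothness argument (\'{e}tale off $B$, plus the local model over $B$), which the paper itself records as an alternative in its closing remark, its primary argument being that $\mathcal{X}(y_0)\to H^G_n(Y,y_0)\to \Spec \mathbb{C}$ is a composition of smooth morphisms.
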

\begin{proof}
One applies Proposition~\ref{3.48} to $(p:\mathcal{C}(y_0)\to Y\times H^G_n(Y,y_0),\zeta)$. 
The composition of the smooth morphisms 
$\mathcal{X}(y_0)\to H^G_n(Y,y_0)\to \Spec \mathbb{C}$ is smooth, so $\mathcal{X}(y_0)$
is a smooth variety.
\par
It remains to prove the statements about the local analytic form of $\varphi$. According to 
Proposition~\ref{3.1a} $\mathcal{X}(y_0)^{an}$ is biholomorphic to 
$(\Lambda \times \mathcal{C}(y_0)^{an})/G$. Let 
$G(\lambda,w)=G(\lambda)\cap G(w)$. This is a cyclic group of order $q$ and
$e=kq$. The open subset $\{\lambda\}\times W\subset
\Lambda \times \mathcal{C}(y_0)$ is $G(\lambda,w)$-invariant and for every 
$g\in G\setminus G(\lambda,w)$ one has $g (\{\lambda\}\times W)\cap
(\{\lambda\}\times W)=\emptyset$. Let $A=\pi(\{\lambda\}\times W)$.
The open complex subspace 
$A\subset \mathcal{X}(y_0)^{an}$ is biholomorphic to 
$\{\lambda\}\times W/G(\lambda,w)$. Let $H=G(\lambda,w)$ and let 
$\chi_{1}:H\to \mathbb{C}^{\ast}$ be the restriction of 
$\chi:G\to \mathbb{C}^{\ast}$ (cf. \S\ref{3.7}). Identifying 
$\{\lambda\}\times W$ with $W$ the biholomorphic map 
$\theta:\{\lambda\}\times W\to E$ is $H$-equivariant. Let 
$\psi_{1}:E\to E_{1}$ be the map $\psi_{1}(z,v)=(z^{q},v)$. The 
composition $\psi_{1}\circ \theta$ is $H$-invariant, so there is a 
holomorphic map $\theta_{1}:A\to E_{1}$ such that 
$\psi_{1}\circ \theta = \theta_{1}\circ \pi|_{\{\lambda\}\times W}$.
Let $\Omega = \rho(\{\lambda\}\times W)$, let 
$\psi:\mathbb{C}^{n+1}\to \mathbb{C}^{n+1}$ be the map 
$\psi(z,z_{1},\ldots,z_{n})=(z^{q},z_{1},\ldots,z_{n})$ and let 
$\Omega_{1}=\psi(\Omega)$. One has the following commutative diagram
 of holomorphic maps:
\begin{equation}\label{e3.10}
\xymatrix{
\{\lambda\}\times W
\ar[r]^-{\theta}\ar[d]_-{\pi}&E\; \ar[d]^-{\psi_{1}}
\ar[r]^-{(z,t_{1},\ldots,t_{n})}
&\; \Omega\ar[d]^-{\psi}\\
A\ar[r]^-{\theta_{1}}&\; \; E_1\; \ar[r]^-{(z,t_{1},\ldots,t_{n})}&\; \; 
\Omega_{1}.
}
\end{equation}
The vertical maps in the right square of  \eqref{e3.10}  are  $H$-invariant, 
the horizontal maps are biholomorphic (cf. \cite[Proposition~3.18]{K7}) and 
by Lemma~\ref{3.8a} $\Omega/H\cong \Omega_{1}$, therefore 
$E_{1}\cong E/H$. This implies that $\theta_{1}:A\to E_{1}$ and 
$\rho_{1}:A\to \Omega_{1}$ are biholomorphic maps. The  map  $p_{1}:E\to  V$ 
equals $\varphi_{1}\circ \psi_{1}$, therefore the equality 
$p_{1}\circ \theta=p|_{\{\lambda\}\times W}$ implies 
$\varphi_{1}\circ \theta_{1} = \varphi|_{A}$. 
\par
We mention that 
(ii) implies the smoothness of $\mathcal{X}(y_0)$, as well as
the smoothness of $\pi_2\circ \varphi:\mathcal{X}(y_0)\to H^G_n(Y,y_0)$
using \cite[Ch.~III Proposition~10.4]{Hart}.
\end{proof}
The next proposition is a particular case of Proposition~\ref{3.48}.
\begin{proposition}\label{3.12}
Let $(p:\mathcal{C}(y_0)\to Y\times H^G_n(Y,y_0),\zeta)$ be as in \S\ref{3.7}. Let 
$\rho: \mathcal{C}(y_{0})\to \mathcal{C}(y_{0})/G(\lambda_0)$ be the 
quotient morphism and let 
$\eta_{1}=\rho \circ \zeta:H^G_n(Y,y_0)\to 
\mathcal{C}(y_{0})/G(\lambda_0)$. Then 
$(\overline{p}:\mathcal{C}(y_{0})/G(\lambda_0)\to Y\times H^G_n(Y,y_0), \eta_{1})$  is  a 
family of pointed covers of $(Y,y_0)$ equivalent to the 
family $(\varphi:\mathcal{X}(y_0)\to Y\times H^G_n(Y,y_0),\eta)$ defined in 
Proposition~\ref{3.8}. 
\end{proposition}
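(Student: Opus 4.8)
The plan is to invoke Proposition~\ref{3.48} directly, since the present statement is nothing more than its specialization to the universal family of §\ref{3.7}. First I would record that the family $(p:\mathcal{C}(y_0)\to Y\times H^G_n(Y,y_0),\zeta)$ recalled in \S\ref{3.7} is, by construction (cf. \cite[Theorem~3.20]{K7}), a smooth, proper family of pointed $G$-covers of $(Y,y_0)$ branched in $n$ points. Hence it satisfies the hypotheses of Proposition~\ref{3.48} with parameter variety $S=H^G_n(Y,y_0)$, and there is nothing to check beyond this verification.

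Next I would match the two constructions with the notation of Proposition~\ref{3.48}. The family $(\varphi:\mathcal{X}(y_0)\to Y\times H^G_n(Y,y_0),\eta)$ of Proposition~\ref{3.8} was \emph{defined} by applying Proposition~\ref{3.48} to the present universal family, so that $\mathcal{X}(y_0)=\Lambda\times^{G}\mathcal{C}(y_0)$ and $\eta(s)=\pi(\lambda_0,\zeta(s))$; this is exactly the family $(f:X\to Y\times S,\eta)$ of that proposition. On the other hand, setting $X_{1}=\mathcal{C}(y_0)/G(\lambda_0)$, letting $\rho:\mathcal{C}(y_0)\to X_{1}$ be the quotient morphism, $\overline{p}$ the induced morphism to $Y\times H^G_n(Y,y_0)$, and $\eta_{1}=\rho\circ\zeta$, one recovers verbatim the family $(f_{1}:X_{1}\to Y\times S,\eta_{1})$ produced in the last paragraph of Proposition~\ref{3.48}.

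The conclusion is then immediate: the final assertion of Proposition~\ref{3.48} states that $(f_{1}:X_{1}\to Y\times S,\eta_{1})$ is a smooth, proper family of pointed covers of $(Y,y_0)$ equivalent to $(f:X\to Y\times S,\eta)$. Specializing $S=H^G_n(Y,y_0)$ and using the identifications above yields the desired equivalence of $(\overline{p}:\mathcal{C}(y_0)/G(\lambda_0)\to Y\times H^G_n(Y,y_0),\eta_{1})$ with $(\varphi:\mathcal{X}(y_0)\to Y\times H^G_n(Y,y_0),\eta)$. I do not expect any genuine obstacle: the only point meriting a word of care is that the $Y\times S$-isomorphism realizing the equivalence, induced by $\rho(z)\mapsto\pi(\lambda_0,z)$ as in \eqref{e2.7a}, carries the section $\eta_{1}(s)=\rho(\zeta(s))$ to $\eta(s)=\pi(\lambda_0,\zeta(s))$, which it does by construction; thus the equivalence is compatible with the markings and the proof reduces entirely to citing Proposition~\ref{3.48}.
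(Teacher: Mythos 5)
Your proposal is correct and coincides with the paper's own treatment: the paper gives no separate proof of Proposition~\ref{3.12}, merely remarking that it is ``a particular case of Proposition~\ref{3.48},'' which is exactly the specialization (with $S=H^G_n(Y,y_0)$, $\mathcal{C}=\mathcal{C}(y_0)$) that you carry out. Your additional check that the equivalence $\rho(z)\mapsto\pi(\lambda_0,z)$ respects the sections $\eta_{1}$ and $\eta$ is precisely the point already established in the last paragraph of the proof of Proposition~\ref{3.48}, so nothing further is needed.
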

In Proposition~\ref{2.7} we proved that the set 
$H^{\Lambda,G}_{n,\lambda_0}(Y,y_0)$ is bijective to the set of 
equivalence classes of pointed $(\Lambda,G)$-covers of $(Y,y_0)$ 
branched in $n$ points.
\begin{proposition}\label{3.12a}
The left action of $N_{S(\Lambda)}(G)$ on $H^G_n(Y,y_0)$ defined by 
$\sigma \ast (D,m) = (D,\sigma m \sigma^{-1})$ is an action by 
covering automorphisms of the \'{e}tale cover \linebreak
$\delta: H^G_n(Y,y_0)\to (Y\setminus y_0)^{(n)}_{\ast}$, where 
$\delta(D,m)=D$ (cf. \S\ref{2.16}). The subgroup 
$N(\lambda_0)=\{\sigma \in N_{S(\Lambda)}(G)|\lambda_{0}\sigma=
\lambda_{0}\}$ acts freely on $H^G_n(Y,y_0)$ and the 
quotient set 
$H^{\Lambda,G}_{n,\lambda_0}(Y,y_0)=H^G_n(Y,y_0)/N(\lambda_0)$ can be 
endowed with a structure of quotient affine algebraic variety. 
The quotient map $\nu:H^G_n(Y,y_0)\to H^{\Lambda,G}_{n,\lambda_0}(Y,y_0)$ 
and the map $\delta_{1}:H^{\Lambda,G}_{n,\lambda_0}(Y,y_0)\to 
(Y\setminus y_0)^{(n)}_{\ast}$ given by 
$\delta_{1}(D,m^{N(\lambda_0)})=D$ are \'{e}tale covers.
\end{proposition}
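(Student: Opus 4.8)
The plan is to regard $\delta$ as a finite \'etale cover of the smooth quasi-projective variety $(Y\setminus y_0)^{(n)}_\ast$ and to exploit the equivalence between finite \'etale covers of such a variety and finite covering spaces of its analytification, equivalently finite $\pi_1$-sets (Grothendieck's Riemann existence theorem, cf.\ \cite{SGA1}). Under this equivalence the fiber of $\delta$ over a point $D$ is the finite set $F_D$ of epimorphisms $m:\pi_1(Y\setminus D,y_0)\to G$ satisfying Condition~\eqref{e2.2}, and the monodromy of $\delta$ is the induced action of $\pi_1((Y\setminus y_0)^{(n)}_\ast)$ on $F_D$ obtained by transporting $m$ along loops, as encoded by the basic open sets $N_{(D,m)}(U_1,\ldots,U_n)$ of \S\ref{2.16}.

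First I would check that the conjugation action is by covering automorphisms. It is a left action, since $(\sigma\tau)m(\sigma\tau)^{-1}=\sigma(\tau m\tau^{-1})\sigma^{-1}$, and it manifestly commutes with $\delta$ because it fixes $D$. To see that each $\sigma$ is a covering automorphism I would verify that the conjugation action commutes with the monodromy of $\delta$: a loop in the base transports $m$ to $m\circ\phi$ for a (braid-type) automorphism $\phi$ of $\pi_1(Y\setminus D,y_0)$, and since $m\mapsto\sigma m\sigma^{-1}$ is post-composition with an inner automorphism of $S(\Lambda)$, it commutes with pre-composition by $\phi$. Concretely, the compatibility $(\sigma m\sigma^{-1})(E)=\sigma\,m(E)\,\sigma^{-1}$, forced by the commutativity of diagram~\eqref{e2.16}, shows that $(D,m)\mapsto(D,\sigma m\sigma^{-1})$ carries each basic open set homeomorphically onto another one lying over the same base neighborhood, hence defines a topological covering automorphism of $|\delta^{an}|$. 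Full faithfulness of the equivalence of categories then upgrades it to an automorphism of the algebraic variety $H^G_n(Y,y_0)$ over $(Y\setminus y_0)^{(n)}_\ast$.

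The freeness of $N(\lambda_0)$ is the pointwise computation already made in the proof of Proposition~\ref{2.3}: if $\sigma\in N(\lambda_0)$ fixes some $(D,m)$, then $\sigma m\sigma^{-1}=m$, so $\sigma$ centralizes $G=\Imm(m)$; thus for every $g\in G$ one has $\lambda_0 g=\lambda_0(\sigma g\sigma^{-1})=(\lambda_0 g)\sigma^{-1}$, and transitivity $\Lambda=\lambda_0 G$ forces $\lambda\sigma=\lambda$ for all $\lambda$, i.e.\ $\sigma=id$. For the quotient variety I would use that $H^G_n(Y,y_0)$ is affine (\S\ref{2.16}) and that $N(\lambda_0)$ is a finite group acting by the algebraic automorphisms just produced; every orbit, being finite, is contained in the affine variety itself, so the geometric quotient exists as the affine variety $\Spec$ of the ring of invariants, which is precisely the set $H^{\Lambda,G}_{n,\lambda_0}(Y,y_0)=H^G_n(Y,y_0)/N(\lambda_0)$.

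Finally I would deduce the \'etale assertions. Since $N(\lambda_0)$ acts freely on the smooth variety $H^G_n(Y,y_0)$ (\'etale over the smooth $(Y\setminus y_0)^{(n)}_\ast$), the quotient map $\nu$ is a finite \'etale Galois cover with group $N(\lambda_0)$. For $\delta_1$ it is cleanest to note that, by freeness, $F_D/N(\lambda_0)$ is again a finite $\pi_1$-set, so the cover $\delta_1$ it classifies is finite \'etale; alternatively, from $\delta=\delta_1\circ\nu$ with $\nu$ finite \'etale surjective and $\delta$ \'etale, faithfully flat descent yields that $\delta_1$ is \'etale. The step I expect to be the main obstacle is the passage from the analytic/topological description of \S\ref{2.16} to the algebraic statements---establishing that the set-theoretic conjugation action is realized by morphisms and that the set-theoretic quotient carries an affine variety structure with $\nu$ and $\delta_1$ \'etale---which I resolve by systematically invoking the equivalence of finite \'etale covers with $\pi_1$-sets rather than by ad hoc gluing.
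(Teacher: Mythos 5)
Your proposal is correct and takes essentially the same route as the paper: you show conjugation permutes the basic open sets $N_{(D,m)}(U_1,\ldots,U_n)$ of \S\ref{2.16} (hence acts by covering homeomorphisms of $|\delta^{an}|$), upgrade this to algebraic covering automorphisms via the analytic--algebraic equivalence (the paper cites \cite[Corollary~4.5]{K7} where you invoke the Riemann existence theorem of \cite{SGA1} --- the same underlying tool), obtain freeness of $N(\lambda_0)$ from the computation in Proposition~\ref{2.3}, construct the quotient as the affine variety of invariants with $\nu$ \'etale by freeness of the finite action (the paper's reference to \cite[Ch.III]{Se}), and deduce that $\delta_{1}$ is \'etale from the factorization $\delta=\delta_{1}\circ\nu$. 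This matches the paper's proof step for step, with only cosmetic differences in packaging and citation.
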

\begin{proof}
The map $\delta:H^G_n(Y,y_0)\to (Y\setminus y_0)^{(n)}_{\ast}$ given by 
$\delta(D,m)=D$ is a topological covering map with respect to the  canonical 
complex topologies, every neighborhood $N_D(U_1,\ldots,U_n)$ of 
$D\in (Y\setminus y_0)^{(n)}_{\ast}$ is evenly covered (cf. \eqref{e2.16a}). 
It  is  clear   from   \eqref{e2.16bis}   that   for   every   $\sigma   \in 
N_{S(\Lambda)}(G)$  the  map  $(E,\mu)\mapsto  (E,\sigma  \mu  \sigma^{-1})$ 
transforms $N_{(D,m)}(U_1,\ldots,U_n)$ into 
$N_{(D,\sigma m \sigma^{-1})}(U_1,\ldots,U_n)$, so $N_{S(\Lambda)}(G)$ acts 
on $H^G_n(Y,y_0)$ by covering homeomorphism with respect to  $|\delta^{an}|$. 
By \cite[Corollary~4.5]{K7} this action is by covering automorphisms of the 
\'{e}tale cover $\delta:H^G_n(Y,y_0)\to (Y\setminus y_0)^{(n)}_{\ast}$.  The 
subgroup $N(\lambda_0)$ acts freely on $H^G_n(Y,y_0)$ 
(cf. Proposition~\ref{2.3}), so the quotient set 
$H^{\Lambda,G}_{n,\lambda_0}(Y,y_0)  =  H^G_n(Y,y_0)/N(\lambda_0)$   has   a 
structure of a smooth, affine algebraic variety \cite[Ch.III Prop.18]{Se} and
the quotient map $\nu$ is an \'{e}tale cover. One has 
$\delta = \delta_{1}\circ \nu$, so $\delta_{1}$ is a morphism by the 
universal property of quotients and it is an \'{e}tale cover since 
$\delta$ has this property and $\nu$ is \'{e}tale.
\end{proof}
\begin{block}\label{3.12c}
The quotient morphism 
$\nu:H^G_n(Y,y_0)\to H^{\Lambda,G}_{n,\lambda_0}(Y,y_0)$ yields a 
homeomorphism $|H^G_n(Y,y_0)^{an}|/N(\lambda_0) \overset{\sim}{\lto}
|H^{\Lambda,G}_{n,\lambda_0}(Y,y_0)^{an}|$ (cf. \cite[Lemma~2.5]{K7}), so \linebreak
$|H^{\Lambda,G}_{n,\lambda_0}(Y,y_0)^{an}|$ has a neighborhood basis 
consisting of the open sets 
\begin{equation}\label{e3.12c}
\begin{split}
N_{(D,m^{N(\lambda_0)})}(U_1,\ldots,U_n) &=
\nu(N_{(D,m)}(U_1,\ldots,U_n))\\
&=\{(E,m(E)^{N(\lambda_0)})| E\in N_D(U_1,\ldots,U_n)\}.
\end{split}
\end{equation}
For every $\sigma \in N(\lambda_0)$, $\sigma \neq 1$ one has 
$N_{(D,m)}(U_1,\ldots,U_n)\cap 
N_{(D,\sigma m \sigma^{-1})}(U_1,\ldots,U_n) = \emptyset$, so 
$|\delta_{1}^{an}|$ is a topological covering map and every open subset 
$N_D(U_1,\ldots,U_n)\subset   (Y\setminus   y_0)^{(n)}_{\ast}$   is   evenly 
covered:
\begin{equation}\label{e3.12cc}
\delta_{1}^{-1}(N_D(U_1,\ldots,U_n)) = 
\bigsqcup_{m^{N(\lambda_0)}} N_{(D,m^{N(\lambda_0)})}(U_1,\ldots,U_n).
\end{equation}
\end{block}
\begin{block}\label{3.13}
Let $\mathcal{C}(y_0)'=p^{-1}(Y\times H^G_n(Y,y_0)\setminus B)$.  We  recall 
from \cite[Section~3]{K7} that $\mathcal{C}(y_0)'$ is bijective to the set 
$\{(\Gamma_m[\alpha]_D,D,m)\}$ where $(D,m)\in H^G_n(Y,y_0)$, 
$\Gamma_{m}=\Ker(m:\pi_1(Y\setminus D,y_0)\to G)$, $\alpha$ is a path in 
$Y\setminus D$ with $\alpha(0)=y_{0}$ and $[\alpha]_{D}$ is its homotopy 
class in $Y\setminus D$. The map 
$p'=p|_{\mathcal{C}(y_0)'}:\mathcal{C}(y_0)'\to 
Y\times H^G_n(Y,y_0)\setminus B$, defined by $(\Gamma_m[\alpha]_D,D,m)
\mapsto (\alpha(1),(D,m))$, is a topological Galois covering map 
with respect to the topologies of the associated complex spaces,
with group of Deck transformations isomorphic to $G$, where the action of 
$G$ is defined as follows: if $g\in G$, $g=m([\sigma]_{D})$, then 
$g(\Gamma_m[\alpha]_D,D,m) = (\Gamma_m[\sigma\cdot \alpha]_D,D,m)$.
\par
In the setup of \S\ref{2.0} for every $(D,m)\in H^G_n(Y,y_0)$ let 
$\Gamma_{m,\lambda_{0}}=m^{-1}(G(\lambda_0))\subset 
\pi_1(Y\setminus D,y_0)$. Let $\mathcal{X}(y_0)'=
\varphi^{-1}(Y\times H^G_n(Y,y_0)\setminus B)$. By Proposition~\ref{3.12} 
$\mathcal{X}(y_0)'$ is isomorphic to $\mathcal{C}(y_0)'/G(\lambda_0)$ by a 
covering isomorphism over $Y\times H^G_n(Y,y_0)\setminus B$. The quotient 
$\mathcal{C}(y_0)'/G(\lambda_0)$ is bijective to the set 
$\{(\Gamma_{m,\lambda_0}[\alpha]_D,D,m)\}$ and the quotient morphism 
$\rho':\mathcal{C}(y_0)'\to \mathcal{C}(y_0)'/G(\lambda_0)$ is given by
 $(\Gamma_m[\alpha]_D,D,m)\mapsto (\Gamma_{m,\lambda_0}[\alpha]_D,D,m)$.
One has ${\mathcal{X}(y_0)'}^{an}\cong {\mathcal{C}(y_0)'}^{an}/G(\lambda_0)$ 
by Proposition~\ref{3.1a}. The map 
$\varphi':\mathcal{X}(y_0)'\to Y\times H^G_n(Y,y_0)\setminus B$ may be 
identified with 
\begin{equation}\label{e3.13}
(\Gamma_{m,\lambda_0}[\alpha]_D,D,m)\mapsto (\alpha(1),(D,m))
\end{equation}
and it is a topological covering map with respect to the topologies of 
the associated complex spaces.
\end{block}
\begin{proposition}\label{3.14}
For every $(D,m)\in H^G_n(Y,y_0)$ and every $\sigma  \in  N(\lambda_0)$  one 
has  $\Gamma_{m,\lambda_{0}}=\Gamma_{\sigma   m   \sigma^{-1},\lambda_{0}}$. 
Consider the left action of $N(\lambda_0)$ on  the  set  $\mathcal{X}(y_0)'$ 
defined by 
\[
\sigma (\Gamma_{m,\lambda_0}[\alpha]_D,D,m) = 
(\Gamma_{\sigma m \sigma^{-1},\lambda_0}[\alpha]_D,D,\sigma m \sigma^{-1}).
\]
This is an action by covering automorphisms of the composed \'{e}tale cover 
\begin{equation}\label{e3.14}
\mathcal{X}(y_0)'\to Y\times H^G_n(Y,y_0)\setminus B \to 
Y\times (Y\setminus y_0)^{(n)}_{\ast}\setminus A,
\end{equation}
where $A=\{(y,D)|y\in D\}$, and it can be uniquely extended to a left action 
of $N(\lambda_0)$ on $\mathcal{X}(y_0)$ by  covering  automorphisms  of  the 
composed cover 
\[
\mathcal{X}(y_0)\overset{\varphi}{\lto} Y\times H^G_n(Y,y_0)  
\overset{id\times \delta}{\lto}Y\times (Y\setminus 
y_0)^{(n)}_{\ast},
\]
The morphism $\varphi:\mathcal{X}(y_0)\to Y\times H^G_n(Y,y_0)$ is 
$N(\lambda_0)$-equivariant with respect to the action of $N(\lambda_0)$ on 
$Y\times H^G_n(Y,y_0)$ defined by $\sigma \ast (y,(D,m))=
(y,(D,\sigma m \sigma^{-1}))$ \linebreak
(cf. Proposition~\ref{3.12a}).
\end{proposition}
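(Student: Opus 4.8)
The plan is to first settle the group-theoretic identity, then recognize the displayed formula as the lift along the covering map $\varphi'$ of \S\ref{3.13} of an already-known automorphism of the base, extend this lift across the ramification divisor by normality, and finally read off the equivariance of $\varphi$ by density. First I would prove $\Gamma_{m,\lambda_0}=\Gamma_{\sigma m \sigma^{-1},\lambda_0}$ by direct calculation. Since $\Gamma_{m,\lambda_0}=m^{-1}(G(\lambda_0))$ (cf. \S\ref{3.13}), it suffices to show that for $[\alpha]\in \pi_1(Y\setminus D,y_0)$ one has $\sigma m([\alpha])\sigma^{-1}\in G(\lambda_0)$ if and only if $m([\alpha])\in G(\lambda_0)$. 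As $\sigma\in N(\lambda_0)$ satisfies $\lambda_0\sigma=\lambda_0$, hence $\lambda_0\sigma^{-1}=\lambda_0$, one computes $\lambda_0(\sigma m([\alpha])\sigma^{-1})=\lambda_0 m([\alpha])\sigma^{-1}$, and this equals $\lambda_0$ precisely when $\lambda_0 m([\alpha])=\lambda_0\sigma=\lambda_0$, i.e. when $m([\alpha])\in G(\lambda_0)$. Granting this, the coset $\Gamma_{\sigma m \sigma^{-1},\lambda_0}[\alpha]_D$ is literally the same subset of $\pi_1(Y\setminus D,y_0)$ as $\Gamma_{m,\lambda_0}[\alpha]_D$, so the assignment is a well-defined self-map of the set $\{(\Gamma_{m,\lambda_0}[\alpha]_D,D,m)\}$ of \S\ref{3.13}; the relation $(\sigma_1\sigma_2)m(\sigma_1\sigma_2)^{-1}=\sigma_1(\sigma_2 m\sigma_2^{-1})\sigma_1^{-1}$ then shows it is a left action of $N(\lambda_0)$.

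Next I would show this action is by covering automorphisms of the étale cover \eqref{e3.14}. By \S\ref{3.13} the map $\varphi':\mathcal{X}(y_0)'\to Y\times H^G_n(Y,y_0)\setminus B$ is a topological covering map, and by Proposition~\ref{3.12a} the map $(y,(D,m))\mapsto(y,(D,\sigma m\sigma^{-1}))$ is a covering automorphism of $id\times\delta$ over $Y\times (Y\setminus y_0)^{(n)}_{\ast}\setminus A$. Applying $\varphi'$ (cf. \eqref{e3.13}) to the two sides of the displayed formula gives $(\alpha(1),(D,m))$ and $(\alpha(1),(D,\sigma m \sigma^{-1}))$ respectively, so the action is a set-theoretic lift through $\varphi'$ of that base automorphism, and in particular commutes with the projection of \eqref{e3.14} to $Y\times (Y\setminus y_0)^{(n)}_{\ast}\setminus A$. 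Continuity follows from the explicit neighborhood basis: the base automorphism permutes the evenly covered neighborhoods of \eqref{e2.16a}, so the lift carries the corresponding sheets of $\varphi'$ homeomorphically onto one another, with inverse the lift attached to $\sigma^{-1}$. Hence each $\sigma$ acts biholomorphically, and, as in Proposition~\ref{3.12a}, by \cite[Corollary~4.5]{K7} it acts as an algebraic covering automorphism of \eqref{e3.14}.

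Finally I would extend the action across the ramification divisor and deduce the equivariance of $\varphi$. The variety $\mathcal{X}(y_0)$ is smooth (Proposition~\ref{3.8}) and the composite $\psi=(id\times\delta)\circ\varphi:\mathcal{X}(y_0)\to Y\times (Y\setminus y_0)^{(n)}_{\ast}$ is finite, with $\mathcal{X}(y_0)'=\psi^{-1}(Y\times (Y\setminus y_0)^{(n)}_{\ast}\setminus A)=\mathcal{X}(y_0)\setminus\varphi^{-1}(B)$ open and dense, its complement the ramification divisor. Each connected component of $\mathcal{X}(y_0)$ is thus normal and equals the normalization of the irreducible normal variety $Y\times (Y\setminus y_0)^{(n)}_{\ast}$ in its function field; the automorphism of $\mathcal{X}(y_0)'$ attached to $\sigma$ lies over this base and permutes the components, so by the universal property of normalization it extends to an automorphism of $\mathcal{X}(y_0)$, uniquely, since two extensions agreeing on the dense subset $\mathcal{X}(y_0)'$ of the reduced, separated scheme $\mathcal{X}(y_0)$ coincide. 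That the extension remains a covering automorphism is checked locally in the coordinates $z^{k}=t-t_{j}$, $\rho_{1}=(z,t_1,\ldots,t_n)$ of Proposition~\ref{3.8}, in which $\sigma$ merely relabels the local models. The equivariance of $\varphi$ for $\sigma\ast(y,(D,m))=(y,(D,\sigma m\sigma^{-1}))$ holds on $\mathcal{X}(y_0)'$ by the computation above, and hence on all of $\mathcal{X}(y_0)$, because $\varphi\circ\sigma$ and $(\sigma\ast-)\circ\varphi$ are morphisms into the separated variety $Y\times H^G_n(Y,y_0)$ agreeing on a dense subset. I expect the extension step to be the main obstacle: one must guarantee that the automorphism defined only on $\mathcal{X}(y_0)'$ prolongs across the codimension-one ramification locus while still covering the base. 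The normalization argument handles existence and uniqueness abstractly, and the local normal form of Proposition~\ref{3.8} supplies the concrete verification that the prolongation respects the covering structure.
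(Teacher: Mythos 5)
Your skeleton is the same as the paper's: prove the group identity $\Gamma_{m,\lambda_0}=\Gamma_{\sigma m\sigma^{-1},\lambda_0}$, show the action consists of covering homeomorphisms of the composed topological covering, upgrade to algebraic covering automorphisms via \cite[Corollary~4.5]{K7}, extend across the ramification locus by a normalization argument, and obtain the equivariance of $\varphi$ by density and separatedness. The group identity, the uniqueness of the extension, and the density argument are all correct. Your extension step differs slightly but harmlessly from the paper's: you realize each connected component of $\mathcal{X}(y_0)$ as the normalization of $Y\times (Y\setminus y_0)^{(n)}_{\ast}$ in its function field (using smoothness from Proposition~\ref{3.8}), whereas the paper normalizes $Y\times H$ for each connected component $H$ of $H^G_n(Y,y_0)$, invoking irreducibility of $\mathcal{C}(y_0)'_H$ from \cite[\S3.16]{K7}; both are valid, and yours avoids that citation.

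The one step that does not hold as you state it is the continuity of the action on $\mathcal{X}(y_0)'$. Knowing that the action is a set-theoretic lift through $\varphi'$ of the covering automorphism $(y,(D,m))\mapsto (y,(D,\sigma m\sigma^{-1}))$, and that this automorphism permutes evenly covered neighborhoods, does \emph{not} imply continuity of the lift: a set-theoretic lift can scramble points within fibers (e.g.\ swap two points of a single fiber over the identity of the base), so ``the lift carries the corresponding sheets of $\varphi'$ homeomorphically onto one another'' is precisely what has to be proved, not a consequence of the base behavior. Two ingredients are missing. First, an explicit basis of open sets of ${\mathcal{X}(y_0)'}^{an}$: the sets in \eqref{e2.16a} that you cite are a basis downstairs in $H^G_n(Y,y_0)$, not sheets of $\varphi'$; the paper constructs the actual sheets $\overline{N}_{(\alpha,D,m)}(U,U_1,\ldots,U_n)$ of \eqref{e3.16} as images of the basis \eqref{e3.15a} of ${\mathcal{C}(y_0)'}^{an}$ recalled from \cite[Section~3]{K7}. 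Second, the compatibility $(\sigma m\sigma^{-1})(E)=\sigma\, m(E)\,\sigma^{-1}$ for $E\in N_D(U_1,\ldots,U_n)$, which follows from the uniqueness in diagram \eqref{e2.16}; this is exactly what makes the pointwise formula for the action send the sheet $\overline{N}_{(\alpha,D,m)}(U,U_1,\ldots,U_n)$ onto $\overline{N}_{(\alpha,D,\sigma m\sigma^{-1})}(U,U_1,\ldots,U_n)$, whence continuity and, by \cite[Corollary~4.5]{K7}, algebraicity. With these two points supplied your argument is complete; without them the continuity claim is a genuine gap.
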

\begin{proof}
For every $\sigma \in N(\lambda_0)$ one has 
$\sigma G(\lambda_0) \sigma^{-1}=G(\lambda_0)$, so 
$\Gamma_{\sigma m \sigma^{-1},\lambda_{0}}=\Gamma_{m,\lambda_{0}}$. A basis 
of open sets of the topology of ${\mathcal{C}(y_0)'}^{an}$  was  constructed 
in \cite[Section~3]{K7} as follows. Let 
$(y,(D,m))\in Y\times H^G_n(Y,y_0)\setminus B$, $D=\{b_1,\ldots,b_n\}$. Let 
$\overline{U},   \overline{U}_{1},\ldots,\overline{U}_{n}$    be    disjoint 
embedded  closed  disks  in  $Y$   with   interiors   $U,U_{1},\ldots,U_{n}$ 
respectively,  such  that  $y\in  U$,  $\overline{U}_{i}\subset   Y\setminus 
\{y_{0}\}$, $b_{i}\in U_{i}$, $i=1,\ldots,n$. Let 
$\alpha :I\to Y\setminus \cup_{i=1}^{n}\overline{U}_{i}$ be a path such that 
$\alpha(0)=y_{0}$, $\alpha(1)=y$. Let 
\begin{equation}\label{e3.15a}
\begin{split}
N_{(\alpha,D,m)}(U,U_1,\ldots,U_n)=
\{&(\Gamma_{m(E)}[\alpha\cdot \tau]_E,E,m(E))|\\
& E\in N_D(U_1,\ldots,U_n), \tau:I\to U, \tau(0)=y\}.
\end{split}
\end{equation}
The family of these subsets of $\mathcal{C}(y_0)'$ is a basis of open sets 
of the topology of ${\mathcal{C}(y_0)'}^{an}$. Furthermore 
(cf. \S\ref{3.13})
\begin{equation}\label{e3.15}
{p'}^{-1}(U\times N_{(D,m)}(U_1,\ldots,U_n)) =
\bigsqcup_{j=1}^{|G|}N_{(\alpha_{j},D,m)}(U,U_1,\ldots,U_n),
\end{equation}
where $m([\alpha_{j}\cdot \alpha_{i}^{-}])\neq 1$ for $i\neq j$. Let 
$h\in G(\lambda_0)$, $h=m([\eta])$, where $\eta$ is a loop in 
$Y\setminus \cup_{i=1}^n\overline{U}_i$. Then 
$hN_{(\alpha,D,m)}(U,U_1,\ldots,U_n)=
N_{(\eta\cdot\alpha,D,m)}(U,U_1,\ldots,U_n)$. We see that acting on 
$\mathcal{C}(y_0)'$ the group $G(\lambda_0)$ permutes the open sets  on  the 
right-hand side of \eqref{e3.15} and the image of
 $N_{(\alpha,D,m)}(U,U_1,\ldots,U_n)$ in $\mathcal{X}(y_0)$ is the set 
\begin{equation}\label{e3.16}
\begin{split}
\overline{N}_{(\alpha,D,m)}(U,U_1,\ldots,U_n) =
\{&(\Gamma_{m(E),\lambda_{0}}[\alpha\cdot \tau]_E,E,m(E))|\\
&E\in N_D(U_1,\ldots,U_n), \tau:I\to U, \tau(0)=y\}.
\end{split}
\end{equation}
These sets form a basis of open sets of the quotient complex topology of
$\mathcal{X}(y_0)'\cong \mathcal{C}(y_0)'/G(\lambda_0)$ and 
\[
{\varphi'}^{-1}(U\times N_{(D,m)}(U_1,\ldots,U_n)) =
\bigsqcup_{i=1}^{d}\overline{N}_{(\alpha_{i},D,m)}(U,U_1,\ldots,U_n),
\]
where $m([\alpha_{j}\cdot \alpha_{i}^{-}])\notin G(\lambda_0)$ for 
$i\neq j$.
\par
Let $\sigma \in N(\lambda_0)$. Then 
$\sigma\overline{N}_{(\alpha,D,m)}(U,U_1,\ldots,U_n)
=   \overline{N}_{(\alpha,D,\sigma    m    \sigma^{-1})}(U,U_1,\ldots,U_n)$. 
Therefore $N(\lambda_0)$ acts on $\mathcal{X}(y_0)'$ by covering 
homeomorphisms of the composed topological covering
\[
\mathcal{X}(y_0)'\overset{\varphi'}{\lto} Y\times H^G_n(Y,y_0)\setminus B  
\lto Y\times (Y\setminus y_0)^{(n)}_{\ast}\setminus A.
\]
By \cite[Corollary~4.5]{K7} $N(\lambda_0)$ acts  on  $\mathcal{X}(y_0)'$  by 
covering automorphisms of the composed \'{e}tale cover \eqref{e3.14}. It  is 
clear from \eqref{e3.13} that $\varphi'$ is $N(\lambda_0)$-equivariant.
\par
Let $H$ be a connected component of $H^G_n(Y,y_0)$. Let 
$\mathcal{X}(y_0)'_{H}={\varphi'}^{-1}(Y\times H\setminus B)$.  This  is  an 
irreducible algebraic variety, quotient of the irreducible variety \linebreak
${p'}^{-1}(Y\times H\setminus B) = \mathcal{C}(y_0)'_{H}$ 
(cf. \cite[\S3.16]{K7}) and the smooth variety 
$\mathcal{X}(y_0)_{H}= \linebreak
\varphi^{-1}(Y\times H)$ is the normalization of 
$Y\times H$ in the field of rational functions 
$\mathbb{C}(\mathcal{X}(y_0)'_{H})$. Given $\sigma  \in  N(\lambda_0)$,  the 
map $x\mapsto \sigma x$ defines an isomorphism 
$\mathcal{X}(y_0)'_{H}\overset{\sim}{\lto}
\mathcal{X}(y_0)'_{\sigma \ast H}$. Passing to normalizations this 
isomorphism extends in a unique way to an isomorphism 
$\mathcal{X}(y_0)_{H}\overset{\sim}{\lto}\mathcal{X}(y_0)_{\sigma \ast H}$.  This 
defines an action of $N(\lambda_0)$ on $\mathcal{X}(y_0)$ with the  required 
properties.
\end{proof}
\begin{block}\label{3.18}
The  group   $N(\lambda_0)$   acts   on   $\mathcal{X}(y_0)$   by   covering 
automorphisms of the cover 
$\mathcal{X}(y_0)\to Y\times (Y\setminus y_0)^{(n)}_{\ast}$. We denote by 
$\mathcal{X}(y_0,\lambda_0)$ the quotient algebraic variety 
$\mathcal{X}(y_0)/N(\lambda_0)$. The $N(\lambda_0)$-equivariant cover 
$\varphi:\mathcal{X}(y_0)\to Y\times H^G_n(Y,y_0)$ descends to a cover 
$\phi:\mathcal{X}(y_0,\lambda_0)\to
Y\times  H^{\Lambda,G}_{n,\lambda_0}(Y,y_0)$.   One   has   the   following 
commutative diagram:
\begin{equation}\label{e3.18}
\xymatrix{
\mathcal{X}(y_0)\ar[r]^-{\kappa}\ar[d]_-{\varphi}&\mathcal{X}(y_{0},\lambda_0)
\ar[d]^-{\phi}\\
Y\times H^G_n(Y,y_0)\ar[r]^-{id\times \nu}&Y\times 
H^{\Lambda,G}_{n,\lambda_0}(Y,y_0),
}
\end{equation}
where $\kappa$ and $\nu$ are the quotient morphisms with respect to the 
actions of $N(\lambda_0)$. The morphism $\kappa \circ \eta:
H^G_n(Y,y_0)\to \mathcal{X}(y_0,\lambda_0)$ maps $(D,m)$ to 
$(\Gamma_{m,\lambda_0}[c_{y_{0}}]_D,D,m^{N(\lambda_0)})$, where 
$m^{N(\lambda_0)}=\{\sigma m \sigma^{-1}|\sigma \in N(\lambda_0)\}$, so 
$\kappa \circ \eta$ is $N(\lambda_0)$-invariant and can be decomposed as 
$\xi \circ \nu$, where 
$\xi:H^{\Lambda,G}_{n,\lambda_0}(Y,y_0)\to 
\mathcal{X}(y_0,\lambda_0)$ is a morphism with the property that 
$\phi(\xi(D,m^{N(\lambda_0)}))=(y_{0},(D,m^{N(\lambda_0)}))$ for 
$\forall (D,m^{N(\lambda_0)})\in H^{\Lambda,G}_{n,\lambda_0}(Y,y_0)$.
\par
The closed algebraic subset $B\subset Y\times H^G_n(Y,y_0)$ is 
$N(\lambda_0)$-invariant. Let $\mathcal{B}= \linebreak
id\times \nu(B)$ and 
let $\mathcal{X}(y_0,\lambda_0)'=
\mathcal{X}(y_0,\lambda_0)\setminus \phi^{-1}(\mathcal{B})$. Then 
$\mathcal{X}(y_0,\lambda_0)' = \mathcal{X}(y_0)'/N(\lambda_0)$ and
$|{\mathcal{X}(y_0,\lambda_0)'}^{an}| = 
|{\mathcal{X}(y_0)'}^{an}|/N(\lambda_0)$ (cf. \cite[Lemma~2.5]{K7}).
 For every \linebreak
$(y,(D,m^{N(\lambda_0)}))\in 
Y\times H^{\Lambda,G}_{n,\lambda_0}(Y,y_0)\setminus \mathcal{B}$ and 
every $z=(\Gamma_{m,\lambda_0}[\alpha]_D,D,m^{N(\lambda_0)})
\in \mathcal{X}(y_0,\lambda_0)'$ such that 
$\phi(z)=(y,(D,m^{N(\lambda_0)}))$, the sets 
\begin{equation}\label{e3.18a}
\begin{split}
N_{(\alpha,D,m^{N(\lambda_0)})}(U,U_1,\ldots,U_n)
=\{&(\Gamma_{m(E),\lambda_0}[\alpha\cdot \tau]_E,E,m(E)^{N(\lambda_0)})|\\
&E\in N_D(U_1,\ldots,U_n), \tau:I\to U, \tau(0)=y\}
\end{split}
\end{equation}
form a basis of neighborhoods of $z$ in the topology of 
${\mathcal{X}(y_0,\lambda_0)'}^{an}$      (cf.      \eqref{e3.15a})      and 
\eqref{e3.16}).
\end{block}
\begin{theorem}\label{3.19}
The pair
\begin{equation}\label{e3.19}
(\phi:\mathcal{X}(y_0,\lambda_0)\to 
Y\times H^{\Lambda,G}_{n,\lambda_0}(Y,y_0),
\xi:H^{\Lambda,G}_{n,\lambda_0}(Y,y_0)\to \mathcal{X}(y_0,\lambda_0))
\end{equation}
is a smooth, proper family of pointed $(\Lambda,G)$-covers of $(Y,y_0)$ branched 
in $n$ points. The variety $\mathcal{X}(y_0,\lambda_0)$ is 
smooth. The branch locus of $\phi$ is $\mathcal{B}$. 
The fiber of the family over 
$(D,m^{N(\lambda_0)})\in H^{\Lambda,G}_{n,\lambda_0}(Y,y_0)$ is a 
pointed $(\Lambda,G)$-cover of $(Y,y_0)$ with monodromy invariant 
$(D,m^{N(\lambda_0)})$.  Every  pointed  $(\Lambda,G)$-cover  of  $(Y,y_0)$ 
branched in $n$ points is equivalent to a unique fiber of the family 
\eqref{e3.19} by a unique covering isomorphism.
\end{theorem}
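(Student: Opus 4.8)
The plan is to descend every asserted property from the $N(\lambda_0)$-equivariant family $(\varphi:\mathcal{X}(y_0)\to Y\times H^G_n(Y,y_0),\eta)$ of Proposition~\ref{3.8} across the free quotient by $N(\lambda_0)$ constructed in \S\ref{3.18}. First I would verify that $N(\lambda_0)$ acts freely on $\mathcal{X}(y_0)$: if $\sigma x=x$, then since $\varphi$ is $N(\lambda_0)$-equivariant (Proposition~\ref{3.14}), $\sigma$ fixes $\varphi(x)=(y,(D,m))$, hence fixes $(D,m)\in H^G_n(Y,y_0)$; as $N(\lambda_0)$ acts freely on $H^G_n(Y,y_0)$ (Proposition~\ref{3.12a}) this forces $\sigma=1$. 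Consequently the quotient map $\kappa:\mathcal{X}(y_0)\to\mathcal{X}(y_0,\lambda_0)$ is a finite \'etale cover, and since $\mathcal{X}(y_0)$ is smooth (Proposition~\ref{3.8}) so is $\mathcal{X}(y_0,\lambda_0)$.

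Next I would establish that $\pi_2\circ\phi$ is smooth and proper using the commutative square \eqref{e3.18}, in which both $\kappa$ and $id\times\nu$ are finite \'etale (Proposition~\ref{3.12a}). Since $\pi_2\circ\varphi$ is proper and $\nu$ is finite, the composite $\nu\circ\pi_2\circ\varphi=(\pi_2\circ\phi)\circ\kappa$ is proper; as $\kappa$ is proper and surjective, properness of $\pi_2\circ\phi$ follows by descent. Likewise smoothness of $\pi_2\circ\varphi$ together with the \'etaleness and surjectivity of $\kappa$ yields smoothness of $\pi_2\circ\phi$. To read off the fibers I would note that over a point $(D,m^{N(\lambda_0)})$ the morphism $\kappa$ restricts to an isomorphism $\mathcal{X}(y_0)_{(D,m)}\overset{\sim}{\lto}\mathcal{X}(y_0,\lambda_0)_{(D,m^{N(\lambda_0)})}$, because $N(\lambda_0)$ permutes the $|N(\lambda_0)|$ pairwise distinct fibers lying over the orbit freely. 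Hence each fiber is irreducible and, by Proposition~\ref{3.8}, the cover $\phi_{(D,m^{N(\lambda_0)})}$ is a $(\Lambda,G)$-cover with monodromy invariant $(D,m^{N(\lambda_0)})$; the relation $\kappa\circ\eta=\xi\circ\nu$ from \S\ref{3.18} shows that $\xi$ carries $\eta(D,m)$ to the correct point of this fiber, so by Definition~\ref{2.15a}(i) the pair \eqref{e3.19} is a smooth, proper family of pointed $(\Lambda,G)$-covers of $(Y,y_0)$ branched in $n$ points. Since $\kappa$ and $id\times\nu$ are \'etale, the branch locus of $\phi$ is exactly the image $\mathcal{B}=(id\times\nu)(B)$ of the branch locus $B$ of $\varphi$.

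Finally I would deduce universality from Proposition~\ref{2.7}. Given an arbitrary pointed $(\Lambda,G)$-cover $(f:X\to Y,x_0)$ of $(Y,y_0)$ branched in $n$ points, let $(D,m^{N(\lambda_0)})$ be its monodromy invariant; by the previous step the fiber of \eqref{e3.19} over this point has the same monodromy invariant, so the bijection of Proposition~\ref{2.7} forces the cover and this fiber to be equivalent. Injectivity of that same bijection shows the fiber is the unique one to which $(f,x_0)$ is equivalent, while uniqueness of the covering isomorphism between pointed covers was already established in \S\ref{2.1a}.

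The step I expect to require the most care is the fibral analysis of the second paragraph: I must be certain that passing to the free quotient by $N(\lambda_0)$ commutes with restriction to fibers, so that each fiber of $\phi$ genuinely realizes the monodromy invariant labelling its base point and is marked correctly by $\xi$. Once this compatibility is secured, smoothness and properness follow by routine \'etale descent and the universality assertion becomes a direct translation of Proposition~\ref{2.7}.
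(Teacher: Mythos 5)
Your proposal is correct and follows essentially the same route as the paper's proof: freeness of the $N(\lambda_0)$-action on $\mathcal{X}(y_0)$ deduced from equivariance of $\varphi$ over the free action on $H^G_n(Y,y_0)$, smoothness and properness of $\pi_2\circ\phi$ read off from diagram \eqref{e3.18} with $\kappa$ and $\nu$ \'etale, identification of each fiber of $\phi$ with the (mutually equivalent) fibers of $\varphi$ over the $N(\lambda_0)$-orbit together with the marking $\kappa\circ\eta=\xi\circ\nu$, and the final classification statement via Proposition~\ref{2.7} and \S\ref{2.1a}. The only cosmetic difference is that the paper gets properness of $\pi_2\circ\phi$ directly from finiteness of $\phi$ composed with the proper projection $\pi_2$, whereas you descend it along the proper surjective $\kappa$; both are valid.
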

\begin{proof}
The morphism $\varphi:\mathcal{X}(y_0)\to 
Y\times H^G_n(Y,y_0)$ is $N(\lambda_0)$-equivariant and $N(\lambda_0)$ acts 
freely on  $H^G_n(Y,y_0)$  (cf.   Proposition~\ref{2.3}). 
Therefore   the   action   of   $N(\lambda_0)$   on   the   smooth   variety 
$\mathcal{X}(y_0)$ is free, which implies that the  quotient 
algebraic variety $\mathcal{X}(y_0,\lambda_0)$ is smooth. The morphisms 
$\kappa$ and $\nu$ of \eqref{e3.18} are \'{e}tale covers, so the composition 
$\pi_2\circ \phi:\mathcal{X}(y_0,\lambda_0)\to H^{\Lambda,G}_{n,\lambda_0}(Y,y_0)$
is smooth, since 
$\mathcal{X}(y_0)\to H^G_n(Y,y_0)$ is smooth by Proposition~\ref{3.8}. Furthermore
$\pi_2\circ \phi$ is proper, since $\phi$ is finite and $\pi_2$ is proper.
Acting by $N(\lambda_0)$ on $\mathcal{X}(y_0)$, 
every $\sigma \in N(\lambda_0)$ transforms the 
pointed $(\Lambda,G)$-cover  
$(\mathcal{X}(y_0)_{(D,m)}\to  Y,\eta(D,m))$  of  $(Y,y_0)$  with  monodromy 
invariant   $(D,m^{N(\lambda_0)})$   (cf. Proposition~\ref{3.8})   into   the 
equivalent pointed cover 
$(\mathcal{X}(y_0)_{(D,\sigma m \sigma^{-1})}\to 
Y,\eta(D,\sigma m \sigma^{-1}))$ of $(Y,y_0)$, therefore the fiber of 
the family \eqref{e3.19} over $(D,m^{N(\lambda_0)})$ is a 
pointed cover of $(Y,y_0)$ equivalent to any of these ones. This proves 
the stated properties of the family \eqref{e3.19}. The last statement of 
the theorem follows from Proposition~\ref{2.7} and \S\ref{2.1a}.
\end{proof}
\begin{remark}\label{3.20a}
The local analytic form of 
$\phi:\mathcal{X}(y_0,\lambda_0)\to 
Y\times H^{\Lambda,G}_{n,\lambda_0}(Y,y_0)$ at the points lying over 
the branch locus of $\phi$ is the same as that of 
$\varphi:\mathcal{X}(y_0)\to Y\times H^G_n(Y,y_0)$. In fact, in 
the commutative diagram \eqref{e3.18} $\varphi$ is 
$N(\lambda_0)$-equivariant and the action of $N(\lambda_0)$ is free. 
Let $x_{1}\in \mathcal{X}(y_0,\lambda_0)$ be a point such 
that $\phi(x_{1})$ belongs to $\mathcal{B}$. Let 
$x\in \mathcal{X}(y_0)$ be a point such that $\kappa(x)=x_{1}$ and let 
$\varphi(x)=(b_{j},(D,m))$, where $D=\{b_1,\ldots,b_j,\ldots,b_n\}$ 
(cf. Proposition~\ref{3.8}). Acting on $Y\times H^G_n(Y,y_0)$ by 
$\sigma \ast (y,(D,m))= (y,(D,\sigma m \sigma^{-1}))$, every
$\sigma\in N(\lambda_0)$ transforms $V=U\times N_{(D,m)}(U_1,\ldots,U_n)$ 
into the open set $\sigma  V = U\times N_{(D,\sigma m 
\sigma^{-1})}(U_1,\ldots,U_n)$ 
and $\sigma_1  V\cap \sigma_2  V = \emptyset$ if $\sigma_1 \neq \sigma_2$.
 Respectively, 
acting on $\mathcal{X}(y_0)$, the group $N(\lambda_0)$ permutes 
$\varphi^{-1}(\sigma  V$), 
$\sigma \in N(\lambda_0)$. If $A$ is the neighborhood of 
$x$ as in Proposition~\ref{3.8} let $A_{1}=\kappa(A)$ and let 
$V_{1}=(id_{Y}\times \nu)(V) = 
U\times   N_{(D,m^{N(\lambda_0)})}(U_1,\ldots,U_n)$.   Then   one   has    a 
commutative diagram of holomorphic maps 
\[
\xymatrix{
A\ar[r]^-{\kappa^{an}}\ar[d]_-{\varphi|_A}&A_1\ar[d]^-{\phi|_{A_1}}\\
V\ar[r]^-{id\times \nu^{an}}&V_1
}
\]
in which the horizontal maps are biholomorphic. Therefore statements analogous 
to (i) and (ii) of Proposition~\ref{3.8} hold for 
$\phi|_{A_{1}}:A_{1}\to V_{1}$.
\end{remark}
\begin{proposition}\label{3.23}
Let ($f:X\to Y\times S,\eta :S\to X$) be a smooth, proper family of 
pointed $(\Lambda,G)$-covers of $(Y,y_0)$ branched in $n$ points. Let 
$B\subset Y\times S$ be the branch locus of $f$. 
For every fiber $(f_s:X_s\to Y,\eta(s))$ let $(\varepsilon_s,m_s)$ be a 
pair which satisfies the conditions of Proposition~\ref{2.3}(i) 
(cf. Proposition~\ref{2.15b}). 
For every $s\in S$ let 
$u(s)=(B_{s},m_{s}^{N(\lambda_0)})\in H^{\Lambda,G}_{n,\lambda_0}(Y,y_0)$ 
be the monodromy invariant of $(f_s:X_s\to Y,\eta(s))$. Then 
$u:S\to H^{\Lambda,G}_{n,\lambda_0}(Y,y_0)$ is a morphism.
\end{proposition}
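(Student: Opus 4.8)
The plan is to realize $u$ as a lift, through the finite \'{e}tale cover $\delta_{1}:H^{\Lambda,G}_{n,\lambda_0}(Y,y_0)\to (Y\setminus y_0)^{(n)}_{\ast}$ of Proposition~\ref{3.12a}, of the branch-locus morphism $\beta:S\to (Y\setminus y_0)^{(n)}_{\ast}$, $\beta(s)=B_{s}$, which is a morphism by \cite[Proposition~2.6]{K7} (cf. the proof of Lemma~\ref{2.17}). By the very definition of $u$ one has $\delta_{1}\circ u=\beta$. First I would show that $|u^{an}|$ is continuous, hence holomorphic; then, using that $\delta_{1}$ is finite \'{e}tale, I would upgrade the holomorphic lift to a morphism of algebraic varieties.

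To prove continuity, fix $s_{0}\in S$ and set $D=B_{s_{0}}=\beta(s_{0})$. Choose disjoint closed disks $\overline{U}_{1},\dots,\overline{U}_{n}\subset Y\setminus y_{0}$ around the points of $D$ as in \S\ref{2.16}, producing the basic neighborhood $N_{D}(U_{1},\dots,U_{n})$ of $D$. Running the construction in the proof of Lemma~\ref{2.17}, I obtain a connected neighborhood $V$ of $s_{0}$ in $|S^{an}|$ with $\beta(V)\subset N_{D}(U_{1},\dots,U_{n})$, sheets $W_{\lambda}\ni\varepsilon(\lambda)$ of $f$ over $U\times V$, and for each $s\in V$ a pair $(\varepsilon_{s},m(\beta(s)))$ satisfying Proposition~\ref{2.3}(i) for $f_{s}$, with $\varepsilon_{s}(\lambda)=f^{-1}(y_{0},s)\cap W_{\lambda}$ (cf. \eqref{e2.18a}). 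The one extra point to verify is the marking condition $\varepsilon_{s}(\lambda_{0})=\eta(s)$: choosing the labelling so that $W_{\lambda_{0}}$ is the sheet containing $\eta(s_{0})$ and shrinking $V$, continuity of the section $\eta$ forces $\eta(s)\in W_{\lambda_{0}}$, whence $\varepsilon_{s}(\lambda_{0})=\eta(s)$ for all $s\in V$. Thus $m(\beta(s))$ is a legitimate monodromy for the pointed cover $(f_{s},\eta(s))$, and therefore
\[
u(s)=\bigl(\beta(s),\,m(\beta(s))^{N(\lambda_0)}\bigr)\qquad (s\in V).
\]
By \eqref{e3.12c} the assignment $E\mapsto (E,m(E)^{N(\lambda_0)})$ is precisely the local inverse of $\delta_{1}$ onto the sheet $N_{(D,m^{N(\lambda_0)})}(U_{1},\dots,U_{n})$; since $\delta_{1}^{an}$ is a local biholomorphism this inverse is holomorphic, so $u|_{V}$ is the composition of the holomorphic $\beta|_{V}$ with a holomorphic local section of $\delta_{1}$. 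As $s_{0}$ was arbitrary, $u^{an}$ is holomorphic on all of $S^{an}$.

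It remains to deduce that $u$ is algebraic. Form the fibre product $P=S\times_{(Y\setminus y_0)^{(n)}_{\ast}}H^{\Lambda,G}_{n,\lambda_0}(Y,y_0)$ with projections $\mathrm{pr}_{1}:P\to S$ and $\mathrm{pr}_{2}:P\to H^{\Lambda,G}_{n,\lambda_0}(Y,y_0)$; as a base change of the finite \'{e}tale cover $\delta_{1}$, the map $\mathrm{pr}_{1}$ is again a finite \'{e}tale cover. Since analytification commutes with fibre products and $\delta_{1}^{an}\circ u^{an}=\beta^{an}$, the pair $(\mathrm{id},u^{an})$ defines a holomorphic section $\tilde{u}:S^{an}\to P^{an}$ of the topological covering map $|\mathrm{pr}_{1}^{an}|$. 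Its image is open and closed, hence equals the analytification $(P_{0})^{an}$ of an open and closed subscheme $P_{0}\subset P$ (the connected components of $P^{an}$ being the analytifications of those of $P$). Then $\mathrm{pr}_{1}|_{P_{0}}:P_{0}\to S$ is a finite \'{e}tale cover of degree $1$, since $P_{0}$ meets each fibre of $\mathrm{pr}_{1}$ in the single point $\tilde{u}(s)$, so it is an isomorphism and
\[
u=\mathrm{pr}_{2}|_{P_{0}}\circ(\mathrm{pr}_{1}|_{P_{0}})^{-1}
\]
is a morphism, visibly agreeing with $u$ on points.

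The substantive step is the local identification $u(s)=(\beta(s),m(\beta(s))^{N(\lambda_0)})$ on $V$, which is where the local triviality of the family furnished by Lemma~\ref{2.17} enters; the only genuinely new ingredient there is forcing $\varepsilon_{s}(\lambda_{0})=\eta(s)$ through continuity of $\eta$, so that the $N(\lambda_0)$-class of the monodromy is recorded correctly. By contrast, the passage from a holomorphic lift to an algebraic morphism is formal once $\delta_{1}$ is known to be finite \'{e}tale, being the standard correspondence between continuous sections of a finite \'{e}tale cover and open-and-closed subschemes mapping isomorphically to the base.
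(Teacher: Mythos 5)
Your proposal is correct and follows essentially the same route as the paper: both realize $u$ as a continuous lift of the branch-locus morphism $\beta$ through the \'{e}tale cover $\delta_{1}$ of Proposition~\ref{3.12a}, using the local trivialization constructed in the proof of Lemma~\ref{2.17}, and both hinge on the same substantive verification of the marking condition $\varepsilon_{s}(\lambda_{0})=\eta(s)$ via continuity of the section $\eta$ (the paper argues with connectedness of $V$ and the component decomposition of $f^{-1}(\{y_{0}\}\times V)$, you shrink $V$ so that $\eta(V)\subset W_{\lambda_{0}}$; both are valid). The only divergence is the final formal step: where the paper simply cites \cite[Corollary~4.5]{K7} to pass from continuity to algebraicity, you prove this by hand, identifying the graph of $u^{an}$ with an open-and-closed subset of the analytification of the finite \'{e}tale fiber product $P=S\times_{(Y\setminus y_0)^{(n)}_{\ast}}H^{\Lambda,G}_{n,\lambda_0}(Y,y_0)$ and invoking the correspondence between connected components of $P$ and of $P^{an}$; this is a correct, standard argument that makes the proof self-contained at the cost of some length.
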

\begin{proof}
The map $\beta:S\to (Y\setminus y_0)^{(n)}_{\ast}\subset Y^{(n)}$ given 
by $\beta(s)=B_{s}$ is a morphism by \cite[Proposition~2.6]{K7}. The map 
$u$ fits in the following commutative diagram:
\[
\xymatrix{
 &   H^{\Lambda,G}_{n,\lambda_0}(Y,y_0)\ar[d]^-{\delta_{1}}\\
S\ar[ru]^-u\ar[r]_-{\beta}&  (Y\setminus y_0)^{(n)}_{\ast}
}
\]
where $\delta_{1}$ is the \'{e}tale cover defined by 
$\delta_{1}(D,m^{N(\lambda_0)})=D$ (cf. Proposition~\ref{3.12a}). In 
order to prove that $u$ is a morphism it suffices to verify that $u$ is
continuous with respect to the topologies of $S^{an}$ 
and $H^{\Lambda,G}_{n,\lambda_0}(Y,y_0)^{an}$ 
 (cf. \cite[Corollary~4.5]{K7}), i.e. we have to show that for every 
$s\in S$ and every neighborhood $N$ of $u(s)$ the point $s$ is internal 
of $u^{-1}(N)$. Let $s_{0}$ be an arbitrary point of $S$. Let 
$\beta(s_{0})=D=\{b_1,\ldots,b_n\}$. 
Let ($\varepsilon :\Lambda \to f_{s_{0}}^{-1}(y_0)$,
$m:\pi_1(Y\setminus D,y_0)\to G$) be a pair which satisfies 
Condition~(i) of Proposition~\ref{2.3} with 
$\varepsilon(\lambda_{0})=\eta(s_{0})$. One has 
$u(s_{0})= (D,m^{N(\lambda_0)})$. Let 
$N_{(D,m^{N(\lambda_0)})}(U_1,\ldots,U_n)$ be any of the open sets of 
the neighborhood basis of 
$(D,m^{N(\lambda_0)})$ in $|H^{\Lambda,G}_{n,\lambda_0}(Y,y_0)^{an}|$ 
contained in $N$ (cf. \eqref{e3.12c}). 
In the course of the proof of Lemma~\ref{2.17} we showed 
that there exists a connected neighborhood $V\subset |S^{an}|$ of $s_0$
such that $\beta(V)\subset N_D(U_1,\ldots,U_n)$ and we 
constructed for every $s\in V$ a pair
$(\varepsilon'_s:\Lambda \to f^{-1}_s(y_0),m'_s=m(\beta(s)))$ 
as in \eqref{e2.18a}. For $\forall s\in S$ this pair satisfies 
the conditions of Proposition~\ref{2.3}(i). In fact, the first one was 
verified in the proof of Lemma~\ref{2.17}. We claim that the second one
is satisfied as well: $\varepsilon'_s(\lambda_0)=\eta(s)$ for $\forall s \in S$.
This holds since $\{y_{0}\}\times V$ is a connected subset 
of $U\times V$, so $f^{-1}(\{y_{0}\}\times V)$ is a disjoint union of 
$d=|\Lambda|$ connected components
$f^{-1}(\{y_{0}\}\times V)\cap W_{\lambda}$, $\lambda \in \Lambda$.
The map 
$\eta: S\to f^{-1}(\{y_{0}\}\times S)$ is continuous  with  respect  to  the 
canonical complex topologies and 
$\eta(s_{0})=\varepsilon(\lambda_{0})\in W_{\lambda_{0}}$. Therefore
 $\eta(V)$ is the connected component of $f^{-1}(\{y_{0}\times V\})$ contained 
in $W_{\lambda_{0}}$. This shows that 
$\varepsilon_{s}(\lambda_{0})=\eta(s)$ for $\forall s\in V$. 
Using Proposition~\ref{2.3} we conclude that for 
$\forall s\in V$ the monodromy invariant $u(s)$ of 
$(f_s:X_s\to Y,\eta(s))$ equals 
$(\beta(s),m(\beta(s))^{N(\lambda_0)})$, so 
$u(s)\in \nu(N_{(D,m)}(U_1,\ldots,U_n)) =
N_{(D,m^{N(\lambda_0)})}(U_1,\ldots,U_n)$ (cf. \eqref{e3.12c}). Therefore 
$V\subset u^{-1}(N)$.
\end{proof}
\begin{lemma}\label{3.43}
Let $f:X\to S$ and  $g:Z\to S$ be proper, surjective morphisms of  algebraic  varieties. 
Let $h:X\to Z$ be a morphism such that $f =  g\circ  h$.  Suppose  that  for 
every $s\in S$ the induced morphism of the scheme-theoretical fibers \linebreak
$h_{s}:X\otimes_{S}\mathbb{C}(s)\to   Z\otimes_{S}\mathbb{C}(s)$    is    an 
isomorphism. Then $h$ is an isomorphism.
\end{lemma}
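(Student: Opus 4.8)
The plan is to show successively that $h$ is proper, then quasi-finite, hence finite, and finally to analyze the coherent sheaf of algebras $\mathcal{A}=h_{\ast}\mathcal{O}_X$ fiber by fiber. First I would note that $h$ is proper: since $f=g\circ h$ is proper and $g$ is separated (both $Z$ and $S$ are algebraic varieties, hence separated over $\mathbb{C}$, so $g$ is separated), the cancellation property of proper morphisms gives that $h$ is proper. Next, $h$ is quasi-finite: for a point $z\in Z$ with $s=g(z)$, any $x\in h^{-1}(z)$ satisfies $f(x)=g(h(x))=s$, so $h^{-1}(z)\subset X_s$ and in fact $h^{-1}(z)=h_s^{-1}(z)$; as $h_s$ is an isomorphism this fiber is a single reduced point. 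The same computation, together with the surjectivity of each $h_s$ and of $g$, shows that $h$ is bijective on closed points. A proper, quasi-finite morphism is finite, so $h$ is finite and $\mathcal{A}=h_{\ast}\mathcal{O}_X$ is a coherent $\mathcal{O}_Z$-algebra with $X\cong \underline{\Spec}_Z\mathcal{A}$.

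Because $h$ is finite, hence affine, the formation of $\mathcal{A}$ commutes with arbitrary base change. Applying this to the inclusion $\iota_s:Z_s=Z\otimes_S\mathbb{C}(s)\hookrightarrow Z$ of a fiber, I would obtain $\iota_s^{\ast}\mathcal{A}=(h_s)_{\ast}\mathcal{O}_{X_s}$, and the structure map $\mathcal{O}_Z\to\mathcal{A}$ restricts along $\iota_s$ to the structure map $\mathcal{O}_{Z_s}\to (h_s)_{\ast}\mathcal{O}_{X_s}$. By hypothesis $h_s$ is an isomorphism, so the latter is an isomorphism for every $s$. Let $\mathcal{Q}$ be the cokernel of $\mathcal{O}_Z\to\mathcal{A}$. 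Since $\iota_s^{\ast}$ is right exact, $\iota_s^{\ast}\mathcal{Q}$ is the cokernel of the isomorphism $\iota_s^{\ast}(\mathcal{O}_Z\to\mathcal{A})$, whence $\iota_s^{\ast}\mathcal{Q}=0$; equivalently $\Supp\mathcal{Q}$ meets no fiber $Z_s$. As $g$ is surjective the fibers $Z_s$ cover the closed points of $Z$, so $\Supp\mathcal{Q}=\emptyset$ and $\mathcal{Q}=0$. Thus the structure morphism $\mathcal{O}_Z\to\mathcal{A}$ is surjective.

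It follows that $h$ is a closed immersion, with $\mathcal{A}=\mathcal{O}_Z/I$ for $I=\Ker(\mathcal{O}_Z\to\mathcal{A})$. Since $h$ is surjective, the closed set cut out by $I$ is all of $Z$, so $I$ is contained in the nilradical of $\mathcal{O}_Z$; and since $Z$, being an algebraic variety, is reduced, this nilradical is $(0)$. Hence $I=0$, the map $\mathcal{O}_Z\to h_{\ast}\mathcal{O}_X$ is an isomorphism, and therefore $h$ is an isomorphism. The step I expect to require the most care is precisely the surjectivity of $\mathcal{O}_Z\to\mathcal{A}$: injectivity cannot be detected fiberwise, because $\iota_s^{\ast}$ is only right exact, so it is essential to establish surjectivity first and then to invoke the reducedness of $Z$ to eliminate the kernel. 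The remaining ingredients—cancellation for properness, the implication proper plus quasi-finite gives finite, and the compatibility of finite pushforward with base change—are standard.
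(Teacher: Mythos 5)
Your proof is correct, but it takes a genuinely different route from the paper's. The paper's own proof is much shorter: it first notes that $h$ is bijective (from the fiberwise isomorphisms), then cites the spreading-out result \cite[Proposition~4.6.7(i)]{EGAIII} for proper morphisms over a locally Noetherian base to conclude that every $s\in S$ has a neighborhood $U$ over which $h$ is a closed embedding, and finishes exactly as you do: a surjective closed immersion onto a reduced scheme is an isomorphism. What you have done, in effect, is reprove the relevant special case of that EGA result by hand: cancellation gives properness of $h$; the identification $h^{-1}(z)=h_s^{-1}(z)$ gives quasi-finiteness (strictly, you check fibers over closed points only, but upper semicontinuity of fiber dimension plus properness, or the Jacobson property, upgrades this to all fibers); proper plus quasi-finite gives finiteness; and then the affine base-change identity $\iota_s^{\ast}(h_{\ast}\mathcal{O}_X)=(h_s)_{\ast}\mathcal{O}_{X_s}$ together with right-exactness of $\iota_s^{\ast}$ and a Nakayama-type support argument yields surjectivity of $\mathcal{O}_Z\to h_{\ast}\mathcal{O}_X$, i.e.\ that $h$ is a closed immersion. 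Your version is more self-contained, relying only on standard facts (cancellation, properness plus quasi-finiteness implies finiteness, compatibility of relative $\Spec$ with base change, Nakayama), whereas the paper buys brevity at the price of invoking a nontrivial theorem from EGA~III. Two cosmetic points: the fibers $Z_s$ cover the closed points of $Z$ simply because $g$ carries closed points to closed points (finite type over $\mathbb{C}$), not because $g$ is surjective; and your "equivalently" between $\iota_s^{\ast}\mathcal{Q}=0$ and $\Supp\mathcal{Q}\cap Z_s=\emptyset$ silently uses Nakayama, which is worth making explicit since it is exactly where coherence of $\mathcal{Q}$ enters.
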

\begin{proof}
The map $h$ is bijective. By 
\cite[Proposition~4.6.7(i)]{EGAIII}  every   $s\in   S$   has   an   open 
neighborhood $U$ in $S$ such  that  $h|_{f^{-1}(U)}:f^{-1}(U)\to  g^{-1}(U)$  is  a 
closed embedding. Since $X$ and $Z$ are reduced schemes 
$h|_{f^{-1}(U)}$ is an isomorphism. Therefore $h$ is an isomorphism.
\end{proof}
In the next theorem we assume the setup of \S\ref{2.0}.
\begin{theorem}\label{3.25}
The algebraic variety $H^{\Lambda,G}_{n,\lambda_0}(Y,y_0)$ is a fine 
moduli variety for the moduli functor 
$\mathcal{H}^{\Lambda,G}_{(Y,y_0),n}$ of smooth, proper families of 
pointed $(\Lambda,G)$-covers of $(Y,y_0)$ branched in $n$ points 
(cf. \S\ref{2.22a}). The universal family is (cf. Theorem~\ref{3.19})
\begin{equation}\label{e3.25}
(\phi:\mathcal{X}(y_0,\lambda_0)\to 
Y\times H^{\Lambda,G}_{n,\lambda_0}(Y,y_0),\:
\xi:H^{\Lambda,G}_{n,\lambda_0}(Y,y_0)\to \mathcal{X}(y_0,\lambda_0))
\end{equation}
\end{theorem}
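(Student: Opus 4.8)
The plan is to prove that pullback of the family \eqref{e3.25} sets up a bijection $\Phi_S\colon \operatorname{Mor}(S,H^{\Lambda,G}_{n,\lambda_0}(Y,y_0))\to \mathcal{H}^{\Lambda,G}_{(Y,y_0),n}(S)$, $u\mapsto [u^{\ast}\phi,u^{\ast}\xi]$, natural in $S$. That $\Phi_S$ is well defined and natural follows from \S\ref{2.22a}: the pullback of a smooth, proper family of pointed $(\Lambda,G)$-covers is again such a family, and this construction is compatible with composition of base changes. The injectivity of $\Phi_S$ is the easier half. Suppose $u_1,u_2\colon S\to H^{\Lambda,G}_{n,\lambda_0}(Y,y_0)$ yield equivalent pullback families. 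For every closed point $s\in S$ the fiber of $u_i^{\ast}\phi$ over $s$ is the fiber of the universal family over $u_i(s)$, which by Theorem~\ref{3.19} has monodromy invariant $u_i(s)$. Equivalent families have fiberwise equivalent pointed covers, so by Proposition~\ref{2.7} their monodromy invariants coincide, whence $u_1(s)=u_2(s)$ for all closed points; since $S$ is reduced and $H^{\Lambda,G}_{n,\lambda_0}(Y,y_0)$ is separated, $u_1=u_2$.

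For surjectivity, let $(f\colon X\to Y\times S,\eta)$ be a family of the given type with branch locus $B$, and let $u\colon S\to H^{\Lambda,G}_{n,\lambda_0}(Y,y_0)$ be its classifying morphism from Proposition~\ref{3.23}, so that $u(s)$ is the monodromy invariant of $(f_s,\eta(s))$. I must produce an equivalence $u^{\ast}(\text{family }\eqref{e3.25})\simeq(f,\eta)$. First I would work over the unramified locus. Writing $X'=f^{-1}(Y\times S\setminus B)$, for each $s$ Proposition~\ref{2.3} identifies $f_s^{-1}(y)$ with the right cosets of $\Gamma_{m_s,\lambda_0}=m_s^{-1}(G(\lambda_0))$ by path following from $\eta(s)=\varepsilon_s(\lambda_0)$, where $(\varepsilon_s,m_s)$ is the normalized pair supplied by Proposition~\ref{3.23}. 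This is exactly the coset model of \S\ref{3.13}--\S\ref{3.18} of the unramified part $\mathcal{X}(y_0,\lambda_0)'$ of the universal total space. I would therefore define $h'\colon X'\to u^{\ast}\mathcal{X}(y_0,\lambda_0)'$ by sending $x\in f_s^{-1}(y)$ to $(\Gamma_{m_s,\lambda_0}[\alpha]_{B_s},B_s,m_s^{N(\lambda_0)})$, where $\alpha$ is a path from $y_0$ to $y$ in $Y\setminus B_s$ whose lift starting at $\eta(s)$ terminates at $x$. This is independent of the choice of $\alpha$ within its coset, carries $\eta$ to $u^{\ast}\xi$, and is fiberwise bijective. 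Continuity of $h'$, and hence, by \cite[Corollary~4.5]{K7}, that it is an isomorphism of \'etale covers, follows by matching the explicit neighborhood bases, the open sets $\overline{N}_{(\alpha,D,m)}$ of \eqref{e3.16} for $X'$ against those of \eqref{e3.18a} for the universal family, exactly as in the continuity argument of Proposition~\ref{3.23}.

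The hard part will be extending $h'$ across the ramification locus $f^{-1}(B)$, which has codimension one in $X$, so that neither normality nor a Hartogs-type argument applies. Here I would invoke the extension criterion of \cite{K2}: both $X$ and $u^{\ast}\mathcal{X}(y_0,\lambda_0)$ are smooth over $S$ and, near their ramification points, both carry the local analytic normal form $z^{k}=t-t_j$ in coordinates $(z,t_1,\ldots,t_n)$ described in Proposition~\ref{3.8} and Remark~\ref{3.20a}; the criterion then guarantees that $h'$ extends to a morphism $h\colon X\to u^{\ast}\mathcal{X}(y_0,\lambda_0)$ over $Y\times S$ carrying $\eta$ to $u^{\ast}\xi$. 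To conclude, I would verify that for every $s$ the induced map $h_s$ on scheme-theoretic fibers is an isomorphism: away from ramification $h_s$ is the covering isomorphism of pointed covers with common monodromy invariant $u(s)$ furnished by Proposition~\ref{2.3}, and it extends biholomorphically across the ramification points by the matching normal forms. Lemma~\ref{3.43} then promotes the fiberwise isomorphism $h$ to a global isomorphism, establishing $u^{\ast}(\text{family }\eqref{e3.25})\simeq(f,\eta)$ and hence the surjectivity of $\Phi_S$. Together with injectivity and naturality, this exhibits $\Phi$ as an isomorphism of functors, so $H^{\Lambda,G}_{n,\lambda_0}(Y,y_0)$ is a fine moduli variety with universal family \eqref{e3.25}.
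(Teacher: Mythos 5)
Your proposal follows the paper's own proof almost step for step: the classifying morphism $u$ from Proposition~\ref{3.23}, the coset-model definition of $h'$ on the unramified locus (this is exactly the paper's formula \eqref{e3.27}), continuity checked against the neighborhood bases \eqref{e3.16} and \eqref{e3.18a}, algebraicity of $h'$ via \cite[Corollary~4.5]{K7}, extension across $f^{-1}(B)$ via the criterion of \cite{K2}, fiberwise identification with the unique covering isomorphisms furnished by Theorem~\ref{3.19}, and finally Lemma~\ref{3.43} to promote the fiberwise isomorphisms to a global one. Your injectivity argument (monodromy invariants of the fibers determine $u$ on closed points, hence determine $u$) is also the paper's uniqueness argument, merely spelled out.

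The one genuine gap is in how you justify the key extension step. You claim that \cite{K2} applies because both total spaces are smooth over $S$ and carry the matching local analytic normal form $z^{k}=t-t_{j}$ near ramification points. Matching normal forms is not what the criterion requires, and the paper makes no use of Proposition~\ref{3.8} or Remark~\ref{3.20a} at this step. What must be verified, and what the paper verifies, is a finiteness condition on the closure of the graph of $\psi=h'$ viewed as an $S$-morphism $X'\to \mathcal{X}(y_0,\lambda_0)_{S}$: since $h'$ is a map over $Y\times S$ and $\phi_{S}$ is finite, the graph $\Gamma$ lies in the set-theoretic fiber product $X\times_{Y\times S}\mathcal{X}(y_0,\lambda_0)_{S}$, which is Zariski closed in $X\times \mathcal{X}(y_0,\lambda_0)_{S}$; hence $\overline{\Gamma}$ lies there too, and the projection $\overline{\Gamma}\to X$ has finite fibers. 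That is the hypothesis which makes \cite[Theorem~2]{K2} applicable. The point is not cosmetic: a fibration criterion is needed precisely because $S$, and therefore $X$, need not be normal (only the fibers are smooth), so one cannot argue via normality of $X$ and finite birational maps, nor does a local analytic matching of charts by itself produce an algebraic extension over all of $S$. Once the extension $\tilde{\psi}$ exists, your concluding steps (fiberwise agreement with $h_{s}$ on a dense open set, hence fiberwise isomorphism, then Lemma~\ref{3.43}) are exactly the paper's and are correct.
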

\begin{proof}
Let $[f:X\to Y\times S,\eta :S\to X]\in
\mathcal{H}^{\Lambda,G}_{(Y,y_0),n}(S)$ and let 
$B\subset Y\times S$ be the branch locus of $f$.
For every fiber ($f_s:X_s\to Y,\eta(s))$ let $(\varepsilon_s,m_s)$ be a 
pair which satisfies the conditions of Proposition~\ref{2.3}(i) 
(cf. Proposition~\ref{2.15b}).
Let $u:S\to H^{\Lambda,G}_{n,\lambda_0}(Y,y_0)$, 
$u(s)=(B_{s},m_{s}^{N(\lambda_0)})$ be the morphism of 
Proposition~\ref{3.23}. We want to prove that 
$(f:X\to Y\times S,\eta)$ is equivalent to the pullback by $u$ of the 
family \eqref{e3.25}. This is the unique morphism with this 
property since the monodromy invariant classifies the 
pointed $(\Lambda,G)$-covers of $(Y,y_0)$ up to equivalence.
For every $s\in S$ there exists a unique isomorphism 
$h_{s}:X_{s}\to \mathcal{X}(y_0,\lambda_0)_{u(s)}$ such that 
$\phi_{u(s)}\circ h_{s} = (id_{Y}\times u)\circ f_{s}$ and 
$h_{s}(\eta(s))=\xi(u(s))$. Let $h:X\to \mathcal{X}(y_0,\lambda_0)$  be  the 
map whose restriction on every  $X_{s}$  equals  $h_{s}$.  One  obtains  the 
following commutative diagram of maps:
\begin{equation}\label{e3.26}
\xymatrix{
X\ar[r]^-{h}\ar[d]_-{f}&\mathcal{X}(y_0,\lambda_0)\ar[d]^-{\phi}\\
Y\times S\ar[r]^-{id\times u}&Y\times H^{\Lambda,G}_{n,\lambda_0}(Y,y_0).
}
\end{equation}
We want to prove that $h$ is a morphism and that \eqref{e3.26} is a Cartesian 
diagram. Let 
$\mathcal{B}  \subset  Y\times  H^{\Lambda,G}_{n,\lambda_0}(Y,y_0)$  be  the 
branch locus of $\phi$. One has $B=(id_{Y}\times u)^{-1}(\mathcal{B})$. Let 
$X'=X\setminus f^{-1}(B)$, 
$\mathcal{X}(y_0,\lambda_0)'=\mathcal{X}(y_0,\lambda_0)\setminus 
\phi^{-1}(\mathcal{B})$, $h'=h|_{X'}$, $f'=f|_{X'}$, 
$\phi'=\phi|_{\mathcal{X}(y_0,\lambda_0)'}$. Restricting \eqref{e3.26} to 
the complements of the branch loci one obtains the  
commutative  diagram  of maps
\begin{equation*}
\xymatrix{
X'\ar[r]^-{h'}\ar[d]_-{f'}&\mathcal{X}(y_0,\lambda_0)'\ar[d]^-{\phi'}\\
Y\times S\setminus B\ar[r]^-{(id\times u)'}&
Y\times H^{\Lambda,G}_{n,\lambda_0}(Y,y_0)\setminus \mathcal{B}.
}
\end{equation*}
We claim that $h'$ is continuous with respect to the topologies of 
${X'}^{an}$ and ${\mathcal{X}(y_0,\lambda_0)'}^{an}$. Let $s\in S$ and 
let $\varepsilon_{s} :\Lambda \to f_{s}^{-1}(y_0)$,
$m_{s}:\pi_1(Y\setminus B_{s},y_0)\to G$ be a pair as in 
Proposition~\ref{2.3}(i) with $\varepsilon_{s}(\lambda_0)=\eta(s)$. One has 
\[
(f'_{s})_{\ast}\pi_1(X'_{s},\eta(s)) = m_{s}^{-1}(G(\lambda_0)) =
\Gamma_{m_{s},\lambda_{0}}.
\]
Let $x\in X'_{s}$ and let $\gamma :I\to X'_{s}$ be a path such that 
$\gamma(0)=\eta(s)$, $\gamma(1)=x$. Then 
$f'\circ \gamma :I\to Y\setminus B_{s}$ is a path with initial point  $y_0$. 
Lifting $f'\circ \gamma$ in $\mathcal{X}(y_0,\lambda_0)'_{u(s)}$ with 
initial point $\xi(u(s))=
(\Gamma_{m_{s},\lambda_0}[c_{y_{0}}]_{B_{s}},B_{s},m_{s}^{N(\lambda_0)})$ 
its terminal point is 
\begin{equation}\label{e3.27}
h'(x) = h'_{s}(x) = 
(\Gamma_{m_{s},\lambda_0}[f'\circ 
\gamma]_{B_{s}},B_{s},m_{s}^{N(\lambda_0)}).
\end{equation}
For every $x_{0}\in X'$ and every neighborhood $N$ of $h'(x_{0})$ in 
${\mathcal{X}(y_0,\lambda_0)'}^{an}$  we  have  to  prove  that  $x_{0}$  is 
internal point of ${h'}^{-1}(N)$. Let $f'(x_{0})=(y,s_{0})$, 
$D = B_{s_{0}} = \{b_1,\ldots,b_n\}$, 
$m=m_{s_{0}}:\pi_1(Y\setminus D,y_0)\to G$. Let $\gamma_{0}:I\to X'_{s_{0}}$ 
be a path such that $\gamma_{0}(0)=\eta(s_{0})$, $\gamma_{0}(1)=x_{0}$ and 
let $\alpha = f'_{s_{0}}\circ \gamma_{0} :I\to Y\setminus D$. One has 
$\alpha(0)=y_{0}$, $\alpha(1)=y$ and 
$h'(x_{0})=(\Gamma_{m,\lambda_0}[\alpha]_D,D,m^{N(\lambda_0)})$. Let 
$N_{(\alpha,D,m^{N(\lambda_0)})}(U,U_1,\ldots,U_n)$,
with $y\in U$, $b_i\in U_i$, $i=1,\ldots,n$, and
$\alpha(I)\subset Y\setminus \cup_{i=1}^n\overline{U}_i$, be  a  neighborhood 
of $h'(s_{0})$ as in \eqref{e3.18a} contained in $N$. We want to show that 
there exists a neighborhood $W$ of $x_{0}$ such that 
$h'(W)\subset    N_{(\alpha,D,m^{N(\lambda_0)})}(U,U_1,\ldots,U_n)$.     The 
complex space $S^{an}$ is locally connected \cite[Ch.9 \S3 n.1]{GR}, so one 
may shrink the neighborhood $U$ of $y$ and choose a connected neighborhood $V$ 
of $s_{0}$ such 
that $\beta(V)\subset N_D(U_1,\ldots,U_n)$, $U\times V\subset
Y\times S\setminus B$ and ${f'}^{-1}(U\times V)$ is a disjoint union of 
connected open sets homeomorphic to $U\times V$. Let $W$ be the connected 
component of ${f'}^{-1}(U\times V)$ which contains $x_{0}$. We claim that 
$h'(W)\subset  N_{(\alpha,D,m^{N(\lambda_0)})}(U,U_1,\ldots,U_n)$.  Consider 
the homotopy 
\[
F: [0,1]\times V \to Y\times S \setminus B, \quad F(t,s) = 
(\alpha(t),s).
\]
By the covering homotopy property of topological covering maps 
(cf. \cite[Ch.2 \S3 Th.3]{Spa}) there is a unique continuous lifting of $F$
\[
\xymatrix{
              &X'\ar[d]^-{f'}\\
[0,1]\times V\ar[ru]^-{\tilde{F}}\ar[r]^-{F}&  Y\times S\setminus B
}
\]
such that $\tilde{F}(0,s)=\eta(s)$ for $\forall s\in V$. We have 
$f'\circ \tilde{F}(0,s)=f'(\eta(s))=(y_{0},s) = (\alpha(0),s)$. Let 
$s\in V$. The path $t\mapsto \tilde{F}(t,s)$ is a lifting of 
$\alpha \times \{s\}$ with initial point $\eta(s)$, so 
$f'(\tilde{F}(1,s)) = (\alpha(1),s) = (y,s) \in U\times V$. If $s=s_{0}$, 
then $\tilde{F}(1,s_{0}) = \gamma_{0}(1) = x_{0}$. This implies that 
$\tilde{F}(\{1\}\times V)\subset W$, since $V$ is connected. The map 
$f'|_{W}:W\to U\times V$ is a homeomorphism. Let $x\in W$ and let 
$f'(x) = (z,s)$. We construct a path $\gamma$ in $X'_{s}$ which connects 
$\eta(s)$ with $x$ as follows. The path 
$\tilde{\alpha}_{s}(t)=\tilde{F}(t,s)$, $t\in [0,1]$ has initial point 
$\eta(s)$ and terminal point $\tilde{F}(1,s)=w\in W$ such that 
$f'(w) = (\alpha(1),s) = (y,s)$. Let $\tau:I\to U$ be a path such that 
$\tau(0) = y, \tau(1) = z$. Then 
$\gamma = \tilde{\alpha}_{s}\cdot ((f'|_{W})^{-1}\circ (\tau \times \{s\}))$ 
is a path in $X'_{s}$ which connects $\eta(s)$ with $x$ and 
$f'_{s}\circ \gamma = \alpha \cdot \tau$. Let $\beta(s) = E$. We showed in 
Proposition~\ref{3.23} that 
$m_{s}^{N(\lambda_0)} = m(E)^{N(\lambda_0)}$, so by \eqref{e3.27}
\[
h'(x)=(\Gamma_{m(E),\lambda_{0}}[\alpha \cdot \tau]_E,E,m(E)^{N(\lambda_0)})
\]
This shows that 
$h'(W)\subset N_{(\alpha,D,m^{N(\lambda_0)})}(U,U_1,\ldots,U_n)$, so $x_{0}$ 
is an internal point of ${h'}^{-1}(N)$. The claim that $h'$ is continuous is 
proved.
\par
One applies \cite[Corollary~4.5]{K7} to the commutative diagram
\[
\xymatrix{
 &\mathcal{X}(y_0,\lambda_0)'\ar[d]^-{\phi'}\\
X'\ar[ru]^-{h'}\ar[r]_-{(id_Y\times u)'\circ f'}&  Y\times 
H^{\Lambda,G}_{n,\lambda_0}(Y,y_0)\setminus \mathcal{B}
}
\]
and concludes that $h':X'\to \mathcal{X}(y_0,\lambda_0)'$ is a morphism.
\par
Let $\phi_{S}:\mathcal{X}(y_0,\lambda_0)_{S}\to Y\times S$ be  the  pullback 
of $\phi:\mathcal{X}(y_0,\lambda_0)\to 
Y\times H^{\Lambda,G}_{n,\lambda_0}(Y,y_0)$ by 
$u:S\to H^{\Lambda,G}_{n,\lambda_0}(Y,y_0)$. The composition 
$X'\overset{h'}{\lto}\mathcal{X}(y_0,\lambda_0)'
\eto \mathcal{X}(y_0,\lambda_0)$ yields an $S$-morphism 
$\psi: X'\to \mathcal{X}(y_0,\lambda_0)_{S}$ which fits in the following 
commutative diagram of morphisms:
\[
\xymatrix{
X\ar[dr]_-{f} 
&X'\ar@{_{(}->}[l]\ar[r]^-{\psi}\ar[d]&
\mathcal{X}(y_0,\lambda_0)_{S}\ar[dl]^-{\phi_S}\\
&Y\times S\ar[d]^-{\pi_2}\\
&S
}
\]
The graph $\Gamma$ of $\psi$ is contained in the set-theoretical fiber 
product $X\times_{Y\times S}\mathcal{X}(y_0,\lambda_0)_{S}$, which is 
a Zariski closed subset of $X\times \mathcal{X}(y_0,\lambda_0)_{S}$, so 
it contains  the  closure  $\overline{\Gamma}$.  Therefore  the  projection 
morphism $\overline{\Gamma}\to X$ has finite fibers. Applying 
\cite[Theorem~2]{K2} one concludes that $\psi$ can be extended to an 
$S$-morphism $\tilde{\psi}:X\to \mathcal{X}(y_0,\lambda_0)_{S}$. For 
every $s\in S$ the composition $X_{s}\overset{\tilde{\psi}_{s}}{\lto}
(\mathcal{X}(y_0,\lambda_0)_{S})_{s}\overset{\sim}{\lto}
\mathcal{X}(y_0,\lambda_0)_{u(s)}$ is a morphism whose restriction on 
$X'_{s}$ coincides with $h'_{s}$. Hence this composition equals $h_{s}$. 
This implies that $h$ equals the composition of morphisms 
$X\overset{\tilde{\psi}}{\lto}\mathcal{X}(y_0,\lambda_0)_{S}
\lto \mathcal{X}(y_0,\lambda_0)$, so $h:X\to \mathcal{X}(y_0,\lambda_0)$  is 
a morphism. Furthermore $\tilde{\psi}_{s}$ is an isomorphism for every 
$s\in S$. Applying Lemma~\ref{3.43} to the smooth, proper morphisms
$X\to S$ and $\mathcal{X}(y_0,\lambda_0)_{S} \to S$
one concludes that 
$\tilde{\psi}:X\to \mathcal{X}(y_0,\lambda_0)_{S}$ is  an  isomorphism, 
so Diagram~\eqref{e3.26} is Cartesian.  One has that 
$\phi_{S}\circ \tilde{\psi} = f$ and 
$\tilde{\psi}(\eta(s)) = \psi(\eta(s)) = \xi_{S}(s)$ for $\forall  s\in  S$, 
since $h(\eta(s))=\xi(u(s))$ by the construction of the map $h$. Therefore 
$(f:X\to Y\times S,\eta)$ is equivalent to the pullback of the family 
\eqref{e3.25} by the morphism $u:S\to H^{\Lambda,G}_{n,\lambda_0}(Y,y_0)$.
\end{proof}
A pointed cover $(f:X\to Y,x_0)$ of $(Y,y_0)$ is  a  $(\Lambda,G)$-cover  if 
and only if it is equivalent to 
$(\Lambda \times^{G}C \to Y,\pi(\lambda_0,z_{0}))$ for some pointed 
$G$-cover $(p:C\to Y,z_0)$ of $(Y,y_0)$ (cf. Corollary~\ref{2.8a}). 
In the next proposition, applying Theorem~\ref{3.25}, we extend this to smooth, 
proper families of 
pointed covers of $(Y,y_0)$ (cf. also Proposition~\ref{3.48}). 
\begin{proposition}\label{3.49}
Let $y_{0}\in Y$. Let $(f:X\to Y\times S,\eta:S\to X)$ be a smooth, proper family of 
pointed covers of $(Y,y_0)$ branched in $n$ points. Suppose that 
$S$ is connected and there is a point $s_0\in S$ such that 
$(f_{s_{0}}:X_{s_{0}}\to Y,\eta(s_{0}))$ is a $(\Lambda,G)$-cover 
of $(Y,y_0)$. There is an \'{e}tale Galois cover $\mu:T\to S$ with 
Galois group isomorphic to $N(\lambda_0)$ and a smooth, proper family 
$(p_{T}:\mathcal{C}\to Y\times T,\zeta_{T}:T\to \mathcal{C})$ of 
pointed $G$-covers of $(Y,y_0)$ branched in $n$ points such that the 
pullback by $\mu$, $(f_{T}:X_{T}\to Y\times T,\eta_{T}:T\to X_T)$ 
is equivalent to 
$(\Lambda \times^G \mathcal{C}\to Y\times T, 
\pi(\lambda_0,\zeta_{T}):T\to \Lambda \times^G \mathcal{C})$ 
(cf. Proposition~\ref{3.48}). For every $s\in S$ the fibers over the points of 
$\mu^{-1}(s)$ are the $|N(\lambda_0)|$ pointed $G$-covers of $(Y,y_0)$,
nonequivalent to each other, whose 
quotients by $G(\lambda_0)$ are equivalent to the pointed cover $(X_s\to Y,\eta(s))$ of $(Y,y_0)$
(cf. Corollary~\ref{2.8a}).
\end{proposition}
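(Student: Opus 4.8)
The plan is to produce $T$ as the pullback, along the classifying morphism of Theorem~\ref{3.25}, of the étale Galois cover $\nu\colon H^G_n(Y,y_0)\to H^{\Lambda,G}_{n,\lambda_0}(Y,y_0)$, and then to pull back the universal family of pointed $G$-covers. First, since $S$ is connected and $(f_{s_0},\eta(s_0))$ is a $(\Lambda,G)$-cover, Proposition~\ref{2.20} shows that $(f\colon X\to Y\times S,\eta)$ is a smooth, proper family of pointed $(\Lambda,G)$-covers; by Theorem~\ref{3.25} there is a unique morphism $u\colon S\to H^{\Lambda,G}_{n,\lambda_0}(Y,y_0)$ for which $(f,\eta)$ is equivalent to the pullback by $u$ of the universal family \eqref{e3.25}. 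By Proposition~\ref{3.12a}, $\nu$ is the quotient morphism of the free $N(\lambda_0)$-action on $H^G_n(Y,y_0)$, hence an étale cover which we regard as an $N(\lambda_0)$-torsor. I would set $T=S\times_{H^{\Lambda,G}_{n,\lambda_0}(Y,y_0)}H^G_n(Y,y_0)$, with projections $\mu\colon T\to S$ and $v\colon T\to H^G_n(Y,y_0)$, so that $u\circ\mu=\nu\circ v$. Being a base change of the étale cover $\nu$, the morphism $\mu$ is étale (whence $T$ is reduced), and the $N(\lambda_0)$-action on the second factor is free with $T/N(\lambda_0)=S$; thus $\mu$ is an étale Galois cover with group $N(\lambda_0)$. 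Finally, letting $(p\colon\mathcal{C}(y_0)\to Y\times H^G_n(Y,y_0),\zeta)$ be the universal family of pointed $G$-covers of \eqref{e3.7}, I define $(p_T\colon\mathcal{C}\to Y\times T,\zeta_T)$ as its pullback by $v$, a smooth, proper family of pointed $G$-covers of $(Y,y_0)$ branched in $n$ points.

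The heart of the argument is to compute the pullback of \eqref{e3.25} by $u\circ\mu=\nu\circ v$ in two ways. On one hand, $(f_T,\eta_T)$ is the pullback of $(f,\eta)$ by $\mu$, hence, by functoriality of pullbacks and the equivalence furnished by Theorem~\ref{3.25}, it is equivalent to the pullback of \eqref{e3.25} by $u\circ\mu$. On the other hand, I would pull back first by $\nu$: the square \eqref{e3.18} is Cartesian, because both $\kappa$ and the pullback of $id\times\nu$ along $\phi$ are $N(\lambda_0)$-torsors over $\mathcal{X}(y_0,\lambda_0)$, and the canonical comparison morphism induced by the $N(\lambda_0)$-equivariance of $\varphi$ is a morphism of torsors over the same base, hence an isomorphism; the identity $\kappa\circ\eta=\xi\circ\nu$ from \S\ref{3.18} matches the sections. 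Therefore the pullback of \eqref{e3.25} by $\nu$ is the family $(\varphi\colon\mathcal{X}(y_0)\to Y\times H^G_n(Y,y_0),\eta)$ of Proposition~\ref{3.8}, which by \eqref{e3.8} is exactly $(\Lambda\times^G\mathcal{C}(y_0)\to Y\times H^G_n(Y,y_0),\pi(\lambda_0,\zeta))$. Pulling this back further by $v$, and using that the formation of $\Lambda\times^G(-)$ commutes with base change (cf. the proof of Proposition~\ref{3.45}), I obtain $(\Lambda\times^G\mathcal{C}\to Y\times T,\pi(\lambda_0,\zeta_T))$. Comparing the two descriptions gives the asserted equivalence $(f_T,\eta_T)\simeq(\Lambda\times^G\mathcal{C}\to Y\times T,\pi(\lambda_0,\zeta_T))$.

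For the final statement I would argue fiberwise. Fix $s\in S$ and write $u(s)=(D,m^{N(\lambda_0)})$. Then $v$ maps $\mu^{-1}(s)$ bijectively onto $\nu^{-1}(u(s))=\{(D,\sigma m\sigma^{-1})\mid\sigma\in N(\lambda_0)\}$, a set of $|N(\lambda_0)|$ distinct points because $N(\lambda_0)$ acts freely (Proposition~\ref{2.3}). For $t\in\mu^{-1}(s)$ the fiber of $(p_T,\zeta_T)$ over $t$ is the pointed $G$-cover with monodromy invariant $v(t)$, so these $G$-covers are pairwise non-$G$-equivalent (cf. \S\ref{2.5b}) and are precisely those described in Corollary~\ref{2.8a}. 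By Propositions~\ref{3.45} and~\ref{3.48} the family $\Lambda\times^G\mathcal{C}$ is $Y\times T$-isomorphic to $\mathcal{C}/G(\lambda_0)$ compatibly with the sections, so over $t$ the quotient $\mathcal{C}_t/G(\lambda_0)$ is the fiber $((f_T)_t,\eta_T(t))$, which equals $(f_s\colon X_s\to Y,\eta(s))$ up to equivalence since $\mu(t)=s$; hence these are exactly the $|N(\lambda_0)|$ pointed $G$-covers whose quotient by $G(\lambda_0)$ is equivalent to $(X_s\to Y,\eta(s))$.

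I expect the main obstacle to be precisely the second paragraph: verifying that \eqref{e3.18} is Cartesian and keeping accurate track of the sections, so that pulling the universal $(\Lambda,G)$-family back along $\nu$ returns the $\Lambda\times^G(-)$ construction applied to the universal $G$-family, together with the commutation of $\Lambda\times^G(-)$ with base change. Once this compatibility is established, the equivalence follows formally from the uniqueness clause of Theorem~\ref{3.25}.
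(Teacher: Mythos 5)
Your proposal is correct and follows essentially the same route as the paper's own proof: the paper likewise defines $T$ by the Cartesian square $T=S\times_{H^{\Lambda,G}_{n,\lambda_0}(Y,y_0)}H^G_n(Y,y_0)$, pulls back the universal family of pointed $G$-covers \eqref{e3.7} along the induced morphism $\tilde{u}=v\colon T\to H^G_n(Y,y_0)$, and reduces the asserted equivalence to exactly the two points you isolate, namely the Cartesianness of diagram \eqref{e3.18} and the commutation of quotient formation with base change (\cite{KM}, Prop.~A.7.1.3), keeping track of the sections via $\kappa\circ\eta=\xi\circ\nu$. The only differences are cosmetic: the paper verifies that \eqref{e3.18} is Cartesian by noting that the canonical map to the fiber product is a bijective morphism of smooth varieties (your torsor argument is a valid substitute), and it organizes the comparison through the relative quotient morphism $\rho_{1}\colon\mathcal{C}\to X_{1}$ and Proposition~\ref{3.48} rather than by commuting $\Lambda\times^{G}(-)$ with base change directly.
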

\begin{proof}
By Proposition~\ref{2.20} $(f:X\to Y\times S,\eta)$ is a smooth, proper family of 
pointed $(\Lambda,G)$-covers of $(Y,y_0)$. Let 
$u:S\to H^{\Lambda,G}_{n,\lambda_0}(Y,y_0)$ and 
$h:X\to \mathcal{X}(y_0,\lambda_0)$ be the morphisms defined in the proof 
of Theorem~\ref{3.25}, which make Diagram~\eqref{e3.26} Cartesian. The 
quotient morphism $\nu: H^G_n(Y,y_0)\to H^{\Lambda,G}_{n,\lambda_0}(Y,y_0)$ 
 is an \'{e}tale Galois cover with Galois group 
$N(\lambda_0)$. Let $T$ be the fiber product, which fits  in  the  Cartesian 
diagram 
\begin{equation}\label{e3.50a}
\xymatrix{
T\ar[r]^-{\tilde{u}}\ar[d]_-{\mu}&H^G_n(Y,y_0)\ar[d]^-{\nu}\\
S\ar[r]^-{u}&H^{\Lambda,G}_{n,\lambda_0}(Y,y_0).
}
\end{equation}
The morphism $\mu:T\to S$ is an \'{e}tale Galois cover with Galois group 
$N(\lambda_0)$ (cf.\cite[Prop. A.7.1.3]{KM}).
Let $(f_{T}:X_{T}\to Y\times T,\eta_{T})$ be the pullback of 
$(f:X\to Y\times S,\eta)$ by $\mu:T \to S$ (cf. \S\ref{2.22a}). One has the 
following commutative diagram of morphisms in which the two squares are 
Cartesian:
\begin{equation}\label{e3.50}
\xymatrix{
X_T\ar[r]^-{h_{\mu}}\ar[d]_-{f_{T}}&X\ar[d]_-{f}
\ar[r]^-{h}&\mathcal{X}(y_0,\lambda_0)\ar[d]^-{\phi}\\
Y\times T\ar[r]^-{id\times \mu}&Y\times S\ar[r]^-{id\times u}&
Y\times H^{\Lambda,G}_{n,\lambda_0}(Y,y_0).
}
\end{equation}
Therefore the composed diagram is Cartesian as well 
\cite[Proposition~4.16]{GW}.
Let \linebreak
$(\varphi:\mathcal{X}(y_0)\to Y\times H^G_n(Y,y_0), 
\theta: H^G_n(Y,y_0)\to \mathcal{X}(y_0))$ be
the family of pointed $(\Lambda,G)$-covers of $(Y,y_0)$ defined in 
Proposition~\ref{3.8}. Let $(f_{1}:X_{1}\to Y\times T,\eta_{1})$ 
be its pullback by $\tilde{u}:T\to H^G_n(Y,y_0)$. One has the following 
commutative diagram of morphisms in which the left square is  Cartesian  and 
the right one is \eqref{e3.18}:
\begin{equation}\label{e3.51}
\xymatrix{
X_1\ar[r]^-{h_1}\ar[d]_-{f_1}&\mathcal{X}(y_0)\ar[d]^-{\varphi}
\ar[r]^-{\kappa}&\mathcal{X}(y_0,\lambda_0)\ar[d]^-{\phi}\\
Y\times T\ar[r]^-{id\times \tilde{u}}&Y\times H^G_n(Y,y_0)\ar[r]^-{id\times \nu}&
Y\times H^{\Lambda,G}_{n,\lambda_0}(Y,y_0).
}
\end{equation}
The right square is also Cartesian since the canonical morphism of 
$\mathcal{X}(y_0)$ into the fiber product is a bijective morphism of  smooth 
algebraic varieties. Therefore the composed diagram is Cartesian. We have 
$u\circ \mu = \nu \circ \tilde{u}$ by \eqref{e3.50a}. Comparing 
\eqref{e3.50} with \eqref{e3.51} we conclude that there is an isomorphism 
$q:X_{1}\to X_T$ such that $f_T\circ q = f_{1}$. We claim that 
$q\circ \eta_{1}(t) = \eta_{T}(t)$ for $\forall t\in T$. It suffices to 
check that $\kappa \circ h_{1} \circ \eta_{1}(t) = 
h\circ h_{\mu}\circ \eta_{T}(t)$. One has 
$\kappa(h_{1}(\eta_{1}(t))) = \kappa(\theta(\tilde{u}(t)))
= \xi(\nu(\tilde{u}(t)))$ and $h(h_{\mu}(\eta_{T}(t)))=
h(\eta(\mu(t)))=\xi(u(\mu(t)))=\xi(\nu(\tilde{u}(t)))$. This 
shows that $q:X_{1}\to X$
defines an equivalence of the families of 
pointed covers of $(Y,y_0)$, \linebreak
$(f_{1}:X_{1}\to Y\times T,\eta_{1})$ and 
$(f_{T}:X_{T}\to Y\times T,\eta_{T})$.
\par
Let $(p_{T}:\mathcal{C}\to Y\times T,\zeta_{T})$ be the pullback 
of $(p:\mathcal{C}(y_0)\to Y\times H^G_n(Y,y_0),\zeta)$ by 
$\tilde{u}:T\to H^G_n(Y,y_0)$ \cite[\S5.2]{K7}. One has that 
$\mathcal{C}$ is reduced and may be identified with the closed subvariety  
$\mathcal{C} = \{(z,t)|\pi_{2}\circ p(z)=\tilde{u}(t)\}$ of $\mathcal{C}(y_0)\times T$. 
The induced action  
of $G$ is defined by $g(z,t)=(gz,t)$ and $\zeta_{T}(t)=
(\zeta(\tilde{u}(t)),t)$ for $\forall t\in T$. Let 
$j:\mathcal{C} \to \mathcal{C}(y_0)$ be the $G$-equivariant morphism 
$j(z,t)=z$. Let $H=G(\lambda_0)$ and let 
$\rho:\mathcal{C}(y_0)\to \mathcal{X}(y_0)$ be the $H$-invariant 
morphism defined by $\rho(z)=\pi(\lambda_0,z)$. Let 
$\rho_{1}:\mathcal{C}\to X_{1}$ be the morphism defined by 
$\rho_{1}(z,t)=(\rho(z),t)$. It is $H$-invariant and fits in the 
following commutative diagram of morphisms:
\begin{equation*}
\xymatrix{
\mathcal{C}\ar[r]^-{\rho_{1}}\ar[d]_-{j}&X_1\ar[d]^-{h_1}\ar[r]^-{f_1}&
Y\times T\ar[d]^-{id\times \tilde{u}}\\
\mathcal{C}(y_0)\ar[r]^-{\rho}&\mathcal{X}(y_0)\ar[r]^-{\varphi}&
Y\times H^G_n(Y,y_0).
}
\end{equation*}
One has that $f_{1}\circ \rho_{1}=p_{T}$ and $\varphi \circ \rho = p$, so 
the composed diagram is Cartesian. The right square is Cartesian,
so the left square is Cartesian as well \cite[Proposition~4.16]{GW}. 
By Proposition~\ref{3.12}
$\rho$ induces an isomorphism 
$\overline{\rho}:\mathcal{C}(y_0)/H\overset{\sim}{\lto} \mathcal{X}(y_0)$. 
The formation of quotients by $H$ commutes  with 
base change \cite[Prop. A.7.1.3]{KM}, so $\rho_{1}$ induces an
isomorphism $\overline{\rho}_{1}:\mathcal{C}/H \to X_{1}$ over 
$Y\times T$. Furthermore $\rho_{1}$ transforms the section 
$\zeta_{T}:T\to \mathcal{C}$ into $\eta_{1}:T\to X_{1}$. In fact, 
for every $t\in T$ one has
\[
\rho_{1}(\zeta_{T}(t)) = \rho_{1}(\zeta(\tilde{u}(t)),t) = 
(\rho(\zeta(\tilde{u}(t))),t) = (\theta(\tilde{u}(t)),t) = \eta_{1}(t).
\]
By Proposition~\ref{3.48} we conclude that 
$(f_{1}:X_{1}\to Y\times T,\eta_{1})$ is equivalent to 
$(\Lambda \times^G \mathcal{C}\to Y\times T,\pi(\lambda_0,\zeta_{T}))$. This 
proves the proposition since 
$(f_{T}:X_{T}\to Y\times T,\eta_{T})$ is 
equivalent to $(f_{1}:X_{1}\to Y\times T,\eta_{1})$ as we saw above.
The last statement of the proposition follows from Diagram~\eqref{e3.50a}.
\end{proof}
\begin{block}\label{3.31}
Let $(f:X\to Y,x_0)$ be a pointed $(\Lambda,G)$-cover of $(Y,y_0)$ 
branched in $D=\{b_1,\ldots,b_n\}\subset Y\setminus \{y_{0}\}$ 
 associated with 
$\varepsilon :\Lambda \to f^{-1}(y_0)$,
$m:\pi_1(Y\setminus D,y_0)\to G$ as in Proposition~\ref{2.3}(i). Let 
$\gamma_{1},\ldots,\gamma_{n}$ be loops based at $y_{0}$ as in 
\S\ref{2.1} and let $g_{i}=m([\gamma_{i}])$, $i=1,\ldots,n$. 
Varying $\overline{U}_{1},\ldots,\overline{U}_{n}$ and 
$\eta_{1},\ldots,\eta_{n}$ one obtains 
$\gamma_{1}',\ldots,\gamma_{n}'$ and $g'_{i}=m([\gamma'_{i}])$ such that 
$g'_{i}$ belongs to the conjugacy class of $g_{i}$ in $G$,
$i=1,\ldots,n$ (cf. \S\ref{2.2}). Furthermore replacing 
$(f:X\to Y,x_0)$ by an equivalent pointed $(\Lambda,G)$-cover 
$(f_{1}:X_{1}\to Y,x'_0)$ of $(Y,y_0)$ results in replacing 
$(g'_{1},\ldots,g'_{n})$ by 
$(\sigma g'_{1}\sigma^{-1},\ldots,\sigma g'_{n}\sigma^{-1})$, where 
$\sigma \in N(\lambda_0)$. 
\end{block}
\begin{definition}\label{3.32}
Let $O_{1},\ldots,O_{k}$ be conjugacy classes of $G$, $O_{i}\neq  O_{j}$  if 
$i\ne j$. Let $\underline{n}=n_1O_1+\cdots+n_k O_k$ be a formal sum, where 
$n_{i}\in \mathbb{N}$. Let  $|\underline{n}|=n_{1}+\cdots+n_{k}=n$.
We  say 
that a pointed cover $(f:X\to Y,x_0)$ of $(Y,y_0)$ 
branched in $n$  points  is a $(\Lambda,G)$-cover of 
branching type $\underline{n}$ if there exists a 
bijection  $\varepsilon  :\Lambda  \to  f^{-1}(y_0)$  and
an epimorphism $m:\pi_1(Y\setminus D,y_0)\to G$,  which satisfy
Condition~(i) of Proposition~\ref{2.3}, such that 
\begin{equation}\label{e3.32}
n_{i}\quad \text{of the elements}\quad m([\gamma_{j}])\quad 
\text{belong to}\quad O_{i}\quad \text{for}\quad i=1,\ldots,k.
\end{equation}
\end{definition}
Notice that the branching type is not uniquely determined 
by the equivalence class of $(f:X\to Y,x_0)$. It specifies that 
$(f:X\to Y,x_0)$ is equivalent to \linebreak
$(\Lambda \times^{G}C\to Y,\pi(\lambda_0,z_{0}))$ for some 
pointed $G$-cover $(p:C\to Y,z_0)$ of $(Y,y_0)$ of branching type $\underline{n}$.
\begin{block}\label{3.32a}
Let $H^G_{\underline{n}}(Y,y_0)$ be the subset of $H^G_n(Y,y_0)$  consisting 
of the elements $(D,m)$ with $m$ satisfying Condition~(\ref{e3.32}).
Every nonempty $H^G_{\underline{n}}(Y,y_0)$ is a union 
of connected components of $H^G_n(Y,y_0)$ and 
$H^G_n(Y,y_0) = \bigsqcup_{|\underline{n}|=n} H^G_{\underline{n}}(Y,y_0)$.
Let $\sigma \in N(\lambda_0)$. Then 
$\sigma \ast H^G_{\underline{n}}(Y,y_0) = 
H^G_{\underline{n}'}(Y,y_0)$, with 
$\underline{n}' = n_1O'_1+\cdots+n_k O'_k$, where $O_{i}'$ is the conjugacy class  
$\sigma O_{i} \sigma^{-1}$ of $G$. Suppose 
$H^G_{\underline{n}}(Y,y_0) \neq \emptyset$. Let 
$H^{\Lambda,G}_{\underline{n},\lambda_0}(Y,y_0) = 
\nu(H^G_{\underline{n}}(Y,y_0))$ (cf. Proposition~\ref{3.12a}). 
It is a union of connected components of 
$H^{\Lambda,G}_{n,\lambda_0}(Y,y_0)$. Let us denote by
$\phi_{\underline{n}}:\mathcal{X}_{\underline{n}}(y_0,\lambda_0)\to 
Y\times H^{\Lambda,G}_{\underline{n},\lambda_0}(Y,y_0)$ the restriction of 
the family $\phi: \mathcal{X}(y_0,\lambda_0)\to
Y\times H^{\Lambda,G}_{n,\lambda_0}(Y,y_0)$ and let 
$\xi_{\underline{n}} : 
H^{\Lambda,G}_{\underline{n},\lambda_0}(Y,y_0)
\to \mathcal{X}_{\underline{n}}(y_0,\lambda_0)$ be the restriction of the 
morphism $\xi:H^{\Lambda,G}_{n,\lambda_0}(Y,y_0)\to
\mathcal{X}(y_0,\lambda_0)$.
\end{block}
\begin{theorem}\label{3.34}
Let $\underline{n}=n_1O_1+\cdots+n_k O_k$, $|\underline{n}|=n$, be as in 
Definition~\ref{3.32}.
Let $(f:X\to Y\times S,\eta:S\to X)$ be a smooth, proper family of 
pointed covers of $(Y,y_0)$ branched in $n$ points. Suppose that 
$S$ is connected and there is a point $s_0\in S$ such that 
$(f_{s_{0}}:X_{s_{0}}\to Y,\eta(s_{0}))$ is a $(\Lambda,G)$-cover 
of branching type $\underline{n}$. Then
\begin{enumerate}
\item
there exists a unique morphism 
$u:S\to H^{\Lambda,G}_{\underline{n},\lambda_0}(Y,y_0)$
such that 
$(f:X\to Y\times S,\eta)$ is equivalent to the pullback
by $u$ of $(\phi_{\underline{n}}:\mathcal{X}_{\underline{n}}(y_0,\lambda_0)\to \linebreak
Y\times H^{\Lambda,G}_{\underline{n},\lambda_0}(Y,y_0),\xi_{\underline{n}})$;
\item
there exists an \'{e}tale cover $\mu:T\to S$ and a smooth, proper family of pointed 
$G$-covers $(p:\mathcal{C}\to Y\times T,\zeta:T\to \mathcal{C})$ of $(Y,y_0)$ 
of branching type $\underline{n}$, such that the pullback by $\mu$,
$(f_T:X_T\to Y\times T,\eta_T)$ is equivalent to 
$(\Lambda \times^{G}\mathcal{C}\to Y\times T,\pi(\lambda_0,\zeta_T))$
(cf. Proposition~\ref{3.48}).
\end{enumerate}
\end{theorem}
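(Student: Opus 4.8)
The plan is to deduce both statements from Theorem~\ref{3.25} and Proposition~\ref{3.49}, the only extra ingredient being that the branching type is a locally constant datum, so that it singles out a union of connected components of the relevant Hurwitz spaces.

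For (i): since $S$ is connected and the fiber $(f_{s_0}:X_{s_0}\to Y,\eta(s_0))$ is a $(\Lambda,G)$-cover, Proposition~\ref{2.20} shows that $(f:X\to Y\times S,\eta)$ is a smooth, proper family of pointed $(\Lambda,G)$-covers of $(Y,y_0)$. By Theorem~\ref{3.25} there is a unique morphism $u:S\to H^{\Lambda,G}_{n,\lambda_0}(Y,y_0)$, $u(s)=(B_s,m_s^{N(\lambda_0)})$, such that $(f,\eta)$ is equivalent to the pullback by $u$ of the universal family \eqref{e3.25}. The fiber over $s_0$ has branching type $\underline{n}$, so its monodromy invariant $u(s_0)$ lies in $H^{\Lambda,G}_{\underline{n},\lambda_0}(Y,y_0)=\nu(H^G_{\underline{n}}(Y,y_0))$. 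By \S\ref{3.32a} this is a union of connected components of $H^{\Lambda,G}_{n,\lambda_0}(Y,y_0)$, hence open and closed; since $u(S)$ is connected and meets it, $u$ factors through $H^{\Lambda,G}_{\underline{n},\lambda_0}(Y,y_0)$. Restricting \eqref{e3.25} to this open and closed subvariety gives $(\phi_{\underline{n}},\xi_{\underline{n}})$, and $(f,\eta)$ is equivalent to its pullback by $u$; uniqueness is inherited from Theorem~\ref{3.25}.

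For (ii): by Proposition~\ref{3.49} there is an \'{e}tale Galois cover $\mu:T_0\to S$ with group $N(\lambda_0)$, fitting in the Cartesian square \eqref{e3.50a} via a morphism $\tilde{u}:T_0\to H^G_n(Y,y_0)$ with $\nu\circ\tilde u=u\circ\mu$, and a smooth, proper family of pointed $G$-covers $(p:\mathcal{C}_0\to Y\times T_0,\zeta_0)$, the pullback of the universal $G$-cover by $\tilde u$, such that $(f_{T_0},\eta_{T_0})$ is equivalent to $(\Lambda\times^G\mathcal{C}_0,\pi(\lambda_0,\zeta_0))$. The fiber of $\mathcal{C}_0$ over $t\in T_0$ has monodromy invariant $\tilde u(t)$, hence branching type $\underline{n}$ precisely when $\tilde u(t)\in H^G_{\underline{n}}(Y,y_0)$. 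Put $T=\tilde u^{-1}(H^G_{\underline{n}}(Y,y_0))$. Since $H^G_{\underline{n}}(Y,y_0)$ is a union of connected components of $H^G_n(Y,y_0)$ (\S\ref{3.32a}), the set $T$ is open and closed in $T_0$, so $\mu|_T:T\to S$ is \'{e}tale, and $\mathcal{C}:=\mathcal{C}_0|_T$, $\zeta:=\zeta_0|_T$ form a family of pointed $G$-covers of branching type $\underline{n}$; restricting the equivalence over $T_0$ shows $(f_T,\eta_T)$ is equivalent to $(\Lambda\times^G\mathcal{C},\pi(\lambda_0,\zeta))$.

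The one point requiring care is that $\mu|_T:T\to S$ is surjective, so that $T$ is genuinely an \'{e}tale cover of $S$. Fix $s\in S$. By \eqref{e3.50a} the map $\tilde u$ restricts to a bijection $\mu^{-1}(s)\overset{\sim}{\lto}\nu^{-1}(u(s))$. As $u(S)\subseteq H^{\Lambda,G}_{\underline{n},\lambda_0}(Y,y_0)=\nu(H^G_{\underline{n}}(Y,y_0))$ by part (i), the point $u(s)$ is the image under $\nu$ of some $(D,m)\in H^G_{\underline{n}}(Y,y_0)$; thus $(D,m)\in\nu^{-1}(u(s))\cap H^G_{\underline{n}}(Y,y_0)$, so the corresponding point of $\mu^{-1}(s)$ lies in $T$, whence $\mu|_T$ is surjective. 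I expect this surjectivity, rather than any deeper geometry, to be the delicate step, because the Galois cover furnished by Proposition~\ref{3.49} carries $G$-covers of every branching type in the $N(\lambda_0)$-orbit of $\underline{n}$, and one must discard the components of the wrong type while retaining a cover of all of $S$.
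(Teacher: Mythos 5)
Your proof is correct. For part (i) you follow exactly the paper's route: Proposition~\ref{2.20}, then Proposition~\ref{3.23} and Theorem~\ref{3.25}, combined with the fact from \S\ref{3.32a} that $H^{\Lambda,G}_{\underline{n},\lambda_0}(Y,y_0)$ is a union of connected components, so the connected set $u(S)$, which meets it at $u(s_0)$, lies inside it. For part (ii) your execution differs from the paper's: the paper re-runs the proof of Proposition~\ref{3.49} after replacing the Cartesian square \eqref{e3.50a} by \eqref{e3.56} (the fiber product of $S$ with $H^G_{\underline{n}}(Y,y_0)$ over $H^{\Lambda,G}_{\underline{n},\lambda_0}(Y,y_0)$) and using the universal family of pointed $G$-covers of branching type $\underline{n}$ from \cite[Theorem~5.8]{K7}, whereas you apply Proposition~\ref{3.49} as a black box and then pass to $T=\tilde{u}^{-1}(H^G_{\underline{n}}(Y,y_0))$, open and closed in the Galois cover $T_0$. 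The two constructions yield literally the same data: the paper's fiber product in \eqref{e3.56} is exactly your $\tilde{u}^{-1}(H^G_{\underline{n}}(Y,y_0))\subset T_0$, with the same restricted family $\mathcal{C}$. What your route buys is economy: there is no need to repeat the chain of Cartesian diagrams of Proposition~\ref{3.49} or to invoke the branching-type universal $G$-family. What it costs is the surjectivity of $\mu|_T$, which you correctly supply via the bijection $\mu^{-1}(s)\cong\nu^{-1}(u(s))$ together with part (i), and which in the paper's fiber-product formulation is automatic, since $\nu:H^G_{\underline{n}}(Y,y_0)\to H^{\Lambda,G}_{\underline{n},\lambda_0}(Y,y_0)$ is surjective by definition and surjectivity is stable under base change. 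Your closing remark also explains correctly why the theorem asserts only an \'{e}tale cover rather than a Galois one: by \S\ref{3.32a} the group $N(\lambda_0)$ permutes the types $\underline{n}'=n_1(\sigma O_1\sigma^{-1})+\cdots+n_k(\sigma O_k\sigma^{-1})$, so $T$ need not be $N(\lambda_0)$-stable in $T_0$.
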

\begin{proof}
By Proposition~\ref{2.20} $(f:X\to Y\times S,\eta)$ is a smooth, proper family 
of pointed $(\Lambda,G)$-covers 
of $(Y,y_0)$. Part~(i) follows from Theorem~\ref{3.25} since the morphism 
$u:S\to H^{\Lambda,G}_{n,\lambda_0}(Y,y_0)$ of Proposition~\ref{3.23} has image contained in a connected 
component of $H^{\Lambda,G}_{n,\lambda_0}(Y,y_0)$ which is a connected component of 
$H^{\Lambda,G}_{\underline{n},\lambda_0}(Y,y_0)$.
\par
Part~(ii) is proved similarly to Proposition~\ref{3.49} replacing \eqref{e3.50a} by the Cartesian diagram 
\begin{equation}\label{e3.56}
\xymatrix{
T\ar[r]^-{\tilde{u}}\ar[d]_-{\mu}&H^G_{\underline{n}}(Y,y_0)\ar[d]^-{\nu}\\
S\ar[r]^-{u}&H^{\Lambda,G}_{\underline{n},\lambda_0}(Y,y_0)
}
\end{equation}
and using the universal family of pointed $G$-covers of $(Y,y_0)$ of branching type 
$\underline{n}$, $(p_{\underline{n}}:\mathcal{C}_{\underline{n}}(y_0)\to
Y\times H^G_{\underline{n}}(Y,y_0),\zeta_{\underline{n}}$) (cf. \cite[Theorem~5.8]{K7}).
\end{proof}
Choosing another $\lambda_{1}\in \Lambda$ as a marked element  one obtains a family 
of pointed $(\Lambda,G)$-covers of $(Y,y_0)$ which is isomorphic to 
\eqref{e3.19}.
\begin{proposition}\label{3.41}
Let $\lambda_{1}\in \Lambda$ and let $\lambda_{1}=\lambda_{0} g$, 
$g\in G$. Let 
\begin{equation}\label{e3.41}
(\phi_1:\mathcal{X}(y_0,\lambda_1)\to 
Y\times H^{\Lambda,G}_{n,\lambda_1}(Y,y_0),
\xi_1:H^{\Lambda,G}_{n,\lambda_1}(Y,y_0)\to \mathcal{X}(y_0,\lambda_1))
\end{equation}
be the smooth, proper family of pointed $(\Lambda,G)$-covers of  $(Y,y_0)$  branched 
in $n$ points associated with $\lambda_{1}\in \Lambda$ 
(cf. Theorem~\ref{3.19}). Then there exists an isomorphism 
$h:\mathcal{X}(y_0,\lambda_1)\to \mathcal{X}(y_0,\lambda_0)$ which 
fits in the following diagram:
\begin{equation}\label{e3.41a}
\xymatrix{
\mathcal{X}(y_0,\lambda_1)\ar[r]^-{h}\ar[d]_-{\phi_1}&
\mathcal{X}(y_0,\lambda_0)\ar[d]^-{\phi}\\
Y\times H^{\Lambda,G}_{n,\lambda_1}(Y,y_0)\ar[r]^-{id\times u}&
Y\times H^{\Lambda,G}_{n,\lambda_0}(Y,y_0),
}
\end{equation}
where $u:H^{\Lambda,G}_{n,\lambda_1}(Y,y_0)
\to H^{\Lambda,G}_{n,\lambda_0}(Y,y_0)$ is an isomorphism given by 
$u(D,m_{1}^{N(\lambda_1)})=(D,(gm_{1}g^{-1})^{N(\lambda_0)})$ and 
furthermore $\xi \circ u = h\circ \xi_{1}$. Similar statements hold 
for the universal families of pointed $(\Lambda,G)$-covers of $(Y,y_0)$ with 
a fixed branching type $\underline{n}=n_{1}O_{1}+\cdots +n_{k}O_{k}$.
\end{proposition}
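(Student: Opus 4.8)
The plan is to deduce the result from the universal property of the $\lambda_0$-family established in Theorem~\ref{3.25}, rather than to construct $h$ by hand. The key observation is that the total space $\mathcal{X}(y_0)=\Lambda\times^{G}\mathcal{C}(y_0)$ over $H^G_n(Y,y_0)$ is intrinsic and independent of the marked element; only the section and the group $N(\lambda_{i})$ by which one passes to the quotient depend on it. Thus both $\mathcal{X}(y_0,\lambda_0)=\mathcal{X}(y_0)/N(\lambda_0)$ and $\mathcal{X}(y_0,\lambda_1)=\mathcal{X}(y_0)/N(\lambda_1)$ are quotients of one and the same variety, and it suffices to produce the morphism classifying the $\lambda_1$-family with respect to the $\lambda_0$-universal family and to identify it with the map $u$ in the statement.

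First I would treat $u$. Since $g\in G\subset N_{S(\Lambda)}(G)$, conjugation $\sigma\mapsto g\sigma g^{-1}$ is a group isomorphism $N(\lambda_1)\overset{\sim}{\lto}N(\lambda_0)$: indeed $\lambda_{1}\sigma=\lambda_{1}$ is equivalent to $\lambda_{0}(g\sigma g^{-1})=\lambda_{0}$, because $\lambda_1=\lambda_0 g$ and the action is on the right. By Proposition~\ref{3.12a} the element $g$ acts on $H^G_n(Y,y_0)$ by the covering automorphism $\tilde g\colon(D,m)\mapsto(D,gmg^{-1})$, and $\tilde g$ carries the $N(\lambda_1)$-orbit of $(D,m)$ onto the $N(\lambda_0)$-orbit of $(D,gmg^{-1})$, since $g\sigma m\sigma^{-1}g^{-1}=(g\sigma g^{-1})(gmg^{-1})(g\sigma g^{-1})^{-1}$ and $g\sigma g^{-1}$ ranges over $N(\lambda_0)$ as $\sigma$ ranges over $N(\lambda_1)$. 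Hence $\nu_{0}\circ\tilde g$ is $N(\lambda_1)$-invariant and factors through the étale quotient morphisms $\nu_{i}\colon H^G_n(Y,y_0)\to H^{\Lambda,G}_{n,\lambda_i}(Y,y_0)$, giving a morphism $u$ with $u\circ\nu_{1}=\nu_{0}\circ\tilde g$ and the stated formula $u(D,m_{1}^{N(\lambda_1)})=(D,(gm_1g^{-1})^{N(\lambda_0)})$; the automorphism $\widetilde{g^{-1}}$ induces its inverse, so $u$ is an isomorphism.

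Next I would apply the moduli property. By Theorem~\ref{3.19} applied to $\lambda_1$, the pair $(\phi_1,\xi_1)$ of \eqref{e3.41} is a smooth, proper family of pointed $(\Lambda,G)$-covers of $(Y,y_0)$, so by Theorem~\ref{3.25} there is a \emph{unique} morphism $w\colon H^{\Lambda,G}_{n,\lambda_1}(Y,y_0)\to H^{\Lambda,G}_{n,\lambda_0}(Y,y_0)$ such that $(\phi_1,\xi_1)$ is equivalent to the pullback of the $\lambda_0$-universal family by $w$; moreover $w$ sends each point to the $\lambda_0$-monodromy invariant of the corresponding fibre (Proposition~\ref{3.23}). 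The fibre of $\phi_1$ over $(D,m_1^{N(\lambda_1)})$ is a pointed cover with marked point $x_0=\xi_1(D,m_1^{N(\lambda_1)})$ admitting a pair $(\varepsilon_1,m_1)$ with $\varepsilon_1(\lambda_1)=x_0$. To read off its $\lambda_0$-invariant I would replace $(\varepsilon_1,m_1)$ by $(\varepsilon,m)$ with $\varepsilon(\lambda)=\varepsilon_1(\lambda g)$ and $m=gm_1g^{-1}$, exactly as in the proof of Proposition~\ref{2.14b}; then $\varepsilon(\lambda_0)=\varepsilon_1(\lambda_0 g)=\varepsilon_1(\lambda_1)=x_0$ and $\varepsilon(\lambda m([\alpha]))=\varepsilon(\lambda)\alpha$, so the $\lambda_0$-invariant equals $(D,(gm_1g^{-1})^{N(\lambda_0)})=u(D,m_1^{N(\lambda_1)})$. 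Therefore $w=u$.

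Finally, since $(\phi_1,\xi_1)$ is equivalent to the pullback of $(\phi,\xi)$ by the isomorphism $u$, base change along $\mathrm{id}_Y\times u$ identifies the pullback total space with $\mathcal{X}(y_0,\lambda_0)$; composing the equivalence with this projection yields the desired isomorphism $h\colon\mathcal{X}(y_0,\lambda_1)\to\mathcal{X}(y_0,\lambda_0)$ fitting into Diagram~\eqref{e3.41a}, with $\phi\circ h=(\mathrm{id}\times u)\circ\phi_1$. The section of the pullback over $s$ is $(\xi(u(s)),s)$, which the projection carries to $\xi(u(s))$; as the equivalence matches $\xi_1$ with this section, the relation $h\circ\xi_1=\xi\circ u$ follows. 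The main technical point is the fibrewise change-of-marked-element computation identifying $w$ with $u$; everything else is formal. For the branching-type refinement one only notes that $g\in G$ preserves every $G$-conjugacy class, so $\tilde g$ preserves each $H^G_{\underline n}(Y,y_0)$ (cf. \S\ref{3.32a}), whence $u$ restricts to an isomorphism $H^{\Lambda,G}_{\underline n,\lambda_1}(Y,y_0)\overset{\sim}{\lto}H^{\Lambda,G}_{\underline n,\lambda_0}(Y,y_0)$, and the same argument, now using the branching-type universal family, gives the corresponding isomorphism of total spaces.
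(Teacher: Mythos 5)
Your proposal is correct, and its core coincides with the paper's proof: both hinge on the change-of-marked-element computation $\varepsilon(\lambda)=\varepsilon_{1}(\lambda g)$, $m=gm_{1}g^{-1}$ (as in Proposition~\ref{2.14b}) applied to the fibers of $\phi_{1}$, and both then invoke Proposition~\ref{3.23} and Theorem~\ref{3.25} to obtain the classifying morphism and the Cartesian square with $h\circ \xi_{1}=\xi \circ u$. Where you genuinely differ is in how invertibility is established. The paper does not construct $u$ by descent: it gets $u$ as a morphism purely from Proposition~\ref{3.23}, and then proves that $u$ and $h$ are isomorphisms by symmetry, repeating the whole construction with the roles of $\lambda_{0}$ and $\lambda_{1}$ exchanged (using $\lambda_{0}=\lambda_{1}g^{-1}$) to produce $u_{1}$ and $h_{1}$, and concluding $h_{1}\circ h=\mathrm{id}$, $h\circ h_{1}=\mathrm{id}$ from the uniqueness of section-preserving equivalences (e.g. $h_{1}\circ h\circ \xi_{1}=\xi_{1}$). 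You instead descend the covering automorphism $(D,m)\mapsto (D,gmg^{-1})$ of $H^G_n(Y,y_0)$ (Proposition~\ref{3.12a}) through the two \'{e}tale quotients $\nu_{1},\nu_{0}$, using the conjugation isomorphism $N(\lambda_{1})\to N(\lambda_{0})$, which yields $u$ together with its inverse at the level of Hurwitz spaces; then $h$ is automatically an isomorphism, being the composition of an equivalence with base change along an isomorphism. Your route makes the group-theoretic origin of $u$ explicit and dispenses with the symmetry argument; the paper's route avoids checking any quotient/descent details and handles $u$ and $h$ uniformly. Your remark on the branching-type case (conjugation by $g\in G$ fixes each class $O_{i}$, hence each $H^G_{\underline{n}}(Y,y_0)$, cf.~\S\ref{3.32a}) correctly fills in what the paper leaves implicit in its closing sentence.
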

\begin{proof}
Let $(f:X\to Y,x_0)$ be a pointed $(\Lambda,G)$-cover of $(Y,y_0)$ 
branched in $D\in (Y\setminus y_0)^{(n)}_{\ast}$ and let 
$(\varepsilon_{1} :\Lambda \to f^{-1}(y_0),
m_{1}:\pi_1(Y\setminus D,y_0)\to G)$ be a pair such that 
$\varepsilon_{1} (\lambda m_{1}([\alpha])) = 
\varepsilon_{1} (\lambda )\alpha$ for $\forall \lambda \in \Lambda$ 
and $\forall [\alpha]\in \pi_1(Y\setminus D,y_0)$, and 
$\varepsilon_{1}(\lambda_{1})=x_{0}$. Let 
$\varepsilon :\Lambda \to f^{-1}(y_0)$ be the bijection defined by 
$\varepsilon(\lambda)=\varepsilon_{1}(\lambda g)$. Then 
$\varepsilon(\lambda_{0})=x_{0}$ and $m=gm_{1}g^{-1}$ satisfies 
$\varepsilon (\lambda m([\alpha])) = \varepsilon (\lambda )\alpha$ 
for $\forall \lambda \in \Lambda$ 
and $\forall [\alpha]\in \pi_1(Y\setminus D,y_0)$. Applying this to 
the fibers of \eqref{e3.41} we see that for 
$\forall  (D,m_{1}^{N(\lambda_1)})  \in  H^{\Lambda,G}_{n,\lambda_1}(Y,y_0)$ 
the monodromy invariant relative to $\lambda_{0}$ of 
$(\mathcal{X}(y_0,\lambda_1)_{(D,m_{1}^{N(\lambda_1)})}
\to Y, \xi_{1}(D,m_{1}^{N(\lambda_1)})$ is 
$(D,(gm_1g^{-1})^{N(\lambda_0)})$, so by Proposition~\ref{3.23} $u$ is 
a morphism. By Theorem~\ref{3.25} there exists a morphism 
$h:\mathcal{X}(y_0,\lambda_1)\to \mathcal{X}(y_0,\lambda_0)$ which makes 
the the Diagram~\eqref{e3.41a} Cartesian and satisfies 
$h\circ \xi_{1} = \xi \circ u$. Replacing $\lambda_{0}$ with $\lambda_{1}$, 
$\lambda_{0}=\lambda_{1} g^{-1}$, one obtains a morphism 
$u_{1}:H^{\Lambda,G}_{n,\lambda_0}(Y,y_0)\to
H^{\Lambda,G}_{n,\lambda_1}(Y,y_0)$ defined by 
$u_{1}(D,m^{N(\lambda_0)})=(D,(g^{-1}mg)^{N(\lambda_1)})$, inverse 
to $u$ and a morphism 
$h_{1}:\mathcal{X}(y_0,\lambda_0)\to \mathcal{X}(y_0,\lambda_1)$. 
One has $h_{1}\circ h = id_{\mathcal{X}(y_0,\lambda_1)}$ since 
$h_{1}\circ h \circ \xi_{1} = \xi_{1}$ and similarly 
$h\circ h_{1} = id_{\mathcal{X}(y_0,\lambda_0)}$.
\end{proof}

\end{document}